\documentclass[review,12pt]{article}
\usepackage[top=1in, bottom=1in, left=1in, right=1in]{geometry}
\usepackage{lipsum}
\usepackage{amsfonts}
\usepackage{graphicx}
\usepackage{tabularx}
\usepackage{epstopdf}
\usepackage{algorithmic}
\usepackage{xcolor}
\usepackage{mathtools}
\usepackage{amssymb,amsthm}

\usepackage{subcaption}     % Required for creating subfigures
\usepackage[numbers]{natbib}
\usepackage[colorlinks,
            linkcolor=blue,
            anchorcolor=blue,
            citecolor=blue
            ]{hyperref}

\usepackage{lineno}    % 核心行号宏包
\usepackage{blindtext} % 仅用于生成示例文本，实际使用时可删除

\ifpdf
  \DeclareGraphicsExtensions{.eps,.pdf,.png,.jpg}
\else
  \DeclareGraphicsExtensions{.eps}
\fi

\numberwithin{equation}{section}
\theoremstyle{plain} 
\newtheorem{theorem}{Theorem}[section]
\newtheorem{corollary}[theorem]{Corollary}
\newtheorem{lemma}[theorem]{Lemma}
\newtheorem{assumption}[theorem]{Assumption}

\theoremstyle{remark}
\newtheorem{remark}[theorem]{Remark}

\newcommand{\ii}{\mathrm{i}}

\newcommand{\real}{\mathbb{R}}

\newcommand{\natu}{\mathbb{N}}

\newcommand{\bsb}{\boldsymbol{b}}

\newcommand{\bsalpha}{\boldsymbol{\alpha}}

\newcommand{\bsk}{\boldsymbol{k}}

\newcommand{\bsu}{\boldsymbol{u}}

\newcommand{\bsx}{\boldsymbol{x}}
\newcommand{\bsy}{\boldsymbol{y}}

\newcommand{\bszero}{\boldsymbol{0}}
\newcommand{\bsone}{\boldsymbol{1}}
\newcommand{\bsell}{{\boldsymbol{\ell}}}

\newcommand{\bsgamma}{\boldsymbol{\gamma}}

\newcommand{\rd}{\,\mathrm{d}}
\newcommand{\mrd}{\mathrm{d}}

\newcommand{\e}{\mathbb{E}}

\newcommand{\walk}{\mathrm{wal}_k}
\newcommand{\walkj}{\mathrm{wal}_{k_j}}
\newcommand{\walbsk}{\mathrm{wal}_{\bsk}}

\newcommand{\inner}[2]{\langle #1, #2 \rangle_{H^{-1}(D),H_0^1(D)}}

\newcommand{\cp}{\mathcal{P}}

\newcommand{\newnorm}[1]{\Vert #1\Vert}
\newcommand{\supp}{\boldsymbol{s}}

\newcommand{\mix}{\mathrm{mix}}
\newcommand{\itr}{\mathbb{I}}

\usepackage{amsopn}

\begin{document}

\title{Uncertainty quantification using importance-sampled quasi-Monte Carlo with dimension-independent convergence rates
\thanks{The work of the third author was funded by the Guangdong Basic and Applied Basic Research Foundation grant  2024A1515011876 and 2025A1515011888.}}

% Authors: full names plus addresses.
\author{Zexin Pan\thanks{Institute of Fundamental and Transdisciplinary Research, Zhejiang University, Zhejiang 310058, People's Repulic of China
  (zep002@zju.edu.cn).} \and Du Ouyang\thanks{Department of Mathematical Sciences, Tsinghua University, Beijing 100084, People's Republic of China (oyd21@mails.tsinghua.edu.cn).} \and Zhijian He\thanks{Corresponding author. School of Mathematics, South China University of Technology, Guangzhou 510641, People's Repulic of China (hezhijian@scut.edu.cn).}
  }
\maketitle
% REQUIRED
\begin{abstract}
Quasi-Monte Carlo (QMC) integration over unbounded domains $\mathbb{R}^s$ remains challenging due to the high dimensionality of sampling space and the boundary growth of the integrand. In applications such as uncertainty quantification (UQ), the dimension $s$ can reach hundreds or even thousands. To restore the efficiency of quadrature rules in high dimensions, constructive QMC methods like lattice rules have been successfully developed within the framework of weighted function spaces. In contrast to designing problem-specific quadrature points, this paper proposes transforming the underlying integrand to accommodate the off-the-shelf scrambled nets (a construction-free randomized QMC method) via the
boundary-damping importance sampling (BDIS) proposed by \cite{pan:2025}.
We provide a rigorous analysis of the dimension-independent convergence rate of BDIS-based scrambled nets while covering a broader class of unbounded functions than that in \cite{pan:2025}. By exploiting the dimension structure of the parametric input random field, the proposed $n$-point quadrature rule achieves a dimension-independent mean squared error rate of $O(n^{-1-\alpha^*+\varepsilon})$ on standard UQ problems in elliptic partial differential equations (PDEs), where $\varepsilon>0$ is arbitrarily small and $\alpha^*\in (0,1)$ reflects the regularity with respect to the parametric variables. Numerical experiments on elliptic PDEs with high-dimensional parameters further demonstrate the effectiveness of the method.

\smallskip

\noindent \textbf{Key words:}
Quasi-Monte Carlo, importance sampling, uncertainty quantification, parametric PDE

\noindent \textbf{MSC codes:}
41A63, 65D30, 97N40
\end{abstract}

\section{Introduction}\label{sec:intro}
In this paper, we study quasi-Monte Carlo (QMC) quadrature rules for computing integrals over unbounded domain $\real^s$ of the form
\begin{equation}
\mu=\int_{\real^s} f(\bsx) \prod_{j=1}^s \varphi(x_j)\rd \bsx,\label{eq:mu}
\end{equation}
where $f:\real^s \to \real$ is a real-valued integrand and $\varphi:\real\to \real$ is a probability density function.
Such a fundamental problem roots in many fields,  such as  financial engineering \cite{glas:2004,lecuyer:2009,zhang:2021} and uncertainty quantification (UQ) \cite{graham:2015,kaarnioja:2022,kuo:nuye:2016,schillings:2020}. Although QMC has the potential to accelerate the convergence rate of Monte Carlo (MC), 
its application to high-dimensional functions is still a challenging task because of the often occurring curse of dimensionality. In last decades, researchers made great efforts to understand how high dimensionality could be tackled successfully by QMC methods. On the one hand, the concept of ``low effective dimension'' plays an important role to explain the success of QMC in applications with high dimensions, meaning that high-dimensional functions may in fact depend mainly on a few leading variables or the interaction of a small number of variables \cite{cafmowen,effdimsobononper,wang:fang:2003}. On the other hand, weighted function spaces  were introduced by \cite{sloan:1998} for handling high-dimensional problems, in which a set of parameters (namely ``weights") are built into the function space norm to model the relative importance between different subsets of variables.
Under certain conditions on the weights, a class of constructive QMC methods known as lattice rules  with component-by-component (CBC) construction achieves a dimension-independent error bound \cite{nichols_2014_fast} and has proven successful in many UQ applications, including elliptic  partial differential equations (PDEs) with random diffusion coefficients \cite{graham:2015,guth:2025, kuo:nuye:2016} and elliptic inverse problems \cite{herrmann:2021,scheichl:2017}. 
The philosophy behind the applications is to choose the weights according to the dimension structure of the underlying integrands and then construct lattice rules via CBC \cite{sloan:2002} or fast CBC  \cite{nuyens:2006} that are amenable to high-dimensional problems. It is crucial to learn the dimension structure of the problems at hand. Rather than constructing quadrature points via CBC, our strategy is to transform the underlying integrands via dimension information-based importance sampling (IS), which can then be easily handled by off-the-shelf QMC methods such as scrambled nets \cite{rtms,owen96}. This paper builds upon the boundary-damping IS recently proposed by the present authors \cite{pan:2025}. We first establish an improved bound on the dimension-independent convergence rate of scrambled net integration over $\real^s$ under the boundary-damping IS, and then generalize the results to elliptic
PDEs with general random diffusion coefficients and weighted function spaces.

Throughout this paper, we use $\bsx$ for coordinates in $\real^s$ and $\bsu$ for coordinates in $\itr^s$, where $\itr=(0,1)$.  
To estimate $\mu$ in \eqref{eq:mu}, we consider IS of the form
\begin{equation}\label{eqn:muhat}
 \hat{\mu}=\frac{1}{n}\sum_{i=0}^{n-1} w(\bsu_i)  f\circ T(\bsu_i),   
\end{equation}
where $T(\bsu):\itr^s\to \real^s$ is called the transport map, $\{\bsu_0,\dots,\bsu_{n-1}\} \subseteq \itr^s$ is a set of quadrature points, and $w(\cdot)$ is known as likelihood ratio or  \textit{weight function}. The transport map allows us to generate the proposal via standard uniform distribution tailored to QMC quadrature rules. In this paper, we focus on using scrambled digital nets as the quadrature points following the setting of randomized QMC (RQMC) for ease of error estimation \cite{vLEC02a,owen:2013}.

Pan et al.  \cite{pan:2025} considered transport maps with independent components, i.e., $T(\bsu)=(T_1(u_1),\dots, T_s(u_s))$. If the transport map $T_j:\itr\to \real$ is differentiable, the weight function then has the product form
$$w(\bsu)=\prod_{j=1}^s w_j(u_j)  \quad \text{for} \quad w_j(u)=T_j'(u)  \varphi\circ T_j(u) .$$
We thus have 
\begin{equation}\label{eqn:Tj}
    T_j(u)=\Phi^{-1}\Big(\int_{0}^u w_j(t)\rd t\Big),
\end{equation}
where $\Phi(x)=\int_{-\infty}^x \varphi (y) \rd y$ be the cumulative distribution function (CDF) and $\Phi^{-1}:\itr \to \real$ be the inverse CDF (quantile function). Rather than choosing IS proposals from a family of distributions as in the setting of MC \cite{glas:2004,owen:2013},  Pan et al. \cite{pan:2025} proposed to choose weights $w_j(\cdot)$ from a set of parametrized functions, i.e., $w_j(u)=w_{\theta_j}(u)$ with  $\theta_j\in (0,1/2]$. Given the parameter $\theta_j$, the transport map $T_j$ is determined by \eqref{eqn:Tj}.
For $\theta\in(0,1/2]$, Pan et al. \cite{pan:2025} particularly took
\begin{equation}\label{eqn:wtheta}
w_\theta(u)=\begin{cases}
    (1-\theta)^{-1}\eta(u/\theta), &\text{ if } u\in (0,\theta/2] \\
    (1-\theta)^{-1}(1-\eta(1-u/\theta)) , &\text{ if } u\in (\theta/2,\theta) \\
    (1-\theta)^{-1}, &\text{ if } u\in [\theta,1/2] \\
    w_\theta(1-u), &\text{ if } u\in (1/2,1), \\
\end{cases}  
\end{equation}
and
\begin{equation}\label{eq:tildeetap}
\eta(u)=\eta_p(u)=\begin{cases}
    2^{-p-2} u^{-p-1}\exp(2^p-u^{-p}), &\text{ if } u\in (0,1/2]\\
    0, &\text{ if } u=0
\end{cases} \text{ for } \ p\geq 1.
\end{equation}
We in addition let $T_j(u)=\Phi^{-1}(u)$ when $\theta_j=0$, which is the usual transport map used by inversion methods \cite{cui2025secondorderinterlacedpolynomial,dick2025quasi, nichols:2014, haltavoid, ye2025medianqmcmethodunbounded}.  As shown in \cite{pan:2025}, estimating $\mu$ is equivalent to integrating 
$$f^w(\bsu):=w(\bsu) f\circ T(\bsu)$$
over $\itr^s$. The weight function $w_\theta(u)$ damps the boundary growth of the integrand $f^w(\bsu)$, so that a faster convergence rate of RQMC can be achieved when the original integrand $f(\bsx)$ grows unboundedly as $\bsx$ goes to infinity, hence the name boundary-damping IS (BDIS).
The mean squared error (MSE) convergence rate of the BDIS method is also established when $f\in W^{1,q}_{\mix}(\real^s,\varphi)$ with $q=2$ or $f\in W^{1,q,\infty}_{\mix}(\real^s,\varphi)$ with $q\in (1,2]$, where a smaller $q$ indicates more severe boundary growth (see the definitions of function norms in Section~\ref{sec:main}). A limitation of the theory developed in \cite{pan:2025} is that the convergence rate does not improve for integrands in $W^{1,q}_{\mix}(\real^s,\varphi)$ or $W^{1,q,\infty}_{\mix}(\real^s,\varphi)$ with $q>2$, which limits its applicability in UQ problems. It is therefore of interest to investigate the performance of BDIS on integrands with mild boundary growth (i.e., large $q$) or even no significant boundary growth (i.e., arbitrarily large $q$).

The contribution of this paper is threefold. Firstly, we provide an improved upper bound on the dimension-independent convergence of the BDIS method for $f\in W^{1,q}_{\mix}(\real^s,\varphi)$ with any $q>1$, making our theory suitable for functions with mild boundary growth.  Secondly, we extend our analysis to PDEs with a general parametric input random field. Lattice rule-based QMC methods have proved effective in the context of elliptic PDEs with lognormal random diffusion coefficients \cite{graham:2015,herrmann:2019}.
More recently, Guth and Kaarnioja \cite{guth:2025} studied elliptic PDEs with Gevrey regular inputs \cite{chernov:2024}, where the parametric inputs follow a generalized $\beta$-Gaussian distribution, and analyzed the QMC integration error for assessing PDE response statistics using randomly shifted rank-1 lattice rules. Our analysis also covers such PDE problems, and shows the BDIS-based scrambled nets with $n$ points achieve a dimension-independent MSE rate of $O(n^{-1-\alpha^*+\varepsilon})$, where $\varepsilon>0$ is arbitrarily small and $\alpha^*\in (0,1)$ depends on the regularity with respect to the parametric variables. Thirdly, we generalize the convergence results to the setting of  weighted function spaces, thereby bridging our method with constructive QMC methods such as lattice rules \cite{graham:2015,guth:2025,kuo:nuye:2016}.

The rest of this paper is organized as follows. Section~\ref{sec:main} provides the main convergence results, improving those in \cite{pan:2025}. Section~\ref{sec:proofs} gives the proof of the main theorem in Section~\ref{sec:main}. Section~\ref{sec:pdes} shows how to apply our method to  parameterized elliptic PDEs. Section~\ref{sec:weightedSobolev} generalizes the convergence results using weighted Sobolev norms. Numerical results are presented in Section~\ref{sec:numer} to illustrate the effectiveness of our proposed method.  Section~\ref{sec:conclusion} concludes this paper.

\section{Main results}\label{sec:main}
We follow the notations in \cite{pan:2025}.  For a vector $\bsx\in\real^s$ and a subset $v\subseteq 1{:}s$, $\bsx_v$  denotes the subvector of $\bsx$ indexed by $v$, while $\bsx_{v^c}$ denotes the subvector indexed by $1{:}s\setminus v$. Let $\natu$ be the set of positive integers and $\natu_0 = \natu\cup\{0\}$. Let $\Vert\bsx\Vert_q=(\sum_{j=1}^s |x_j|^q)^{1/q}$ for $q>0$. For a nonempty set $v\subseteq 1{:}s$, define the mixed derivative by
$$\partial^v f(\bsx) = \left(\prod_{j\in v}\frac{\partial}{\partial x_j}\right)f(\bsx),$$
and define $\partial^\emptyset f(\bsx)=f(\bsx)$ by convention. All constants in this paper have an implicit dependency on $\varphi$ and we suppress it from the notation for simplicity.

This paper will use some function norms as defined in the following:
\begin{align*}
    \Vert f \Vert_{L^q(\itr^s)}&=\begin{cases}
        \Big(\int_{\itr^s} |f(\bsu)|^q \rd \bsu \Big)^{1/q}&0<q<\infty\\
        \sup_{u\in \itr^s}|f(\bsu)|&q=\infty
    \end{cases},\\
        \Vert f \Vert_{W^{1,q}_{\mix}(\itr^s)}&=\Big(\sum_{v\subseteq 1{:}s} \Vert \partial^v  f \Vert^q_{L^q(\itr^s)}\Big)^{1/q},\\
    \Vert f \Vert_{L^q(\real^s,\varphi)}&=\Big(\int_{\real^s} |f(\bsx)|^q  \prod_{j=1}^s \varphi(x_j) \rd\bsx\Big)^{1/q},\\
    \Vert f \Vert_{W^{1,q}_{\mix}(\real^s,\varphi)}&=\Big(\sum_{v\subseteq 1{:}s} \Vert \partial^v  f \Vert^q_{L^q(\real^s,\varphi)}\Big)^{1/q}.
\end{align*}
Pan et al.  \cite{pan:2025} additionally considered the first-order mixed $L^\infty$ norm:
\begin{align*}
    \Vert f \Vert_{L^{q,\infty}(\real^s,\varphi)}&=\Big(\sup_{\bsx\in \real^s} |f(\bsx)|^q  \prod_{j=1}^s \frac{\varphi(x_j)}{\varphi_\infty}  \Big)^{1/q},\\
    \Vert f \Vert_{W^{1,q,\infty}_{\mix}(\real^s,\varphi)}&=\Big(\sum_{v\subseteq 1{:}s} \Vert \partial^v  f \Vert^q_{L^{q,\infty}(\real^s,\varphi)}\Big)^{1/q},
\end{align*}
where $\varphi_\infty:=\sup_{x\in \real} \varphi(x)<\infty$. They studied the BDIS method for $f\in W^{1,q}_{\mix}(\real^s,\varphi)$ with $q=2$ or $f\in W^{1,q,\infty}_{\mix}(\real^s,\varphi)$ with $q\in (1,2]$. In the latter case, the function $f$ is more challenging for   QMC integration and may not belong to $L^2$ due to its rapid boundary growth. Lemma 2.2 of \cite{pan:2025} shows that $W^{1,q,\infty}_{\mix}(\real^s,\varphi)\subseteq W^{1,q'}_{\mix}(\real^s,\varphi)$ whenever $q>q'\geq 1$. To generalize these results, we study the case $f\in W^{1,q}_{\mix}(\real^s,\varphi)$ for any $q>1$.
We will need the following version of Sobolev imbedding theorem.

\begin{lemma}\label{lem:Sobolevimbedding}
    For $f\in W^{1,1}_{\mix}(\itr^s)$, $\Vert f \Vert_{L^\infty(\itr^s)}\leq \Vert f \Vert_{W^{1,1}_{\mix}(\itr^s)}.$
\end{lemma}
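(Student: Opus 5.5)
The plan is to prove the bound first for smooth $f$ (say $f\in C^\infty(\overline{\itr}^s)$, or at least with continuous mixed first derivatives up to the boundary) and then pass to general $f\in W^{1,1}_{\mix}(\itr^s)$ by density. The key tool is an averaged one-dimensional fundamental theorem of calculus, applied coordinate by coordinate.

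\textbf{One dimension.} For $g\in C^1[0,1]$ and any $u,y\in[0,1]$ we have $g(u)=g(y)+\int_y^u g'(t)\rd t$, so
\[
|g(u)|\le |g(y)|+\int_0^1|g'(t)|\rd t .
\]
Integrating over $y\in\itr$ gives $|g(u)|\le \Vert g\Vert_{L^1(\itr)}+\Vert g'\Vert_{L^1(\itr)}$.

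\textbf{Induction on $s$.} Fix $\bsu\in\itr^s$ and apply the one-dimensional bound to $u_1\mapsto f(u_1,\bsu_{2:s})$. This gives
\[
|f(\bsu)|\le \int_{\itr}|f(y_1,\bsu_{2:s})|\rd y_1+\int_{\itr}|\partial^{\{1\}}f(y_1,\bsu_{2:s})|\rd y_1 .
\]
Each integrand is again a function of the remaining $s-1$ coordinates, controlled through the parameter $y_1$. Apply the one-dimensional bound successively in $u_2,\dots,u_s$ to each of these integrands, moving the pointwise bound inside the integrals over the $y$ variables. After $s$ steps this yields
\[
|f(\bsu)|\le\sum_{v\subseteq 1{:}s}\int_{\itr^s}|\partial^v f(\bsy)|\rd\bsy=\sum_{v\subseteq 1{:}s}\Vert\partial^v f\Vert_{L^1(\itr^s)} .
\]
The right-hand side is exactly $\Vert f\Vert_{W^{1,1}_{\mix}(\itr^s)}$, since the case $q=1$ of the norm carries no root. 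One could also write this more cleanly as the product identity
\[
f(\bsu)=\sum_{v}\int_{\itr^s}\partial^v f(\bsy)\prod_{j\in v}\bigl(\mathbf 1\{y_j<u_j\}-y_j\bigr)\rd\bsy ,
\]
in which every kernel factor has absolute value at most $1$. Taking absolute values then gives the bound immediately.

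\textbf{Density.} The main obstacle is justifying the passage to general $f$ in the mixed Sobolev space, where "$\sup$" must be read as an essential supremum or as applying to a continuous representative. First, smooth functions are dense in $W^{1,1}_{\mix}(\itr^s)$, by mollification after extending across the boundary by reflection. Take smooth $f_k\to f$ in $W^{1,1}_{\mix}$. The inequality applied to $f_k-f_m$ shows that $(f_k)$ is uniformly Cauchy, so it converges uniformly to a continuous function. This limit agrees with $f$ almost everywhere because $f_k\to f$ in $L^1$. Passing to the limit in the inequality for $f_k$ then gives the result for $f$.
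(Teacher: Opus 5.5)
Your argument is correct and is essentially the paper's proof: the averaged one-dimensional fundamental theorem of calculus, then induction over the coordinates. Your density step also supplies the passage from smooth $f$ to general $f\in W^{1,1}_{\mix}(\itr^s)$, which the paper leaves implicit by working only with $C^1$ functions. One slip in your optional kernel identity: the factor should be $\mathbf{1}\{y_j<u_j\}-(1-y_j)$, not $\mathbf{1}\{y_j<u_j\}-y_j$ (for $s=1$ and $f(y)=y^2$ your formula returns $u^2-1/3$), but the corrected kernel is still bounded by $1$ in absolute value, so nothing downstream changes.
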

\begin{proof}
    See Appendix~\ref{appA}.
\end{proof}

To learn the dimension structure of the integrand, we make use of ANOVA decomposition. Following the framework in \cite{kuo2010decompositions}, we introduce the generalized ANOVA decomposition. Let $P_j:L^1(\real^s,\varphi)\to L^1(\real^s,\varphi)$ denote the integration operator
$$P_j(f)(\bsx)=\int_{\real} f(\bsx) \varphi(x_j) \rd x_j \text{ for } \bsx \in \real^{s}.$$
Notice that $P_j(f)$ does not depend on $x_j$ and $P^2_j=P_j$. We further define the iterated integration operator $P_v=\prod_{j\in v}P_j$. By Fubini's theorem, $P_j$ in $P_v$ can be applied in any order. By convention, $P_\emptyset=I$ is the identity operator. The ANOVA decomposition of $f\in L^1(\real^s,\varphi)$  is given by 
\begin{equation}\label{eqn:anovadecomp}
 f(\bsx)=\sum_{v\subseteq 1{:}s} f_v(\bsx),  
\end{equation}
where $$f_v(\bsx)=\Big(\prod_{j\in v} (I-P_j)\Big) P_{1{:}s\setminus v} f(\bsx) = P_{1{:}s\setminus v} f(\bsx)-\sum_{w\subsetneq v} f_w(\bsx).$$
It is clear that $f_v(\bsx)$ depends only on $\bsx_v$ and hence we may write $f_v(\bsx_v)$ instead. 
%It follows that if $j\in v$,
% \begin{equation}\label{eqn:ANOVAPj}
%   P_j (f_v)=(P_j-P_j^2)\Big(\prod_{j'\in v, j'\neq j} (I-P_{j'})\Big) P_{1{:}s\setminus v} f =0.  
% \end{equation}
The next lemma comes from \cite[Lemma 2.4]{pan:2025}.
\begin{lemma}
    If $f\in W^{1,q}_{\mix}(\real^s,\varphi)$, then $f_v\in W^{1,q}_{\mix}(\real^s,\varphi)$ for all $v\subseteq 1{:}s$. 
\end{lemma}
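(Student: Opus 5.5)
The plan is to show that each operator appearing in $f_v=\big(\prod_{j\in v}(I-P_j)\big)P_{1{:}s\setminus v}f$ maps $W^{1,q}_{\mix}(\real^s,\varphi)$ boundedly into itself. Since $f_v$ is a finite composition of the operators $P_j$ and $I-P_j$, the claim follows by induction on the number of factors. Because $I$ is trivially bounded, it suffices to prove that a single $P_j$ is bounded on $W^{1,q}_{\mix}(\real^s,\varphi)$, with operator norm at most $1$.

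Fix $j$ and $w\subseteq 1{:}s$. The first step is to identify $\partial^w P_j f$. If $j\in w$, then $P_jf$ does not depend on $x_j$, so $\partial^w P_jf=0$. If $j\notin w$, I will show $\partial^w P_j f=P_j(\partial^w f)$, i.e.\ differentiation under the integral sign in the variables $\bsx_w$. This exchange is the main obstacle, because we only know $f\in W^{1,q}_{\mix}$ with weak derivatives. I would verify it weakly: test against $\psi\in C_c^\infty(\real^{w})$ (or against a product test function in all variables other than $x_j$), use Fubini, which is justified since $\partial^{u}f\in L^q(\real^s,\varphi)\subseteq L^1(\real^s,\varphi)$ for $q\ge 1$ and $\varphi$ is a probability density, and then apply the definition of the weak derivative of $f$ with respect to $\bsx_w$ slice by slice in $x_j$. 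Some care is needed because the weight $\prod_k\varphi(x_k)$ is not constant. This is handled by working on compact sets where $\varphi$ is bounded below, or by interpreting the weak derivatives locally, which is the same setting used in \cite{pan:2025}.

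Given that identity, Jensen's (or Hölder's) inequality with respect to the probability measure $\varphi(x_j)\rd x_j$ gives
$$\Vert P_j(\partial^w f)\Vert^q_{L^q(\real^s,\varphi)}=\int_{\real^{s}}\Big|\int_\real \partial^w f\,\varphi(x_j)\rd x_j\Big|^q\prod_{k}\varphi(x_k)\rd\bsx\le \Vert \partial^w f\Vert^q_{L^q(\real^s,\varphi)}.$$
Here I use that $P_jf$ is constant in $x_j$, so integrating the extra $\varphi(x_j)$ against it contributes a factor $1$.

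Summing over $w$ yields $\Vert P_jf\Vert_{W^{1,q}_{\mix}}\le\Vert f\Vert_{W^{1,q}_{\mix}}$. It follows that $\Vert (I-P_j)f\Vert_{W^{1,q}_{\mix}}\le 2\Vert f\Vert_{W^{1,q}_{\mix}}$. Composing these bounds gives $f_v\in W^{1,q}_{\mix}(\real^s,\varphi)$, with $\Vert f_v\Vert_{W^{1,q}_{\mix}}\le 2^{|v|}\Vert f\Vert_{W^{1,q}_{\mix}}$.
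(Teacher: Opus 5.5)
The paper does not prove this lemma itself; it imports it from \cite[Lemma 2.4]{pan:2025}, so there is no in-paper argument to compare against line by line. Your argument is a correct, self-contained proof. You reduce everything to the contraction $\Vert P_j g\Vert_{W^{1,q}_{\mix}(\real^s,\varphi)}\le\Vert g\Vert_{W^{1,q}_{\mix}(\real^s,\varphi)}$, using $\partial^w P_j g=0$ for $j\in w$, $\partial^w P_j g=P_j\partial^w g$ for $j\notin w$, and Jensen in $x_j$. Composing then gives the explicit bound $\Vert f_v\Vert_{W^{1,q}_{\mix}(\real^s,\varphi)}\le 2^{|v|}\Vert f\Vert_{W^{1,q}_{\mix}(\real^s,\varphi)}$, which is more than the statement asks.

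The one delicate step, commuting $\partial^w$ with $P_j$, is exactly what the paper later isolates as Assumption~\ref{assump:interchange}. It verifies that assumption in Lemma~\ref{lem:interchange} (Appendix~\ref{appC}) via difference quotients, Fubini and the Lebesgue differentiation theorem, for continuous strictly positive $\varphi$. Your distributional route tests against $\eta(x_j)\Psi(\bsx_{\{j\}^c})$ with $\eta\in C_c^\infty(\real)$. This shows that $x_j\mapsto\int f\,\partial^w\Psi\rd\bsx_{\{j\}^c}-(-1)^{|w|}\int\partial^w f\,\Psi\rd\bsx_{\{j\}^c}$ vanishes a.e., after which you integrate against $\varphi(x_j)$. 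That route needs only that $\varphi$ be bounded below on compact sets. This lower bound gives $L^1(\real^s,\varphi)\subseteq L^1_{\mathrm{loc}}(\real^s)$, makes Fubini legitimate on $\mathrm{supp}\,\Psi\times\real$, and also makes $P_j g$ locally integrable in your case $j\in w$.

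You should say explicitly where this lower bound comes from. Positivity alone does not give it for a merely measurable density, but Assumption~\ref{assump:rho} does: $\varphi(x)\ge c_\varepsilon\Phi(-M)^{1+\varepsilon}>0$ on $[-M,0]$, and $\varphi$ is symmetric.

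Iterating your identity $\partial^w P_j=P_j\partial^w$ over several $j\notin w$ in fact recovers Assumption~\ref{assump:interchange} under this weaker hypothesis, with derivatives read in the weak sense. So your argument gains a little generality. The paper's argument instead produces the derivative on the right-hand side as an a.e.\ limit of difference quotients.
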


As mentioned in Section~\ref{sec:intro}, the quadrature points in the estimator \eqref{eqn:muhat} we use is scrambled nets. We next introduce scrambled $(t,m,s)$-nets in base $b\ge 2$ briefly; see \cite{dick:pill:2010} for more details.
For $t,\ m\in\natu_0$ and an integer $b\ge 2$, a  set $\cp:=\{\bsu_0,\dots,\bsu_{b^m-1}\}$ in $[0,1)^s$ is called
a $(t,m,s)$-net in base $b$ if every interval of the form $\prod_{j=1}^s\left[\frac{a_j}{b^{k_j}},\frac{a_j+1}{b^{k_j}}\right)$ contains exactly $b^t$ points of $\cp$ for all integers $a_j\in[0,b^{k_j})$ and all $k_j\in \natu_0$ satisfying $\sum_{j=1}^sk_j=m-t$. For $\emptyset \neq w\subseteq 1{:}s$, the projection of $\cp$ on  coordinates $j\in w$ forms a $(t_w,m,|w|)$-net in base $b$, where $t_\omega\in \natu$ is called $t$-quality parameter and $t_{w}=t$ when $w=1{:}s$. When performing Owen's scrambling \cite{rtms} on $\cp$, the resulting point set is called the scrambled $(t,m,s)$-net, which is also a $(t,m,s)$-net with probability one.

Following \cite{pan:2025}, we assume that the density $\varphi (x)$  is a strictly  positive, bounded, symmetric, light-tailed  function in the following.

\begin{assumption}~\label{assump:rho}
Assume that $\varphi(x)>0$, $\varphi(x)=\varphi(-x)$, $\varphi_\infty:=\sup_{x\in \real} \varphi(x)<\infty$, and for any $\varepsilon>0$, there exists a constant $c_\varepsilon>0$ such that
$\varphi(x)\geq c_\varepsilon \Phi(x)^{1+\varepsilon}$ for any $x\le 0$. 
\end{assumption}

Recall that the resulting integrand over the unit cube $\itr^s$ under the IS is $f^w(\bsu)=w(\bsu) f\circ T(\bsu)$. Define
\begin{align}
    \Vert f\Vert_{q}&=\sup_{v\subseteq 1{:}s}\Vert f_v \Vert_{W^{1,q}_{\mix}(\real^s,\varphi)}\label{eq:fq},\\
    \gamma_v&=\begin{cases}
        \Vert f\Vert^{-1}_{q} \Vert f_v \Vert_{W^{1,q}_{\mix}(\real^s,\varphi)}\quad &\Vert f\Vert_{q}>0\\
        0\quad &\Vert f\Vert_{q}=0
    \end{cases}.\label{eq:gammav}
\end{align}
Note that $\gamma_v$ can be viewed as an adjusted first-order mixed $L^q$ norm of the ANOVA term $f_v$ to satisfy $\gamma_v\in[0,1]$.
We are ready to state our main theorem, as a generalization of  \cite[Theorem 5.3]{pan:2025}. Its proof is encapsulated in Section~\ref{sec:proofs} for interested readers.

\begin{theorem}\label{thm:qmcvar}
    Suppose that Assumption~\ref{assump:rho} is satisfied. Let $\{\bsu_0,\dots,\bsu_{n-1}\}$ be a scrambled digital net in base $b\ge 2$ with $n=b^m$ and $t$-quality parameters $\{t_\omega\mid \emptyset\neq \omega\subseteq 1{:}s\}$. Then given $\alpha\in (0,1)$, $f\in W^{1,q}_{\mix}(\real^s,\varphi)$ for $q>1$, $w_j=w_{\theta_j}$  for $\{\theta_j\mid j\in 1{:}s\}\subseteq (0,1/2]$ and $\varepsilon>0$, we have
    \begin{equation}\label{eqn:maintheorem}
       \e \Big|\frac{1}{n}\sum_{i=0}^{n-1}f^w(\bsu_i)-\int_{\real^s} f(\bsx) \prod_{j=1}^s \varphi(x_j)\rd \bsx\Big|^2 
            \leq \frac{\Vert f\Vert^2_q }{n^{1+\alpha}}\sum_{\emptyset \neq \omega\subseteq 1{:}s }  C^{|\omega|}_{*} m^{|\omega|-1}\Tilde{\gamma}_\omega,  
    \end{equation}
        where $C_{*}$ is a constant depending on $\varepsilon,p,q,b$ and $\alpha$, and
        $$\Tilde{\gamma}_\omega=b^{(1+\alpha)t_\omega} \sum_{v\subseteq \omega}\gamma^2_v\prod_{j\in v} \theta^{-\nu_{\alpha,q}-\varepsilon}_j \prod_{j\in \omega\setminus v} \theta^{1-\alpha}_j$$
        for $\gamma_v$ given by \eqref{eq:gammav} and
        $$
        \nu_{\alpha,q}=\max(2\alpha/q,2/q+\alpha-1)=\begin{cases}
            2/q+\alpha-1 & q\in(1,2]\\
            2\alpha/q & q\in(2,\infty).
        \end{cases}
        $$
    \end{theorem}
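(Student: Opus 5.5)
Since $w$ is a product weight and each $T_j$ acts on a single coordinate, the ANOVA decomposition \eqref{eqn:anovadecomp} carries over to the unit cube: $f^w=\sum_{v} f_v^w$ with $f_v^w(\bsu)=\prod_{j\notin v}w_j(u_j)\cdot \big(\prod_{j\in v}w_j(u_j)\big) f_v\circ T(\bsu_v)$. Note that $f_v^w$ is \emph{not} an ANOVA component on $\itr^s$, because $w_j$ is nonconstant for $j\notin v$. I would therefore work with the Walsh/gain-coefficient representation of the scrambled net variance. For Owen-scrambled digital nets one has
\[
\e|\hat\mu-\mu|^2=\frac1n\sum_{\emptyset\ne\omega}\sum_{\bsk}\Gamma_{\omega,\bsk}\sigma^2_{\omega,\bsk},
\]
where $\sigma^2_{\omega,\bsk}$ is the variance of $f^w$ carried by nested $b$-adic levels $\bsk\in\natu^{|\omega|}$ in the coordinates of $\omega$. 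The gain coefficients satisfy $\Gamma_{\omega,\bsk}=0$ when $\sum_j k_j\le m-t_\omega$ (levels counted appropriately), and $\Gamma_{\omega,\bsk}\le C^{|\omega|}b^{t_\omega}$ otherwise. Writing $\sigma_{\omega,\bsk}(f^w)\le\sum_{v\subseteq\omega}\sigma_{\omega,\bsk}(f_v^w)$ (components with $v\not\subseteq\omega$ contribute nothing to $\omega$ since $f_v^w$ is not constant only where…, more precisely by orthogonality of Walsh levels one restricts to $v\subseteq\omega$), and applying Cauchy–Schwarz with weights, reduces the problem to bounding $\sigma^2_{\omega,\bsk}(f_v^w)$ for each $v\subseteq\omega$.

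\textbf{Step 1: one-dimensional decay estimates.} $f_v^w$ factorizes as a product over $j\in\omega\setminus v$ of the factors $w_j$, times $g_v:=w_v\cdot f_v\circ T$ in the $v$ coordinates. For the pure-weight coordinates, the Haar/Walsh level-$k$ variance of $w_\theta$ should be $\lesssim \min(1,\theta b^{k})\,b^{-2k}\theta^{-1}$ or similar, since $w_\theta$ is smooth except on boundary layers of width $\theta$. Summed against $b^{\alpha k}$, this gives the factor $\theta_j^{1-\alpha}$. For the $v$ coordinates, I would bound the level-$\bsk$ variance through the mixed derivative $\partial^v g_v$, using Lemma~\ref{lem:Sobolevimbedding}-type arguments on each $b$-adic box. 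The chain rule gives $\partial^v g_v$ in terms of $\partial^{u}f_v\circ T$, multiplied by products of $T_j'w_j$ and $w_j'$. Changing variables $\bsx=T(\bsu)$, the $L^q(\real^s,\varphi)$ norm of $\partial^u f_v$ appears together with weight factors of the form $w_j^{1-1/q}$ or $(w_j^2/\varphi\circ T_j)^{\cdot}$, and these must be bounded in some $L^{r}$ norm, with $r$ the Hölder conjugate-type exponent. Assumption~\ref{assump:rho} ($\varphi\ge c_\varepsilon\Phi^{1+\varepsilon}$) combined with the super-polynomial vanishing of $\eta_p$ at $0$ makes these weighted quantities finite, with explicit $\theta_j$-dependence.

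\textbf{Step 2: the exponent $\nu_{\alpha,q}$.} Per coordinate, the estimate I would aim for is that the variance at level $k$ is $\lesssim b^{-k(1+\beta)}$ times a $\theta$ power, after interpolating between two bounds:
\begin{itemize}
\item an $L^\infty$-type bound on small boxes, via Hölder with exponent $q$, which yields decay $b^{-k(2-2/q)}$ for $q\le2$ and $b^{-k}$ for $q>2$;
\item a boundary-layer bound.
\end{itemize}
Splitting each coordinate's levels at $b^{k}\approx\theta_j^{-1}$ and summing $b^{\alpha k}\times$ variance produces the $\theta_j^{-\nu_{\alpha,q}-\varepsilon}$ factor. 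The two regimes of $\max(2\alpha/q,\,2/q+\alpha-1)$ come respectively from the large-$q$ region, where the damping costs $\theta^{-2\alpha/q}$, and the $q\le2$ region, where the lack of $L^2$ integrability near the boundary costs $\theta^{-(2/q-1)}$ in addition to $\theta^{-\alpha}$.

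\textbf{Step 3: summation.} Using $\Gamma_{\omega,\bsk}$ and $n^{-1}\cdot b^{-\alpha(m-t_\omega)}$ from $\sum_j k_j>m-t_\omega$, I multiply by $b^{\alpha\sum k_j}$. The count of $\bsk$ with a given sum gives the factor $m^{|\omega|-1}$, and the constants collect into $C_*^{|\omega|}$, yielding \eqref{eqn:maintheorem} with $\gamma_v^2\Vert f\Vert_q^2$ replacing $\Vert f_v\Vert^2$.

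The main obstacle is Step 2. It requires a sharp per-coordinate estimate of the weighted $L^q$-to-variance transfer for $q>2$: one has to exploit extra integrability rather than just reducing to $q=2$, so that the exponent improves to $2\alpha/q$. I would first try to prove the one-dimensional lemma $\sum_k b^{(1+\alpha)k}\sigma_k^2(w_\theta\cdot h\circ T_\theta)\lesssim\theta^{-\nu_{\alpha,q}-\varepsilon}\Vert h\Vert^2_{W^{1,q}(\real,\varphi)}$, and then tensorize it.
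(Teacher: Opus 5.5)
Your architecture is the paper's. You expand $\hat f^w(\bsk)=\sum_{v\subseteq\supp(\bsk)}\hat f^w_v(\bsk_v)\prod_{j\in\supp(\bsk)\setminus v}\hat w_j(k_j)$, apply Cauchy--Schwarz over $v$, and bound the level sums of each factor. You then insert a bound $\sigma^2_{\bsell}\lesssim b^{-\alpha\Vert\bsell\Vert_1}(\cdots)$ into the gain-coefficient formula, where counting the $\bsell\in\natu^\omega$ with $\Vert\bsell\Vert_1>m-t_\omega-|\omega|$ yields $m^{|\omega|-1}b^{-\alpha m}b^{(1+\alpha)t_\omega}$. (The restriction to $v\subseteq\supp(\bsk)$ is not a Walsh-orthogonality fact. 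It is the ANOVA property carried to the cube: $\int_0^1 f_v\circ T\,w_j\rd u_j=\int_\real f_v\,\varphi(x_j)\rd x_j=0$ for $j\in v$.)

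\textbf{The gap is Step 2.} It carries essentially all the content of the theorem, and you leave it as a heuristic that is also mis-normalized. Step 3 only needs level sums of order $b^{-\alpha\ell}$, since the factor $1/n$ already multiplies the variance formula. Your one-dimensional lemma instead asks for $\sum_k b^{(1+\alpha)k}\sigma_k^2<\infty$, and this is false for $q<2$ and $\alpha>2-2/q$. Take Gaussian $\varphi$ and $f(x)=|x|^{1-a}\chi(x)$ with $\chi$ a smooth cutoff and $1-\alpha/2<a<1/q$. Then $f\in W^{1,q}(\real,\varphi)$, but $f^w(u)\approx c|u-1/2|^{1-a}$ near $u=1/2$. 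So the tail $\sum_{k'>k}\sigma_{k'}^2$ (the within-box variance at scale $b^{-k}$) is $\gtrsim b^{-(3-2a)k}$, which is not $o(b^{-(1+\alpha)k})$.

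Your guess for the pure weight factors has a similar defect. At low frequencies $\min(1,\theta b^k)b^{-2k}\theta^{-1}=b^{-k}$, which after weighting by $b^{\alpha k}$ sums to $O(1)$, not $\theta^{1-\alpha}$. The correct input is $|\hat w_\theta(k)|\le 4\min(\theta,b^{-\lceil k\rceil+1})$, which follows from $\Vert w_\theta-(1-\theta)^{-1}\Vert_{L^1(\itr)}\le 4\theta$ and total variation of $w_\theta$ at most $4$. Hence the level-$\ell$ sum is at most $16(b-1)\min(\theta^2b^{\ell-1},b^{-\ell+1})\le 16(b-1)\theta^{1-\alpha}b^{-\alpha(\ell-1)}$.

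\textbf{What is missing is the mechanism that actually produces $\nu_{\alpha,q}$.} In the paper it does not come from splitting frequencies at $b^k\approx\theta_j^{-1}$ or from box-wise Hölder estimates. It comes from two global norms of $g=f^w_v$, combined by interpolation.

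First, $\Vert\partial^v g\Vert_{L^1(\itr^{|v|})}\le\Vert f_v\Vert_{W^{1,q}_{\mix}(\real^s,\varphi)}\prod_{j\in v}C\theta_j^{-1/q-\varepsilon}$. After the chain rule, each term is bounded by $|(\partial^{v'}f_v)\circ T|\,w^{1/q}$ times one-dimensional factors: $w_j^{2-1/q}/\varphi\circ T_j$ for $j\in v'$ and $|w_j'|/w_j^{1/q}$ for $j\in v\setminus v'$. Hölder with exponents $(q,q^*)$ turns the first factor into exactly $\Vert\partial^{v'}f_v\Vert_{L^q(\real^s,\varphi)}$ via $\bsx=T(\bsu)$. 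The $L^{q^*}(\itr)$ norms of the others are $O(\theta_j^{-1/q-\varepsilon})$, using three facts:
\begin{itemize}
\item $\varphi\circ T_\theta(u)\gtrsim(\theta w_\theta(u)(u/\theta)^{p+1})^{1+\varepsilon}$ on $(0,\theta)$;
\item $\varphi\circ T_\theta(u)\gtrsim(u-\theta/2)^{1+\varepsilon}$ on $[\theta,1/2]$, by Assumption~\ref{assump:rho};
\item $|w_\theta'|\lesssim\theta^{-1}w_\theta$.
\end{itemize}

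Second, when $q\ge2$, $\Vert g\Vert_{L^2}\le\Vert g\Vert_{L^q}\lesssim\Vert f_v\Vert_{L^q(\real^s,\varphi)}$ with no $\theta$-cost, because $w_j\le 2$. When $q<2$, one uses $\Vert g\Vert_{L^2}\le\Vert g\Vert_{L^q}^{q/2}\Vert g\Vert_{L^\infty}^{1-q/2}$ and $\Vert g\Vert_{L^\infty}\le\Vert g\Vert_{W^{1,1}_{\mix}}$ (Lemma~\ref{lem:Sobolevimbedding}) together with the first bound. The cost is $\prod_{j\in v}\theta_j^{-(1/q-1/2)}$ up to $\varepsilon$.

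These are combined as follows. Integrating by parts against $\mathcal{I}(\mathrm{wal}_k)(u)=\int_0^u\mathrm{wal}_k(t)\rd t$, whose sup norm is at most $b^{-\lceil k\rceil+1}$, gives $|\hat g(\bsk_v)|\le\Vert\partial^v g\Vert_{L^1}\prod_{j\in v}b^{-\lceil k_j\rceil+1}$. Hence level sums are at most $(b(b-1))^{|v|}b^{-\Vert\bsell_v\Vert_1}\Vert\partial^v g\Vert_{L^1}^2$. Bessel's inequality gives level sums at most $\Vert g\Vert_{L^2}^2$. The geometric mean of the two bounds with weights $\alpha$ and $1-\alpha$ is $b^{-\alpha\Vert\bsell_v\Vert_1}\Vert g\Vert_{L^2}^{2-2\alpha}\Vert\partial^v g\Vert_{L^1}^{2\alpha}$. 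Its cost in $\theta_j$ is $2\alpha/q$ for $q\ge 2$ and $(1-\alpha)(2/q-1)+2\alpha/q=2/q+\alpha-1$ for $q<2$, which is exactly $\nu_{\alpha,q}$ up to $\varepsilon$. Without this or an equivalent quantitative argument, your proposal does not establish the theorem.
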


\begin{remark}
Under Assumption~\ref{assump:rho},  the CDF $\Phi(x)$ and  its inverse $\Phi^{-1}(u)$ is almost everywhere differentiable and strictly increasing. It is easy to verify the Gaussian density $\varphi(x)=\exp(-x^2/2)/\sqrt{2\pi}$ satisfies Assumption~\ref{assump:rho}.    As shown in \cite{c812a4a5-75ac-3614-936d-5782e44941d0}, for the Sobol' sequence \cite{sobol67} and the Niederreiter sequence \cite{NIEDERREITER198851}, the $t$-quality parameters $t_w$ satisfy that
\begin{equation}\label{eqn:tutj}
    t_\omega\leq \sum_{j\in \omega}t_j
\end{equation}
with $t_j= O(\log_b(j))$, which is an important ingredient for analyzing dimension-independent convergences in the following corollary.
\end{remark}

\begin{corollary}\label{cor:qmcvar}
        Suppose that the $t$-quality parameters $\{t_\omega\mid \emptyset\neq \omega\subseteq 1{:}s\}$ of the  scrambled digital net  satisfy \eqref{eqn:tutj} and the ANOVA terms of the function $f(\bsx)$ have   adjusted first-order mixed $L^q$ norms 
       satisfying
        $$\gamma_v\leq \prod_{j\in v}\Gamma_j \ \forall v\subseteq 1{:}s$$
        for $\{\Gamma_j\mid j\in 1{:}s\}$. 
        Then under the assumptions of Theorem~\ref{thm:qmcvar},  for any $\lambda\in (0,1)$,
        $$\e \Big|\frac{1}{n}\sum_{i=0}^{n-1}f^w(\bsu_i)-\int_{\real^s} f(\bsx) \prod_{j=1}^s \varphi(x_j)\rd \bsx\Big|^2 \leq \frac{\Vert f\Vert^2_q }{n^{1+\alpha}}\exp(C_{\lambda}S_\lambda m^\lambda),$$
        where $n=b^m$, $C_{\lambda}$ is a constant depending on $\lambda$ and 
        $$S_\lambda=\sum_{j=1}^s \Tilde{\Gamma}^\lambda_j \quad \text{for} \quad \Tilde{\Gamma}_j=C_*b^{(1+\alpha)t_j} (\Gamma^2_j \theta^{-\nu_{\alpha,q}-\varepsilon }_j+\theta^{1-\alpha}_j).$$
    \end{corollary}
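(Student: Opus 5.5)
The corollary follows from Theorem~\ref{thm:qmcvar} by a purely combinatorial estimate. The plan is to bound each $\Tilde{\gamma}_\omega$ by a product over $j\in\omega$, and then control $\sum_\omega m^{|\omega|-1}\prod_{j\in\omega}(\cdot)$ by an exponential.

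\textbf{Step 1: product bound on $\Tilde{\gamma}_\omega$.} Using $\gamma_v\le\prod_{j\in v}\Gamma_j$ and \eqref{eqn:tutj}, we have $b^{(1+\alpha)t_\omega}\le\prod_{j\in\omega}b^{(1+\alpha)t_j}$. Hence
$$\Tilde{\gamma}_\omega\le\prod_{j\in\omega}b^{(1+\alpha)t_j}\sum_{v\subseteq\omega}\prod_{j\in v}\Gamma_j^2\theta_j^{-\nu_{\alpha,q}-\varepsilon}\prod_{j\in\omega\setminus v}\theta_j^{1-\alpha}.$$
By the binomial expansion of products, the sum over $v$ equals $\prod_{j\in\omega}(\Gamma_j^2\theta_j^{-\nu_{\alpha,q}-\varepsilon}+\theta_j^{1-\alpha})$. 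Multiplying in the $C_*^{|\omega|}$ factor gives
$$C_*^{|\omega|}\Tilde{\gamma}_\omega\le\prod_{j\in\omega}\Tilde{\Gamma}_j.$$
Therefore the right-hand side of \eqref{eqn:maintheorem} is at most $\Vert f\Vert_q^2n^{-1-\alpha}\sum_{\emptyset\ne\omega}m^{|\omega|-1}\prod_{j\in\omega}\Tilde{\Gamma}_j$.

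\textbf{Step 2: the main obstacle.} We must absorb the factor $m^{|\omega|-1}$ into an exponential with only $m^\lambda$ growth. First try the elementary inequality $m^{k}\le C_\lambda'^{\,k}\,(k!)^{1-\lambda}m^{\lambda k}$, applied with $k=|\omega|$ (or $|\omega|-1$). This holds because $m^{(1-\lambda)k}\le (k!)^{1-\lambda}e^{(1-\lambda) m}$ is too crude; better is to maximize $m^{(1-\lambda)k}/(k!)^{1-\lambda}$ over $k$. The cleaner route is Hölder/Jensen. Write
$$\sum_\omega m^{|\omega|}\prod_{j\in\omega}\Tilde{\Gamma}_j\le\Big(\sum_\omega\prod_{j\in\omega}\Tilde{\Gamma}_j^\lambda\, m^{\lambda|\omega|}\cdot a_{|\omega|}\Big)^{1/\lambda},$$
using $\sum x_i\le(\sum x_i^\lambda)^{1/\lambda}$ for $\lambda\in(0,1)$. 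Here $a_{|\omega|}=m^{|\omega|(1-\lambda)}$ would remain, so instead the plan is to apply $\sum x_i\le(\sum x_i^\lambda)^{1/\lambda}$ directly to $x_\omega=m^{|\omega|}\prod\Tilde{\Gamma}_j$. This gives $x_\omega^\lambda=\prod_{j\in\omega}(m^\lambda\Tilde{\Gamma}_j^\lambda)$, and
$$\sum_{\omega}\prod_{j\in\omega}m^\lambda\Tilde{\Gamma}_j^\lambda\le\prod_{j=1}^s(1+m^\lambda\Tilde{\Gamma}_j^\lambda)\le\exp(m^\lambda S_\lambda).$$
Hence the total is at most $\exp(m^\lambda S_\lambda/\lambda)$, which yields $C_\lambda=1/\lambda$. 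The leftover factor $1/m$ from $m^{|\omega|-1}$ (with $m\ge1$) is simply dropped. If $m=0$ the statement is trivial or handled separately, since $m^{|\omega|-1}$ is then $0$ except for $|\omega|=1$ with the convention $0^0=1$; bounding $m^{|\omega|-1}\le\max(m,1)^{|\omega|}$ covers all cases.

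\textbf{Step 3.} Combine Steps 1 and 2 to obtain
$$\e|\cdot|^2\le\Vert f\Vert_q^2n^{-1-\alpha}\exp(C_\lambda S_\lambda m^\lambda),$$
which is the claimed bound. The only delicate point is the subadditivity $\big(\sum x_i\big)^\lambda\le\sum x_i^\lambda$ for $\lambda\in(0,1)$, which is standard.
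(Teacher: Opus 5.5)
Your proof is correct. Step~1 is exactly the paper's reduction to $\sum_{\emptyset\neq\omega} m^{|\omega|-1}\prod_{j\in\omega}\tilde{\Gamma}_j$.

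Step~2 takes a slightly different route from the paper. The paper first bounds this sum by $\prod_{j=1}^s(1+m\tilde{\Gamma}_j)$. It then applies, factor by factor, the scalar inequality $\log(1+x)\le C_\lambda x^\lambda$, whose constant it only shows to exist via a limit argument. You instead apply $(\sum_\omega x_\omega)^\lambda\le\sum_\omega x_\omega^\lambda$ to the whole sum, which still factorizes because $x_\omega^\lambda$ is again a product, and finish with $1+y\le e^y$.

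Since $\sum_\omega x_\omega^\lambda=\prod_j\bigl(1+(m\tilde{\Gamma}_j)^\lambda\bigr)$, your route amounts to the chain $\log(1+x)\le\lambda^{-1}\log(1+x^\lambda)\le\lambda^{-1}x^\lambda$. This is the paper's inequality with the explicit constant $C_\lambda=1/\lambda$. The best constant in the paper's inequality, $\sup_{x>0}x^{-\lambda}\log(1+x)$, is at most $1/\lambda$ (roughly $1/(e\lambda)$ for small $\lambda$). So you lose only a constant factor and gain an explicit $C_\lambda$.

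Two cosmetic points:
\begin{itemize}
\item Delete the abandoned first attempts in Step~2, namely the $(k!)^{1-\lambda}$ bound and the display containing $a_{|\omega|}$.
\item Drop the $m=0$ aside, which does not work as claimed: replacing $m$ by $\max(m,1)$ gives $\exp(S_\lambda/\lambda)$, not $\exp(0)=1$. Your argument needs $m\ge 1$, just like the paper's step $\sum_{\emptyset\neq\omega}m^{|\omega|-1}(\cdots)\le\sum_{\omega}m^{|\omega|}(\cdots)$. That is the implicit setting of the statement.
\end{itemize}
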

    \begin{proof}
        Plugging in the upper bounds for $t_\omega$ and $\gamma_v$ into equation~\eqref{eqn:maintheorem}, a straightforward calculation shows 
        $$\e \Big|\frac{1}{n}\sum_{i=0}^{n-1}f^w(\bsu_i)-\int_{\real^s} f(\bsx) \prod_{j=1}^s \varphi(x_j)\rd \bsx\Big|^2 \leq \frac{\Vert f\Vert^2_q }{n^{1+\alpha}}\sum_{\emptyset\neq \omega \subseteq 1{:}s}m^{|\omega|-1}\prod_{j\in \omega} \Tilde{\Gamma}_j.$$
        Because $x^{-\lambda}\log(1+x)\to 0$ when $x\to 0^+$ or $x\to\infty$, we can find a constant $C_\lambda$ so that $\log(1+x)\leq C_\lambda x^{\lambda}$ over $x\in (0,\infty)$. Therefore,
        $$\sum_{\emptyset\neq \omega \subseteq 1{:}s}m^{|\omega|-1}\prod_{j\in \omega} \Tilde{\Gamma}_j\leq \sum_{\omega \subseteq 1{:}s}m^{|\omega|}\prod_{j\in \omega} \Tilde{\Gamma}_j=\prod_{j=1}^s (1+m \Tilde{\Gamma}_j)\leq \exp\Big(C_{\lambda}\sum_{j=1}^sm^\lambda \Tilde{\Gamma}^\lambda_j\Big).$$
        This completes the proof.
 \end{proof}
 \begin{remark}\label{rmk:tractability}
Since $\lim_{n\to\infty} n^{-\varepsilon}\exp(m^\lambda)=0$ for any $\varepsilon>0$, the above corollary shows $\hat{\mu}$ attains dimension-independent MSE rate of nearly $O(n^{-1-\alpha})$ if $S_\lambda$ and $\Vert f\Vert^2_q $ stay bounded as $s\to\infty$ for some $\lambda\in(0,1)$. It generalizes \cite[Corollary 5.5]{pan:2025} to cases where  $f\in W^{1,q}_{\mix}(\real^s,\varphi)$ for $q\in (1,\infty)$.
\end{remark}

\begin{remark}\label{rmk:optimalalpha}
% By a similar analysis as in \cite[Remark 5.6]{pan:2025}, when $\Gamma_j=O(j^{-\rho})$ for $\rho>\max(2/q,1)$ and $t_j=O(\log_b(j))$, we can set
% $$\theta_j=\begin{cases}
%     \theta_0 j^{-\rho q}, &\text{ if } q\in (1,2] \\
%     \theta_0 j^{-\rho q'} \text{ for } q'\in (2,q] , &\text{ if } q\in (2,\infty)
% \end{cases}$$
% with $\theta_0\in (0,1/2]$, and show that the MSE of $\hat{\mu}$ converges at a nearly $O(n^{-1-\alpha^*})$ rate independent of the dimension, where
% $$\alpha^*=\begin{dcases}
%     \frac{\rho q -2}{\rho q+1}, &\text{ if } q\in (1,2] \\
%     \frac{\rho q' -2}{\rho q'+1}, &\text{ if } q\in (2,\infty) \text{ and } \frac{\rho q' -2}{\rho q'+1}\geq \frac{q(q' -2)}{q'(q-2)}\\
%     \frac{2\rho q-2q}{2\rho q'+q}, &\text{ if } q\in (2,\infty) \text{ and } \frac{\rho q' -2}{\rho q'+1}< \frac{q(q' -2)}{q'(q-2)}
% \end{dcases}.$$
% Note that in the third case the rate improves when we decrease $q'$. This is because $\Gamma^2_j \theta^{-\nu_{\alpha,q}-\varepsilon }_j$, which originates from the $W^{1,1}_{\mix}(\itr^s)$-norm of $f^w$, is dominating in this case and a smaller $q'$ produces less singular derivatives.

Assume that $\Gamma_j=O(j^{-\rho})$ for $\rho>\max(2/q,1)$ and $t_j=O(\log_b(j))$. Then
$$
\Tilde{\Gamma}_j=O(j^{1+\alpha} (j^{-2\rho} \theta^{-\nu_{\alpha,q}-\varepsilon }_j+\theta^{1-\alpha}_j)).
$$
Following an analysis similar to that in \cite[Remark 5.6]{pan:2025}, we balance the two terms $j^{-2\rho} \theta^{-\nu_{\alpha,q}}_j$ and $\theta^{1-\alpha}_j$ in $\Tilde{\Gamma}_j$ by setting
\begin{equation}\label{eqn:thetachoice}
 \theta_j=\begin{cases}
    \theta_0 j^{-\rho q}, &\text{ if } q\in (1,2] \\
    \theta_0 j^{-2\rho/(2\alpha/q+1-\alpha)}, &\text{ if } q\in (2,\infty)
\end{cases}   
\end{equation}
with a constant $\theta_0\in (0,1/2]$.

For $q\in (1,2]$, the above choice gives $\Tilde{\Gamma}_j=O(j^{1+\alpha-(1-\alpha)\rho q+\rho q \varepsilon})$. To ensure $\sum_{j=1}^\infty \Tilde{\Gamma}^\lambda_j<\infty$ for some $\lambda\in (0,1)$, we require that 
$$
1+\alpha-(1-\alpha)\rho q<-1,
$$
since $\varepsilon>0$ may be taken arbitrarily small.
This leads to the condition 
$$0<\alpha <a^*(q,\rho):=\frac{\rho q-2}{\rho q+1}.$$ 
For $q>2$, we have 
$
\Tilde{\Gamma}_j=O(j^{1+\alpha-2\rho(1-\alpha)/(2\alpha/q+1-\alpha)+\varepsilon'}),
$
where $\varepsilon'=2\varepsilon\rho/(2\alpha/q+1-\alpha)>0$ is arbitrarily small. To ensure $\sum_{j=1}^\infty \Tilde{\Gamma}^\lambda_j<\infty$ for some $\lambda\in (0,1)$, we need
$$
1+\alpha-2\rho(1-\alpha)/(2\alpha/q+1-\alpha)<-1,
$$
or equivalently
$$
F(\alpha):=(q-2)\alpha^2-(2q\rho-q+4)\alpha+2q(\rho-1)>0.
$$
Since $F(0)>0$ and $F(1)<0$,  we have 
$$
0<\alpha<\alpha^*(q,\rho) := 1-\frac{\sqrt{((2\rho-3)q+8)^2+24(q-2)}-(2\rho-3)q-8}{2(q-2)}<1.
$$
In conclusion, under the parameter choice \eqref{eqn:thetachoice}, the MSE of $\hat{\mu}$ converges at a nearly $O(n^{-1-\alpha^*(q,\rho)})$ rate with an implied constant independently of the dimension $s$, where
\begin{equation}\label{eq:alphastar}
    \alpha^*(q,\rho)=\begin{dcases}
    \frac{\rho q -2}{\rho q+1}, &\text{ if } q\in (1,2] \\
    1-\frac{\sqrt{((2\rho-3)q+8)^2+24(q-2)}-(2\rho-3)q-8}{2(q-2)}, &\text{ if } q>2
\end{dcases}.
\end{equation}
\end{remark}

\begin{remark}\label{rem:qinfinity}
If $\rho\ge 3/2$, it is clear that
$$
\lim_{q\to \infty} \alpha^*(q,\rho) = 1.
$$
If $\rho\in (1, 3/2)$, we instead have
$$
\lim_{q\to \infty} \alpha^*(q,\rho) = 2\rho-2.
$$
Suppose that $f$ has no significant boundary growth in the sense that $f\in W^{1,q}_{\mix}(\real^s,\varphi)$ for every $q\in (1,\infty)$. If $\rho\ge 3/2$, we should set $\theta_j=0$ and use the usual inversion methods, consistent with the fact that inversion methods attain nearly $O(n^{-2})$ MSEs for such integrands \cite{dick2025quasi}. Otherwise, if $\rho\in (1, 3/2)$, we take $\theta_j=\theta_0 j^{-2\rho/(3-2\rho)}$, attaining nearly $O(n^{-2\rho+1})$ MSE. 
To unify two cases, we conclude that our BDIS attains nearly $O(n^{-1-\min\{2\rho-2,1\}})$ dimension-independent MSE if $f\in W^{1,q}_{\mix}(\real^s,\varphi)$ for every $q\in (1,\infty)$.
    % If $f$ has no significant boundary growth in the sense that $f\in W^{1,q}_{\mix}(\real^s,\varphi)$ for every $q\in (1,\infty)$ and $\Gamma_j=O(j^{-3/2})$ for all sufficiently large $q$, Remark~\ref{rmk:optimalalpha} suggests we should set $\theta_j=0$ and use the usual inversion methods, consistent with the fact that inversion methods attain nearly $O(n^{-2})$ MSEs for such integrands \cite{dick2025quasi}.
\end{remark}

%\subsection{Walsh functions and Walsh decomposition}\label{subsec:walsh}

\section{Proof of Theorem~\ref{thm:qmcvar}}\label{sec:proofs}
We first present some preliminaries for the proof. Assume that $b$ is a prime number. For non-negative integer $k<b^r$ and $u\in[0,1)$, we write their  $b$-adic expansions as $k=\sum_{i=1}^r \kappa_i b^{i-1}$ and $u=\sum_{i=1}^\infty v_i b^{-i}$, respectively.
The $k$-th $b$-adic Walsh function is given by
\begin{equation}\label{eqn:walk}
    {}_b\walk (u)=\exp\left(\frac{2\pi \ii}{b}\sum_{i=1}^r \kappa_i v_i\right),
\end{equation}
where $\ii=\sqrt{-1}$. For the multivariate case, the Walsh functions are naturally defined by the product form
$$
{}_b\walbsk (\bsu) :=\prod_{j=1}^s {}_b\walkj(u_j),\quad \bsk\in \natu_0^s.
$$
Note that $\{{}_b\walbsk (\bsu)\mid \bsk\in \natu_0^s\}$ forms an orthonormal basis of $L^2(\itr^s)$ \cite{dick:pill:2010}. For $f\in L^2(\itr^s)$, we  have the Walsh series expanding
$$
f(\bsu)\sim \sum_{\bsk\in\natu^s} \hat{f}(\bsk){}_b\walbsk (\bsu),
$$
where $f\sim g$ denotes the equivalence relation for the $L^2(\itr^s)$ space, and 
$$\hat{f}(\bsk):=\int_{\itr^s}f(\bsu)\overline{{}_b\walbsk (\bsu)} \rd \bsu$$ denotes the $\bsk$-th Walsh coefficient of $f$.

For $k\in \natu_0$, we denote $\lceil k\rceil=r$ if $k=\sum_{i=1}^r \kappa_i b^{i-1}$ with $\kappa_r\neq 0$ and $\lceil k\rceil=0$ if $k=0$. For $\bsk\in \natu^s_0$, we denote $\supp(\bsk)=\{j\in 1{:}s\mid k_j\neq 0\}$ and $\lceil \bsk\rceil=(\lceil k_1\rceil,\dots,\lceil k_s\rceil)$. For $\bsell\in \natu_0^s$, we let
\begin{equation*}
  \sigma^2_{\bsell}=\sum_{\bsk\in L_{\bsell}} |\hat{f}(\bsk)|^2 \quad\text{for}\quad  L_{\bsell}=\{\bsk\in \natu_0^s\mid \lceil \bsk\rceil=\bsell\}.
\end{equation*}
The next lemma comes from \cite[Lemma 2.7]{pan:2025}, which  actually collected the results in \cite{dick:pill:2010,GODA2023101722,owen96,snxs}.

\begin{lemma}\label{lem:qmcvar}
For $f\in L^2(\itr^s)$ and $\{\bsu_0,\dots,\bsu_{n-1}\}$ a scrambled  $(t,m,s)$-net in base $b\ge 2$ with $n=b^m$,
\begin{equation}\label{eqn:RQMCvar}
    \e\left|\frac{1}{n}\sum_{i=0}^{n-1} f(\bsu_i)-\int_{\itr^s} f(\bsu)\rd \bsu\right|^2=\frac{1}{n}\sum_{\emptyset\neq \omega \subseteq 1{:}s}\sum_{\bsell\in \natu^\omega} \Gamma_{\omega,\bsell} \sigma^2_{\bsell},
\end{equation}
    where $\natu^{v}=\{\bsell\in \natu^s_0\mid \supp(\bsell)=v\}$ and 
    \begin{equation}\label{eqn:gaincoefbound}
        \Gamma_{\omega,\bsell}\leq \Big(\frac{b}{b-1}\Big)^{|\omega|-1}b^{t_\omega}\bsone\{\Vert \bsell\Vert_1> m-t_\omega-|\omega|\}.
    \end{equation}
\end{lemma}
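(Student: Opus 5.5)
The plan is to derive \eqref{eqn:RQMCvar} from the invariance properties of Owen's nested uniform scrambling, and then to bound the gain coefficients $\Gamma_{\omega,\bsell}$ through the $(t_\omega,m,|\omega|)$-net structure of each projection. Since the lemma is attributed to \cite{dick:pill:2010,GODA2023101722,owen96,snxs}, I would follow those arguments and specialize them to the present notation.

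\emph{Step 1: the variance identity.} Write $f=\sum_{\bsell}\beta_{\bsell}$ with $\beta_{\bsell}=\sum_{\bsk\in L_{\bsell}}\hat f(\bsk)\,{}_b\walbsk$. This converges in $L^2(\itr^s)$, and $\Vert\beta_{\bsell}\Vert^2_{L^2}=\sigma^2_{\bsell}$. Under nested uniform scrambling, each scrambled point is marginally uniform, so the estimator is unbiased. The key structural fact is that, for $\bsell\neq\bsell'$, the cross terms $\e[\hat\beta_{\bsell}\overline{\hat\beta_{\bsell'}}]$ vanish, where $\hat\beta_{\bsell}$ is the equal-weight average of $\beta_{\bsell}$ over the points. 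This follows because random digit permutations at level $\ell_j$ annihilate correlations between different digit depths.

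For a fixed $\bsell$, the expectation $\e|\hat\beta_{\bsell}|^2$ depends on a pair of points only through, in each coordinate $j$, the number of leading $b$-adic digits that $u_{i,j}$ and $u_{i',j}$ share. Averaging the Walsh products over the scrambling gives $\e|\hat\beta_{\bsell}|^2=n^{-1}\Gamma_{\omega,\bsell}\sigma^2_{\bsell}$, with $\omega=\supp(\bsell)$. The gain coefficient has the explicit form
$$\Gamma_{\omega,\bsell}=\frac{1}{n}\sum_{i,i'=0}^{n-1}\prod_{j\in\omega}\frac{b\,\bsone\{\text{$u_{i,j},u_{i',j}$ share $\ell_j$ digits}\}-\bsone\{\text{share $\ell_j-1$ digits}\}}{b-1}.$$
Coordinates outside $\omega$ contribute factor $1$. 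Summing over $\bsell$ and grouping by $\omega$ then yields \eqref{eqn:RQMCvar}. Interchanging the infinite sum with the expectation is justified by $L^2$ convergence and the orthogonality just established.

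\emph{Step 2: vanishing of $\Gamma_{\omega,\bsell}$.} Expand the product over $\omega$ by inclusion--exclusion. This writes $\Gamma_{\omega,\bsell}$ as a signed combination of counts of pairs lying in a common elementary box $\prod_{j\in\omega}[a_jb^{-k_j},(a_j+1)b^{-k_j})$ with $k_j\in\{\ell_j-1,\ell_j\}$. If $\Vert\bsell\Vert_1\le m-t_\omega-|\omega|$, then every such box has total depth $\sum_j k_j\le\Vert\bsell\Vert_1\le m-t_\omega$. By the $(t_\omega,m,|\omega|)$-net property of the projection, each such box therefore contains exactly $b^{m-\sum_j k_j}$ points. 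Substituting these counts, the inclusion--exclusion sum telescopes to $0$. Equivalently, this is the statement that the Walsh sums $\frac1n\sum_i{}_b\walbsk(\bsu_i)$ vanish for all such $\bsk$, by the dual-net characterization.

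\emph{Step 3: the uniform bound.} This is the step I expect to be the main obstacle, because one needs the sharp constant $(b/(b-1))^{|\omega|-1}b^{t_\omega}$ rather than Owen's cruder $b^{t}\bigl(\frac{b+1}{b-1}\bigr)^{|\omega|}$. I would first try the argument of \cite{snxs} and \cite{GODA2023101722}. Write $\Gamma_{\omega,\bsell}=\frac1n\sum_{i}\sum_{i'}\prod_j(\cdots)$. For a fixed $i$, group the $i'$ according to the vector $(\min(\text{shared digits},\ell_j))_{j\in\omega}$. Then bound the number of points in each nested box by $b^{t_\omega}\,b^{m-\sum k_j}$, and by $b^{t_\omega}$ when the box is too fine. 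The only positive contributions come from boxes in which the product has an even number of negative factors. Summing the resulting geometric series over the $|\omega|$ coordinates should give $\bigl(\frac{b}{b-1}\bigr)^{|\omega|-1}b^{t_\omega}$, where one factor is saved by pairing the diagonal term $i=i'$ against the full-box count. If this direct route becomes messy, I would instead cite the gain-coefficient bound of \cite[Chapter 13]{dick:pill:2010} and its refinement in \cite{GODA2023101722}. Combining that bound with Step 2 gives \eqref{eqn:gaincoefbound}.
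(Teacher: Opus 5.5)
The paper gives no argument of its own for this lemma. It imports it from \cite[Lemma 2.7]{pan:2025}: the variance identity comes from \cite{owen96,snxs} (in Walsh form, \cite[Theorem 13.6]{dick:pill:2010}) and the constant in \eqref{eqn:gaincoefbound} from \cite{GODA2023101722}.

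Your Steps 1 and 2 correctly reconstruct the first part. Step 2 in fact gives $\Gamma_{\omega,\bsell}=0$ whenever $\Vert\bsell\Vert_1\le m-t_\omega$, which is stronger than the indicator you need. For a non-digital net, justify the Walsh-sum reformulation by exact equidistribution in boxes of total depth at most $m-t_\omega$, rather than by dual nets.

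The gap is the direct route in Step 3. Discarding the terms with an odd number of factors $-1/(b-1)$ loses exactly the cancellation that produces $(b/(b-1))^{|\omega|-1}$, no matter how precisely you count. Take any $(0,2,3)$-net in base $3$ and $\bsell=(1,1,1)$. Counting $i'=i$, each $\bsu_i$ has one partner agreeing in all three first digits, none agreeing in exactly two, six agreeing in exactly one, and two agreeing in none. The even terms alone give $1+6/4=5/2$. But $\Gamma_{\omega,\bsell}=1+6/4-2/8=9/4$, which is exactly the target $b^{t_\omega}(b/(b-1))^{|\omega|-1}$.

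So as written, the sharp constant rests only on your fallback citation, which is what the paper does. Note that \cite{GODA2023101722} is formulated for digital nets. That suffices for Theorem~\ref{thm:qmcvar}, but not for the general $(t,m,s)$-nets in the lemma's wording.

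The missing idea, which makes Step 3 self-contained for arbitrary nets, is to write the gain coefficient with characters and use Parseval instead of counting. Let $\boldsymbol{d}_i\in\{0,\dots,b-1\}^{\omega}$ be the $\ell_j$-th digits of $\bsu_i$. Since $\frac{b\bsone\{d=d'\}-1}{b-1}=\frac{1}{b-1}\sum_{c=1}^{b-1}e^{2\pi\ii c(d-d')/b}$, your formula becomes
\[
\Gamma_{\omega,\bsell}=\frac{1}{n(b-1)^{|\omega|}}\sum_{B}\sum_{\boldsymbol{c}\in\{1,\dots,b-1\}^{\omega}}\Big|\sum_{i:\,\bsu_{i,\omega}\in B}\exp\big(2\pi\ii\,\boldsymbol{c}\cdot\boldsymbol{d}_i/b\big)\Big|^2,
\]
where $B$ runs over the $b^{\Vert\bsell\Vert_1-|\omega|}$ elementary boxes in $[0,1)^{\omega}$ with depths $\ell_j-1$. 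By Step 2 you may assume $k:=\Vert\bsell\Vert_1-m+t_\omega\ge1$.

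If $k\ge|\omega|$, every $B$ holds at most $b^{t_\omega}$ points, and the trivial bound gives $\Gamma_{\omega,\bsell}\le b^{t_\omega}$.

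Otherwise, fix $v\subseteq\omega$ with $|v|=|\omega|-k$. Enlarge the range of $\boldsymbol{c}_v$ to $\{0,\dots,b-1\}^{v}$ and apply Parseval in $\boldsymbol{c}_v$. The inner sum is then at most $(b-1)^k b^{|v|}\sum_{\boldsymbol{d}_v}N_{B,\boldsymbol{d}_v}^2$. Here $N_{B,\boldsymbol{d}_v}$ counts the points of a sub-box of total depth exactly $m-t_\omega$, and hence equals $b^{t_\omega}$.

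Summing over $B$ gives $\Gamma_{\omega,\bsell}\le b^{t_\omega}(b/(b-1))^{|\omega|-k}\le b^{t_\omega}(b/(b-1))^{|\omega|-1}$. Together with Step 2, this yields \eqref{eqn:gaincoefbound}.
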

% \begin{proof}
%     Equation~\eqref{eqn:RQMCvar} is first derived by \cite{owen96,snxs} in terms of Haar wavelet basis. See \cite[Theorem 13.6]{dick:pill:2010} for the version using Walsh basis. Inequality~\eqref{eqn:gaincoefbound} comes from \cite{GODA2023101722}.
% \end{proof}
    
We next divide the proof of Theorem~\ref{thm:qmcvar} into three steps. First, we bound the $W^{1,1}_{\mix}(\itr^s)$-norm of $f^w$ when $f\in W^{1,q}_{\mix}(\real^s,\varphi)$ for $q>1$. Next, we bound the variance component $\sigma^2_{\bsell}$ for $f^w$. Finally, we apply Lemma~\ref{lem:qmcvar} to prove the main theorem.

\subsection{Step I: Norms of $f^w$}

The next theorem is crucial for characterizing the dependence of $\Vert f^w \Vert_{W^{1,1}_{\mix}(\itr^s)}$ on $\theta_j,j\in 1{:}s$.

\begin{theorem}\label{thm:Wqcase}
    Given $\varphi$ satisfying Assumption~\ref{assump:rho}, $f\in W^{1,q}_{\mix}(\real^s,\varphi)$ for $q>1$, $w_j=w_{\theta_j}$  for $\{\theta_j\mid j\in 1{:}s\}\subseteq (0,1/2]$ and $\varepsilon>0$, we have
    $$\Vert f^w \Vert_{W^{1,1}_{\mix}(\itr^s)}\leq  \Vert f \Vert_{W^{1,q}_{\mix}(\real^s,\varphi)}\prod_{j=1}^sC_{\varepsilon,p,q}\theta^{-1/q-\varepsilon}_j,$$
    where $C_{\varepsilon,p,q}$ is a constant depending on $\varepsilon,p$ and $q$.
\end{theorem}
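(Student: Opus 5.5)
We need $\Vert f^w\Vert_{W^{1,1}_{\mix}(\itr^s)}$, i.e. for each $v\subseteq 1{:}s$ a bound on $\int_{\itr^s}|\partial^v f^w(\bsu)|\rd\bsu$. Since $w$ and $T$ are of product form, the product and chain rules give
$$\partial^v f^w(\bsu)=\sum_{v'\subseteq v}\Big(\prod_{j\in v\setminus v'} w_j'(u_j)\prod_{j\in v'} w_j(u_j)T_j'(u_j)\prod_{j\notin v}w_j(u_j)\Big)(\partial^{v'}f)\circ T(\bsu).$$
For each $v'$ we change variables $\bsx=T(\bsu)$ in the coordinates $j\in v'\cup v^c$, using $w_j(u_j)\rd u_j=\varphi(x_j)\rd x_j$. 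This produces integrals against $\prod\varphi$ for those coordinates and leaves factors $w_j'(u_j)\rd u_j$ for $j\in v\setminus v'$, together with the extra factors $T_j'(u_j)$ for $j\in v'$.

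Next we apply H\"older's inequality with exponents $q$ and $q'=q/(q-1)$. We write each integrand as $|\partial^{v'}f\circ T|\prod_j w_j^{1/q}$ times the remaining one-dimensional factors divided by $w_j^{1/q}$. The $L^q$ part then becomes exactly $\Vert\partial^{v'}f\Vert_{L^q(\real^s,\varphi)}$, after the change of variables in all coordinates. The remaining part factorizes into one-dimensional integrals:
\begin{itemize}
\item for $j\notin v$, $\int_0^1 w_j\rd u=1$;
\item for $j\in v'$, $\big(\int_0^1 (w_jT_j')^{q'} w_j^{-q'/q}\rd u\big)^{1/q'}=\big(\int_0^1 w_j(T_j')^{q'}\rd u\big)^{1/q'}$;
\item for $j\in v\setminus v'$, $\big(\int_0^1 |w_j'|^{q'}w_j^{-q'/q}\rd u\big)^{1/q'}$.
\end{itemize}
Summing over $v'$ and $v$ and using H\"older (or $\ell^1\le 2^s\ell^q$-type counting) on the sum of the $\Vert\partial^{v'}f\Vert$ recovers $\Vert f\Vert_{W^{1,q}_{\mix}}$, at the cost of constants $2^{s}$ that are absorbed into $\prod_j C_{\varepsilon,p,q}$.

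It therefore remains to prove two one-dimensional estimates, and this is the main obstacle:
$$\int_0^1 w_\theta(T_\theta')^{q'}\rd u\lesssim \theta^{-q'/q-\varepsilon q'},\qquad \int_0^1 |w_\theta'|^{q'}w_\theta^{-q'/q}\rd u\lesssim\theta^{-q'/q-\varepsilon q'}.$$
For the first, we use $T'=w/\varphi(T)$, so the integrand equals $w^{1+q'}/\varphi(T)^{q'}$. By symmetry we work on $(0,1/2)$, where $T(u)\le 0$ and $\Phi(T(u))=\int_0^u w$. Assumption~\ref{assump:rho} then gives $\varphi(T)\ge c_\varepsilon(\int_0^u w)^{1+\varepsilon}$.

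On $[\theta,1/2]$ we have $w=(1-\theta)^{-1}$ and $\int_0^u w\gtrsim u$, so the integral is bounded by $\int_\theta u^{-q'(1+\varepsilon)}\rd u\lesssim\theta^{1-q'(1+\varepsilon)}$. Since $1-q'=-q'/q$, this is $\theta^{-q'/q-\varepsilon q'}$, as required.

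On $(0,\theta)$ we substitute $u=\theta t$. The map $\eta_p$ is chosen so that $\eta(t)/(\int_0^t\eta)^{1+\varepsilon}$ stays controlled: $\int_0^t\eta=\exp(2^p-t^{-p})/(4p)$ exactly, hence $\eta/(\int_0^t\eta)^{1+\varepsilon}\sim t^{-p-1}e^{-\varepsilon(2^p-t^{-p})}$, which is bounded. The result is again $O(\theta^{1-q'(1+\varepsilon)})$; the middle piece $(\theta/2,\theta)$, where $w$ is bounded and $\int_0^u w\gtrsim\theta$, is treated similarly.

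For the second estimate, $w'$ vanishes on $[\theta,1/2]$. On $(0,\theta)$ scaling gives $w'=\theta^{-1}(1-\theta)^{-1}\eta'(t)$, so the integral is $\theta^{1-q'}\int_0^1|\eta'|^{q'}\eta^{-q'/q}\rd t$ up to constants. This last integral is finite, because near $0$ we have $|\eta'|\lesssim t^{-2p-2}\eta$ and $\eta^{q'-q'/q}=\eta$ decays superexponentially. Since $1-q'=-q'/q$, this gives the required power of $\theta$. Taking $1/q'$ powers yields $\theta^{-1/q-\varepsilon}$ per coordinate, which completes the plan.
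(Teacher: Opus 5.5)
Your route is the paper's route. You use the same product/chain-rule formula for $\partial^v f^w$, H\"older with exponents $q,q'$ after splitting off $\prod_j w_j^{1/q}$, and you reduce to the paper's one-dimensional integrals $I_1(\theta,q)^{q'}=\int_0^1 w_\theta^{1+q'}(\varphi\circ T_\theta)^{-q'}\rd u$ and $I_2(\theta,q)^{q'}=\int_0^1 |w_\theta'|^{q'}w_\theta^{1-q'}\rd u$. The lower bound on $\varphi\circ T_\theta$ comes from Assumption~\ref{assump:rho}, with the split at $u=\theta$. The pieces on $[\theta,1/2]$ and $(\theta/2,\theta)$, the $w'$ integral and the final summation are fine. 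Your $\int_0^1 w_j\rd u=1$ for $j\notin v$ is slightly cleaner than the paper's factor $2^{s-|v|}$. Your $I_2$ argument, via $|\eta_p'|\lesssim t^{-2p-2}\eta_p$ and $q'-q'/q=1$, is sound. It does not rely on a pointwise bound $|w_\theta'|\lesssim\theta^{-1}w_\theta$, which could not hold near $u=0$ since $|\eta_p'|/\eta_p\sim p\,t^{-p-1}$.

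The step that fails is the piece $(0,\theta/2)$ of the first estimate. Write $E(t)=\int_0^t\eta_p=\frac{2^{-p-2}}{p}e^{2^p-t^{-p}}=\frac{t^{p+1}}{p}\eta_p(t)$; your constant $1/(4p)$ is off by $2^{-p}$, which is harmless. Then
\[
\frac{\eta_p(t)}{E(t)^{1+\varepsilon}}=p\,t^{-p-1}E(t)^{-\varepsilon}\asymp t^{-p-1}e^{\varepsilon(t^{-p}-2^p)}\longrightarrow\infty\quad (t\to 0^+).
\]
Your expression $t^{-p-1}e^{-\varepsilon(2^p-t^{-p})}$ is right, but its exponent tends to $+\infty$, so it is not bounded. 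Hence you cannot dominate the rescaled integrand $\eta_p\,(\eta_p/E^{1+\varepsilon})^{q'}$ by a multiple of $\eta_p$. The correct bookkeeping is
\[
\eta_p(t)^{1+q'}E(t)^{-q'(1+\varepsilon)}=(p\,t^{-p-1})^{1+q'}E(t)^{1-q'\varepsilon}\asymp t^{-(p+1)(1+q')}\exp\bigl((1-q'\varepsilon)(2^p-t^{-p})\bigr).
\]
This majorant is integrable near $0$ if and only if $q'\varepsilon<1$; for $q'\varepsilon\ge 1$ it is $\gtrsim t^{-(p+1)(1+q')}$ and not integrable. So the $\varepsilon$-loss from Assumption~\ref{assump:rho} must be paid for by the surplus power of $w$ (exponent $1+q'$ against $q'(1+\varepsilon)$). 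That requires a restriction on $\varepsilon$ which your argument never imposes.

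The repair is easy. Since $\theta_j\le 1/2<1$, $\theta_j^{-1/q-\varepsilon}$ is increasing in $\varepsilon$, so it suffices to prove the theorem for $\varepsilon<1-1/q=1/q'$. Then the factor $e^{-(1-q'\varepsilon)t^{-p}}$ kills every polynomial factor, and this piece is $O(\theta^{1-q'(1+\varepsilon)})$ as you claimed. The paper's computation, which ends with the factor $w_\theta(\theta u)^{1-q^*\varepsilon}$, uses the same restriction tacitly.
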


\begin{proof}
    A straightforward calculation shows
   \begin{align*}
        \partial^{v} f^w (\bsu)= \Big(\prod_{j\in 1{:}s\setminus v} w_j(u_j)\Big)\sum_{v'\subseteq v}(\partial^{v'} f) \circ T(\bsu) \prod_{j\in v'} \frac{w_j(u_j)^{2}}{\varphi\circ T_j(u_j)}\prod_{j\in v\setminus v'} w'_j(u_j).
    \end{align*} 
    By Hölder's inequality and $\sup_{u\in \itr} w_\theta(u)\leq 2$ for $\theta\in (0,1/2]$,
    \begin{align}\label{eqn:I1I2}
   &\Vert\partial^{v} f^w \Vert_{L^1(\itr^s)} \\
    \leq & 2^{s-|v|}\sum_{v'\subseteq v}\int_{\itr^{s}}|(\partial^{v'} f) \circ T(\bsu)| w(\bsu)^{1/q}\prod_{j\in v'} \frac{w_j(u_j)^{2-1/q}}{\varphi\circ T_j(u_j)}\prod_{j\in v\setminus v'} \frac{|w'_j(u_j)|}{w_j(u_j)^{1/q}} \rd \bsu \nonumber\\
   \leq & 2^{s-|v|}\sum_{v'\subseteq v}\Big(\int_{\itr^s}|(\partial^{v'} f) \circ T(\bsu)|^q w(\bsu)\rd \bsu\Big)^{1/q} \prod_{j\in v'} I_1(\theta_j,q)\prod_{j\in v\setminus v'} I_2(\theta_j,q) \nonumber,
   \end{align}
   where, by denoting $q^*=q/(q-1)$,
   $$I_1(\theta,q)=\Big(\int_{\itr}\frac{w_\theta(u)^{q^*+1}}{(\varphi\circ T_\theta(u))^{q^*}}\rd u\Big)^{1/q^*}$$
   and
   $$ I_2(\theta,q)=\Big(\int_{\itr}\frac{|w'_\theta(u)|^{q^*}}{w_\theta(u)^{q^*-1}}\rd u\Big)^{1/q^*}.$$
   First, we bound $I_1(\theta,q)$. By \cite[Lemma 4.3]{pan:2025}, for $u\in (0,\theta)$,
  $$\varphi\circ T_\theta(u)\geq 
       c_{\varepsilon,p} \Big(  \theta w_\theta(u)\min\big((2u/\theta)^{p+1},1 \big)\Big)^{1+\varepsilon}\geq c_{\varepsilon,p} \Big(\theta w_\theta(u) (u/\theta)^{p+1} \Big)^{1+\varepsilon},$$
where $c_{\varepsilon,p}$ is a constant depending on $\varepsilon$ and $p$. When $u\in [\theta,1/2]$, we use Assumption~\ref{assump:rho} to bound
       $$\varphi\circ T_\theta(u)\geq c_\varepsilon\Big(\Phi\circ\Phi^{-1}\big(\int_0^u w_\theta(u')\rd u'\big) \Big)^{1+\varepsilon}=c_\varepsilon(u-\theta/2)^{1+\varepsilon}.$$
       Therefore, 
       \begin{align*}
       I_1(\theta,q)=&\Big(2\int_0^{1/2}\frac{w_\theta(u)^{q^*+1}}{(\varphi\circ T_\theta(u))^{q^*}}\rd u\Big)^{1/q^*}\\
       \leq & \Big(2\int_{0}^{\theta}\frac{w_\theta(u)^{q^*+1}}{ c^{q^*}_{\varepsilon,p}(\theta w_\theta(u) (u/\theta)^{p+1})^{q^*(1+\varepsilon)}}\rd u+2\int_\theta^{1/2} \frac{(1-\theta)^{-q^*-1}}{c^{q^*}_\varepsilon (u-\theta/2)^{q^*(1+\varepsilon)}}\rd u\Big)^{1/q^*}\\
       \leq &\Big(2\theta^{1-q^*(1+\varepsilon)}\int_{0}^{1}\frac{w_\theta(\theta u)^{1-q^*\varepsilon}}{ c^{q^*}_{\varepsilon,p}  u^{q^*(p+1)(1+\varepsilon)}}\rd u+2\int_{\theta/2}^{\infty} \frac{2^{q^*+1}}{c^{q^*}_\varepsilon u^{q^*(1+\varepsilon)}}\rd u\Big)^{1/q^*}\\  
       \leq & C_{1} \theta^{-1/q-\varepsilon},   
       \end{align*}
       where $C_1$ depends on $\varepsilon,p$ and $q$, and we have used $w_\theta(\theta u)= 2^{-p-2} u^{-p-1}\exp(2^p-u^{-p})$ over $u\in (0,1/2)$ in the last inequality.
       
       Next, we bound $I_2(\theta,q)$. By \cite[Lemma 4.4]{pan:2025}, 
       $$|w'_\theta(u)|\leq c_p\theta^{-1}w_\theta(u)$$ for $u\in (0,\theta]$ and a constant $c_p$ depending on $p$. Hence,
       \begin{align*}
       I_2(\theta,q) = \Big(2\int_{0}^\theta\frac{|w'_\theta(u)|^{q^*}}{w_\theta(u)^{q^*-1}}\rd u\Big)^{1/q^*}
       \leq & \Big(2c_p^{q^*}\theta^{-q^*}\int_{0}^\theta w_\theta(u)\rd u\Big)^{1/q^*}\\
       = & \Big(2c_p^{q^*}\theta^{-q^*+1}\int_{0}^1 w_\theta(\theta u)\rd u\Big)^{1/q^*}
       \leq   C_2 \theta^{-1/q},
       \end{align*}
       where $C_2$ depends on $p$ and $q$. Putting the above bounds into equation~\eqref{eqn:I1I2} and substituting
       $$\int_{\itr^s}|(\partial^{v'} f) \circ T(\bsu)|^q w(\bsu)\rd \bsu=\int_{\itr^s}|(\partial^{v'} f) \circ T(\bsu)|^q\prod_{j=1}^s \varphi\circ T_j(u_j)  \rd T(\bsu)=\Vert \partial^{v'} f\Vert^q_{L^q(\real^s,\varphi)},$$
       we can bound
              \begin{align*}
        &\Vert\partial^{v} f^w \Vert_{L^1(\itr^s)}\\ 
    \leq & 2^{s-|v|}\sum_{v'\subseteq v}\Vert \partial^{v'} f\Vert_{L^q(\real^s,\varphi)} \prod_{j\in v'} C_{1} \theta_j^{-1/q-\varepsilon} \prod_{j\in v\setminus v'} C_2 \theta_j^{-1/q}    \\
    \leq  & 2^{s-|v|}\Big(\sum_{v'\subseteq v}\Vert \partial^{v'} f\Vert^q_{L^q(\real^s,\varphi)} \Big)^{1/q}\Big(\sum_{v'\subseteq v}\prod_{j\in v'} C^{q^*}_{1}\prod_{j\in v\setminus v'}C^{q^*}_{2}\Big)^{1/q^*}\prod_{j\in v} \theta_j^{-1/q-\varepsilon} \\
    \leq & 2^{s-|v|}\Vert f \Vert_{W^{1,q}_{\mix}(\real^s,\varphi)}\prod_{j\in v} (C^{q^*}_{1}+C^{q^*}_{2})^{1/q^*}\theta_j^{-1/q-\varepsilon} .
       \end{align*}
       Finally,
       \begin{align*}
        \Vert f^w \Vert_{W^{1,1}_{\mix}(\itr^s)}=&\sum_{v\subseteq 1{:}s}\Vert\partial^{v} f^w \Vert_{L^1(\itr^s)}  \\
        \leq & \Vert f \Vert_{W^{1,q}_{\mix}(\real^s,\varphi)} \sum_{v\subseteq 1{:}s}2^{s-|v|}\prod_{j\in v} (C^{q^*}_{1}+C^{q^*}_{2})^{1/q^*}\theta_j^{-1/q-\varepsilon} \\
        \leq &  \Vert f \Vert_{W^{1,q}_{\mix}(\real^s,\varphi)} \prod_{j=1}^s\Big((C^{q^*}_{1}+C^{q^*}_{2})^{1/q^*}+2\Big) \theta_j^{-1/q-\varepsilon}.
       \end{align*}
\end{proof}

\begin{remark}
    We can further generalize Theorem~\ref{thm:Wqcase} by using \cite[Theorem 4.5]{pan:2025} to bound $\Vert f^w \Vert_{W^{1,q}_{\mix}(\itr^s)}$  and applying the interpolation inequality \cite[Theorem 2.11]{adams2003sobolev} to bound $\Vert f^w \Vert_{W^{1,q'}_{\mix}(\itr^s)}$ for $q'\in [1,q]$.
\end{remark}

\subsection{Step II: Bounds on $\sigma^2_{\bsell}$}

We need the following two lemmas:

\begin{lemma}\label{lem:fkbound}
For $\bsk\in \natu^s$ and $g\in W^{1,1}_{\mix}(\itr^s)$,
$$|\hat{g}(\bsk)|\leq  \Vert\partial^{1{:}s}g\Vert_{L^1(\itr^s)}\prod_{j=1}^s b^{-\lceil k_j\rceil+1}.$$
\end{lemma}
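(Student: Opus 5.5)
The plan is to reduce the statement to a one-dimensional estimate applied coordinatewise, using the product structure of the multivariate Walsh functions and the fact that every $k_j\neq 0$.

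\textbf{Step 1 (one-dimensional identity).} For $k\in\natu$ with $\lceil k\rceil=r\ge 1$, set
$$J_k(u)=\int_0^u \overline{{}_b\walk(t)}\rd t .$$
The function ${}_b\walk$ is constant on the intervals $[a b^{-r},(a+1)b^{-r})$. On each interval $[c\,b^{-r+1},(c+1)b^{-r+1})$, the digit $v_r$ ranges over all residues mod $b$ while the other digits $v_1,\dots,v_{r-1}$ stay fixed. Since $\kappa_r\neq 0$ and $b$ is prime, the factor $\exp(2\pi\ii\kappa_r v_r/b)$ runs over all $b$-th roots of unity and sums to zero. Hence $J_k$ vanishes at every point $c\,b^{-r+1}$, in particular at $u=0$ and $u=1$. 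Within one such interval, $|J_k(u)|$ is at most the interval length $b^{-r+1}$, so
$$\sup_{u\in[0,1]}|J_k(u)|\le b^{-\lceil k\rceil+1}.$$

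\textbf{Step 2 (integration by parts in each coordinate).} For smooth $g$ we have
$$\hat g(\bsk)=\int_{\itr^s} g(\bsu)\prod_{j=1}^s \overline{{}_b\walkj(u_j)}\rd\bsu .$$
Integrate by parts in $u_1$, using $J_{k_1}$ as the antiderivative. The boundary terms vanish because $J_{k_1}(0)=J_{k_1}(1)=0$, which leaves
$$-\int \partial^{\{1\}}g\, J_{k_1}(u_1)\prod_{j\ge2}\overline{{}_b\walkj(u_j)}\rd\bsu .$$
Repeating this for $j=2,\dots,s$ gives
$$\hat g(\bsk)=(-1)^s\int_{\itr^s}\partial^{1{:}s}g(\bsu)\prod_{j=1}^s J_{k_j}(u_j)\rd\bsu .$$
Taking absolute values and applying Step 1 yields $\Vert\partial^{1{:}s}g\Vert_{L^1(\itr^s)}\prod_{j=1}^s b^{-\lceil k_j\rceil+1}$.

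\textbf{Step 3 (extension to $W^{1,1}_{\mix}(\itr^s)$).} The main technical point is justifying Step 2 for a general $g\in W^{1,1}_{\mix}(\itr^s)$ rather than a smooth one. I would use density: smooth functions on $[0,1]^s$ are dense in $W^{1,1}_{\mix}(\itr^s)$, for instance by extension and mollification of the tensor-product structure. By Lemma~\ref{lem:Sobolevimbedding}, convergence in $W^{1,1}_{\mix}$ implies uniform convergence, so $\hat g(\bsk)$ is continuous under this limit. The right-hand side is continuous in the $L^1$ norm of $\partial^{1{:}s}g$. Both sides therefore pass to the limit.

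Alternatively, one can avoid the density argument. Write $g$ via the multivariate fundamental theorem of calculus, $g(\bsu)=\sum_{v}\int_{[\bszero,\bsu_v]}\partial^v g(\bsy_v,\bszero_{-v})\rd\bsy_v$, which is the representation underlying Lemma~\ref{lem:Sobolevimbedding}. Every term with $v\neq 1{:}s$ is constant in some $u_j$, and since $k_j\ne0$ it integrates to zero against ${}_b\walkj$. The remaining term, after Fubini, is exactly $\int\partial^{1{:}s}g(\bsy)\prod_j \int_{y_j}^1\overline{{}_b\walkj}\rd u_j\rd\bsy$. Its inner factors equal $-J_{k_j}(y_j)$ by Step 1, giving the same bound.

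I would present this second route, since it only requires Fubini.
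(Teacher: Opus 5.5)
Your proposal is correct and follows the paper's route. The paper likewise writes $\hat g(\bsk)$ as the integral of $\partial^{1{:}s}g$ against $\prod_j\overline{\mathcal{I}({}_b\walkj)(u_j)}$, citing \cite{pan:2025} for this identity. It then bounds $\Vert\mathcal{I}({}_b\walk)\Vert_{L^\infty(\itr)}\le b^{-\lceil k\rceil+1}$ via the factorization $\mathcal{I}({}_b\walk)={}_b\mathrm{wal}_{\mathcal{M}(k)}\,\mathcal{I}({}_b\mathrm{wal}_{\mathcal{N}(k)})$, which is your vanishing-on-the-$b^{-r+1}$-grid argument in another form, and concludes with H\"older's inequality.

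One caution: your preferred second route is not ``Fubini only.'' For a merely weakly differentiable $g$, the anchored representation with face traces $\partial^v g(\bsy_v,\bszero_{-v})$ must itself be justified, for example by your density argument. A shorter way is to approximate the Lipschitz function $\prod_j J_{k_j}(u_j)$, which vanishes on $\partial\itr^s$, by $C_c^\infty(\itr^s)$ test functions in the definition of the weak derivative $\partial^{1{:}s}g$; this needs only $g,\partial^{1{:}s}g\in L^1(\itr^s)$.
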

\begin{proof}
By \cite[proof of Lemma 3.3]{pan:2025},
$$\hat{g}(\bsk)=\int_{\itr^s}\partial^{1{:}s}g(\bsu) \prod_{j=1}^s \overline{ \mathcal{I}({}_b\walkj)(u_j)} \rd \bsu, $$
where 
$$\mathcal{I}({}_b\walk)(u)=\int_0^u {}_b\walk (t)\rd t.$$
As shown in \cite[Example 3.4]{pan:2025}, if $\kappa_r b^{r-1}\leq k < (\kappa_r+1) b^{r-1}$ for $r=\lceil k\rceil$, 
$$\mathcal{I}({}_b\walk)={}_b\mathrm{wal}_{\mathcal{M}(k)} \mathcal{I}({}_b\mathrm{wal}_{\mathcal{N}(k)})$$
for $\mathcal{M}(k)=k-\kappa_r b^{r-1}$ and $\mathcal{N}(k)=\kappa_r b^{r-1}$, and 
$\Vert \mathcal{I}({}_b\mathrm{wal}_{\mathcal{N}(k)})\Vert_{L^\infty(\itr)}\leq b^{-r+1}.$ 
Finally, we apply Hölder's inequality and conclude
$$|\hat{g}(\bsk)|\leq \Vert\partial^{1{:}s}g\Vert_{L^1(\itr^s)} \prod_{j=1}^s \Vert \mathcal{I}({}_b\walkj)(u_j)\Vert_{L^\infty(\itr)}\leq \Vert\partial^{1{:}s}g\Vert_{L^1(\itr^s)} \prod_{j=1}^s b^{-\lceil k_j\rceil+1}. $$
\end{proof}

\begin{remark}
    A similar bound is derived in \cite{SUZUKI20161} given $\partial^{1{:}s}g$ is continuous. Our lemma only requires $\partial^{1{:}s}g$ to exist weakly.
\end{remark}

\begin{lemma}\label{lem:sigmabound}
   For $\alpha\in (0,1)$, $\bsell\in \natu^s$ and $g\in W^{1,1}_{\mix}(\itr^s)$,
   $$\sum_{\bsk\in L_{\bsell}} |\hat{g}(\bsk)|^2 \leq C^s_{b,\alpha} b^{-\alpha\Vert \bsell\Vert_1}\Vert g \Vert^{2-2\alpha}_{L^2(\itr^s)}\Vert\partial^{1{:}s}g\Vert^{2\alpha}_{L^1(\itr^s)}, $$
   where $L_{\bsell}=\{\bsk\in \natu_0^s\mid \lceil \bsk\rceil=\bsell\}$ and $C_{b,\alpha}$ is a constant depending on $b$ and $\alpha$.
\end{lemma}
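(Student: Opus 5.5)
The plan is to interpolate between two elementary bounds on $\sigma^2_{\bsell}:=\sum_{\bsk\in L_{\bsell}}|\hat g(\bsk)|^2$. The first is the trivial $L^2$ bound from Parseval:
$$\sigma^2_{\bsell}\le \sum_{\bsk\in\natu_0^s}|\hat g(\bsk)|^2=\Vert g\Vert^2_{L^2(\itr^s)}.$$
The second uses Lemma~\ref{lem:fkbound} and counts how many $\bsk$ lie in $L_{\bsell}$. Since $\bsell\in\natu^s$, every $\bsk\in L_{\bsell}$ has all components nonzero, so Lemma~\ref{lem:fkbound} applies.

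For each $j$, the number of $k_j$ with $\lceil k_j\rceil=\ell_j$ is $(b-1)b^{\ell_j-1}\le b^{\ell_j}$, so $|L_{\bsell}|\le b^{\Vert\bsell\Vert_1}$. Squaring the coefficient bound of Lemma~\ref{lem:fkbound} and summing gives
$$\sigma^2_{\bsell}\le |L_{\bsell}|\,\Vert\partial^{1{:}s}g\Vert^2_{L^1(\itr^s)}\prod_{j=1}^s b^{-2\ell_j+2}\le b^{2s}\,b^{-\Vert\bsell\Vert_1}\,\Vert\partial^{1{:}s}g\Vert^2_{L^1(\itr^s)}.$$

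Taking the geometric mean of the two bounds with exponents $1-\alpha$ and $\alpha$ (that is, $\sigma^2_{\bsell}=(\sigma^2_{\bsell})^{1-\alpha}(\sigma^2_{\bsell})^{\alpha}$) yields
$$\sigma^2_{\bsell}\le b^{2\alpha s}\,b^{-\alpha\Vert\bsell\Vert_1}\,\Vert g\Vert^{2-2\alpha}_{L^2(\itr^s)}\Vert\partial^{1{:}s}g\Vert^{2\alpha}_{L^1(\itr^s)}.$$
This is the claim with $C_{b,\alpha}=b^{2\alpha}$.

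There is no real obstacle here. The only points to check are the counting $|L_{\bsell}|\le b^{\Vert\bsell\Vert_1}$ and that Parseval is legitimate. Parseval holds because $g\in W^{1,1}_{\mix}(\itr^s)$ is bounded by Lemma~\ref{lem:Sobolevimbedding}, hence $g\in L^2(\itr^s)$ and its Walsh expansion is well defined. If $\Vert g\Vert_{L^2(\itr^s)}$ were infinite the bound would be vacuous, but this case does not arise.
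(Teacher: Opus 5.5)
Your proof is correct and follows essentially the same route as the paper. Both arguments bound each coefficient via Lemma~\ref{lem:fkbound}, count the elements of $L_{\bsell}$, use Bessel's (or Parseval's) inequality for the $L^2$ bound, and combine the two bounds as a weighted geometric mean with exponents $1-\alpha$ and $\alpha$. Your cruder count $|L_{\bsell}|\le b^{\Vert\bsell\Vert_1}$ only changes the constant to $C_{b,\alpha}=b^{2\alpha}$ instead of $(b(b-1))^{\alpha}$, which is immaterial.
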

\begin{proof}
    We apply Lemma~\ref{lem:fkbound} to each $\bsk\in L_{\bsell}$ and obtain
    $$|\hat{g}(\bsk)|^2\leq b^{-2\Vert \bsell\Vert_1+2s}\Vert\partial^{1{:}s}g\Vert^2_{L^1(\itr^s)}.$$
    Because $|L_{\bsell}|=\prod_{j=1}^s (b-1)b^{\ell_j-1}$,
     $$\sum_{\bsk\in L_{\bsell}} |\hat{g}(\bsk)|^2\leq |L_{\bsell}|b^{-2\Vert \bsell\Vert_1+2s}\Vert\partial^{1{:}s}g\Vert^2_{L^1(\itr^s)}= b^s (b-1)^sb^{-\Vert \bsell\Vert_1}\Vert\partial^{1{:}s}g\Vert^2_{L^1(\itr^s)}.$$
     Meanwhile, $g\in L^2(\itr^s)$ by Lemma~\ref{lem:Sobolevimbedding} and $g\in W^{1,1}_{\mix}(\itr^s)$. From Bessel's inequality,
     $$\sum_{\bsk\in L_{\bsell}} |\hat{g}(\bsk)|^2\leq \Vert g \Vert^2_{L^2(\itr^s)}.$$
The conclusion follows by combining the two bounds.
\end{proof}

\begin{theorem}\label{thm:sigmabound}
Given  $\alpha\in (0,1)$, $\bsell\in \natu_0^{s}\setminus\{\bszero\}$, $f\in W^{1,q}_{\mix}(\real^s,\varphi)$ for $q>1$, $w_j=w_{\theta_j}$  for $\{\theta_j\mid j\in 1{:}s\}\subseteq (0,1/2]$ and $\varepsilon>0$, we have
$$       \sigma^2_{\bsell}=\sum_{\bsk\in L_{\bsell}} |\hat{f}^w(\bsk)|^2
        \leq 
        C_{\varepsilon,p,q,b,\alpha}^{|\supp(\bsell)|} b^{-\alpha\Vert \bsell\Vert_1}\sum_{v\subseteq\supp(\bsell)}\Vert f_v \Vert^2_{W^{1,q}_{\mix}(\real^s,\varphi)}\prod_{j\in v} \theta^{-\nu_{\alpha,q}-\varepsilon}_j\prod_{j\in \supp(\bsell)\setminus v} \theta^{1-\alpha}_j,$$
    where $L_{\bsell}=\{\bsk\in \natu_0^s\mid \lceil \bsk\rceil=\bsell\}$, $\nu_{\alpha,q}=\max(2\alpha/q,2/q+\alpha-1)$ and  $C_{\varepsilon,p,q,b,\alpha}$ is a constant depending on $\varepsilon,p,q, b$ and $\alpha$.
\end{theorem}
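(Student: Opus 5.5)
Let $u=\supp(\bsell)$. The plan is to decompose $f^w$ through the ANOVA decomposition of $f$, show that only the ANOVA terms indexed by $v\subseteq u$ contribute to $\sigma^2_{\bsell}$, and bound each surviving contribution. The bound comes from Lemma~\ref{lem:sigmabound} applied to a $|u|$-dimensional function, combined with Theorem~\ref{thm:Wqcase}.

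\textbf{Reduction to $v\subseteq u$.} Write $f^w=\sum_{v\subseteq 1{:}s} f_v^w$ with $f_v^w=w\cdot f_v\circ T$. Since $f_v$ depends only on $\bsx_v$ and $w$ has product form,
$$f_v^w(\bsu)=\Big(\prod_{j\in v} w_j(u_j)\,\Big)\, f_v\circ T_v(\bsu_v)\cdot\prod_{j\notin v}w_j(u_j).$$
For $\bsk\in L_{\bsell}$ with $k_j=0$ for $j\notin u$, the Walsh coefficient factorizes. The coordinates outside $v$ contribute $\prod_{j\in u\setminus v}\widehat{w_j}(k_j)\prod_{j\notin u\cup v}\widehat{w_j}(0)$, and $\widehat{w_j}(0)=\int w_j=1$.

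If $v\not\subseteq u$, pick $j\in v\setminus u$. Integrating $f_v^w$ in $u_j$ against $\mathrm{wal}_0=1$ gives $\int f_v(\bsx)\varphi(x_j)\rd x_j=P_j f_v=0$ by the ANOVA property. Hence these terms contribute nothing.

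Therefore
$$\hat f^w(\bsk)=\sum_{v\subseteq u}\hat g_v(\bsk_v)\prod_{j\in u\setminus v}\widehat{w_j}(k_j),\qquad g_v:=w_v\cdot f_v\circ T_v \text{ on } \itr^{|v|}.$$
By Cauchy--Schwarz, at the cost of a factor $2^{|u|}$, and summing over $L_{\bsell}$, which is a product set,
$$\sigma^2_{\bsell}\le 2^{|u|}\sum_{v\subseteq u}\Big(\sum_{\lceil\bsk_v\rceil=\bsell_v}|\hat g_v(\bsk_v)|^2\Big)\prod_{j\in u\setminus v}\sum_{\lceil k\rceil=\ell_j}|\widehat{w_j}(k)|^2.$$

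\textbf{One-dimensional factors.} For $j\in u\setminus v$, apply Lemma~\ref{lem:sigmabound} in dimension one to $w_j$:
$$\sum_{\lceil k\rceil=\ell_j}|\widehat{w_j}(k)|^2\le C_{b,\alpha}\,b^{-\alpha\ell_j}\,\Vert w_j-1\Vert_{L^2}^{2-2\alpha}\,\Vert w_j'\Vert_{L^1}^{2\alpha}.$$
Replacing $w_j$ by $w_j-1$ is allowed because $\ell_j\ge1$, so the constant does not affect these coefficients. Since $w_\theta-1$ is bounded and supported on a set of measure $O(\theta)$, we get $\Vert w_\theta-1\Vert_{L^2}^2=O(\theta)$. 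Also $\Vert w'_\theta\Vert_{L^1}=O(1)$, because $w_\theta$ is monotone on $(0,\theta)$ with bounded range. This gives the factor $\theta_j^{1-\alpha}$.

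\textbf{The $g_v$ factor.} Apply Lemma~\ref{lem:sigmabound} to $g_v$ in dimension $|v|$. For the $L^2$ norm, Lemma~\ref{lem:Sobolevimbedding} combined with Theorem~\ref{thm:Wqcase} bounds $\Vert g_v\Vert_{L^\infty}$ by $\Vert f_v\Vert_{W^{1,q}_{\mix}}\prod_{j\in v}C\theta_j^{-1/q-\varepsilon}$. For $q\le2$ this suffices. The derivative norm $\Vert\partial^{v}g_v\Vert_{L^1}$ is bounded by the same quantity via Theorem~\ref{thm:Wqcase}. Together these give
$$\theta^{-(2-2\alpha)/q-2\alpha/q}=\theta^{-2/q}.$$
That is worse than the target $\nu_{\alpha,q}=\max(2\alpha/q,\,2/q+\alpha-1)$, so a sharper $L^2$ bound is needed.

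\textbf{Main obstacle: a sharp $L^2$ bound.} Write $\Vert g_v\Vert_{L^2}^2=\int |f_v\circ T|^2w^2$. Use Hölder with $|f_v\circ T|^q w$ integrable and a weight factor $w^{2-2/q}\cdot(\text{sup-type control})$. Concretely, for $q\ge2$, Hölder gives
$$\int|f_v\circ T|^2w^2\le\Big(\int|f_v\circ T|^qw\Big)^{2/q}\Big(\int w^{(2-2/q)\frac{q}{q-2}}\Big)^{1-2/q},$$
and $w_\theta\le2$, so $\Vert g_v\Vert_{L^2}\lesssim\Vert f_v\Vert_{L^q(\varphi)}$ with no $\theta$ loss. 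The total exponent is then $-2\alpha/q$.

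For $q\in(1,2]$, interpolate between $L^1$ and $L^\infty$: $\Vert g\Vert_2^2\le\Vert g\Vert_1\Vert g\Vert_\infty$, where $\Vert g_v\Vert_{L^1}\le\Vert f_v\Vert_{L^1(\varphi)}$. This gives $\Vert g_v\Vert_2^{2-2\alpha}\lesssim\theta^{-(1-\alpha)(1/q+\varepsilon)}$. Combined with the derivative factor, the exponent is
$$-(1-\alpha)/q-2\alpha/q\ \ge\ -(2/q+\alpha-1).$$
The last inequality holds because $(1-\alpha)/q+2\alpha/q=(1+\alpha)/q\le 2/q+\alpha-1$ exactly when $(1-\alpha)(1/q-1)\le0$, which is true for $q>1$. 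So this case also fits within the target exponent $\nu_{\alpha,q}$.

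\textbf{Assembly.} Collect the constants into $C^{|u|}_{\varepsilon,p,q,b,\alpha}$ and combine $\prod_j b^{-\alpha\ell_j}=b^{-\alpha\Vert\bsell\Vert_1}$. This yields the stated bound.
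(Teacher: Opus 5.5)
You have a genuine gap in the $L^2$ estimate for $q\in(1,2)$. The rest is sound: the ANOVA reduction, the Cauchy--Schwarz step, the derivative bound from Theorem~\ref{thm:Wqcase}, and your H\"older estimate for $q\ge 2$ agree with the paper. Handling the $w_j$ factors through Lemma~\ref{lem:sigmabound} is a valid alternative to the paper's coefficient bound $|\hat w_j(k)|\le 4\min(\theta_j,b^{-\ell_j+1})$ from \cite{pan:2025}. One small correction there: $w_\theta-1$ is not supported on a set of measure $O(\theta)$, since it equals $\theta/(1-\theta)$ on $[\theta,1-\theta]$. Still, $\Vert w_\theta-1\Vert_{L^2}^2=O(\theta)$ holds, and if you subtract $(1-\theta)^{-1}$ instead of $1$, your support claim becomes literally true.

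For $q\in(1,2)$, interpolating between $L^1$ and $L^\infty$ gives a total exponent $(1+\alpha)/q$, up to $O(\varepsilon)$. But
\[
\frac{1+\alpha}{q}-\Big(\frac{2}{q}+\alpha-1\Big)=(1-\alpha)\Big(1-\frac1q\Big)>0
\]
for every $\alpha\in(0,1)$ and $q>1$. Your final equivalence has the sign reversed: $(1+\alpha)/q\le 2/q+\alpha-1$ holds if and only if $(1-\alpha)(1/q-1)\ge 0$, which is false here. Since $\theta_j<1$, your estimate $\theta_j^{-(1+\alpha)/q}$ is strictly weaker than the claimed $\theta_j^{-\nu_{\alpha,q}}$. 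For instance, $q=3/2$ and $\alpha=1/2$ give $\theta_j^{-1}$ against $\theta_j^{-5/6}$. So the theorem is not established in this range.

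The loss comes from starting the interpolation at $L^1$ when a $\theta$-free bound is already available in $L^q$. Since $w_j\le 2$,
\[
\Vert g_v\Vert_{L^q(\itr^{|v|})}^q=\int|f_v\circ T|^q w_v^{q}\le 2^{(q-1)|v|}\int |f_v\circ T|^q w_v=2^{(q-1)|v|}\Vert f_v\Vert^q_{L^q(\real^s,\varphi)}.
\]
Interpolate instead as $\Vert g_v\Vert_{L^2}\le\Vert g_v\Vert_{L^q}^{q/2}\Vert g_v\Vert_{L^\infty}^{1-q/2}$, and bound $\Vert g_v\Vert_{L^\infty}$ by Lemma~\ref{lem:Sobolevimbedding} and Theorem~\ref{thm:Wqcase}, as the paper does. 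This gives $\Vert g_v\Vert_{L^2}\lesssim\Vert f_v\Vert_{W^{1,q}_{\mix}(\real^s,\varphi)}\prod_{j\in v}\theta_j^{-(1/q-1/2)-O(\varepsilon)}$. The total exponent is then $(2/q-1)(1-\alpha)+2\alpha/q=2/q+\alpha-1=\nu_{\alpha,q}$, exactly as required.
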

\begin{proof}
    From \cite[Lemma 3.1]{pan:2025},
$$\hat{f}^w(\bsk)=\sum_{v\subseteq\supp(\bsk)}\hat{f}^w_v(\bsk_v)\prod_{j\in \supp(\bsk)\setminus v} \hat{w}_j(k_j) \quad \text{for} \quad f^w_v(\bsu_v)=f_v\circ T(\bsu) \prod_{j\in v} w_j(u_j)$$
with $f_v\circ T(\bsu)$ interpreted as a function of $\bsu_v$.
Using Cauchy–Schwarz inequality,
\begin{align*}
 \sigma^2_{\bsell}=\sum_{\bsk\in L_{\bsell}} |\hat{f}^w(\bsk)|^2\leq &\sum_{\bsk\in L_{\bsell}} 2^{|\supp(\bsell)|} \sum_{v\subseteq\supp(\bsk)}|\hat{f}^w_v(\bsk_v)|^2\prod_{j\in \supp(\bsk)\setminus v} |\hat{w}_j(k_j)|^2   \\
 =&  2^{|\supp(\bsell)|} \sum_{v\subseteq\supp(\bsell)} \Big(\sum_{\bsk_v\in L_{\bsell,v}}|\hat{f}^w_v(\bsk_v)|^2\Big)\prod_{j\in \supp(\bsell)\setminus v} \sum_{k_j=b^{\ell_j-1}}^{b^{\ell_j}-1}|\hat{w}_j(k_j)|^2 ,
\end{align*}
where $L_{\bsell,v}=\{\bsk_v\in \natu^{v}\mid \lceil \bsk_v\rceil =\bsell_v\}$.
   By identifying $v$ with $1:|v|$, Lemma~\ref{lem:sigmabound} implies
   \begin{equation}\label{eqn:kvsum}
\sum_{\bsk_v\in L_{\bsell,v}}|\hat{f}^w_v(\bsk_v)|^2\leq C^{|v|}_{b,\alpha} b^{-\alpha\sum_{j\in v}\ell_j}\Vert f^w_v \Vert^{2-2\alpha}_{L^2(\itr^{|v|})}\Vert\partial^{v}f^w_v\Vert^{2\alpha}_{L^1(\itr^{|v|})}.       
   \end{equation} 
Theorem~\ref{thm:Wqcase} shows
$$\Vert\partial^{v}f^w_v\Vert_{L^1(\itr^{|v|})}\leq\Vert f^w_v \Vert_{W^{1,1}_{\mix}(\itr^{|v|})}\leq \Vert f_v \Vert_{W^{1,q}_{\mix}(\real^s,\varphi)}  \prod_{j\in v}C_{\varepsilon,p,q}\theta^{-1/q-\varepsilon}_j.$$
If $q\geq 2$, we use \cite[Lemma 3.1]{pan:2025} to bound
$$ \Vert f^w_v \Vert_{L^2(\itr^{|v|})}\leq \Vert f^w_v \Vert_{L^q(\itr^{|v|})}\leq  2^{\frac{q-1}{q}|v|}\Vert f_v\Vert_{L^q(\real^{s},\varphi)}\leq 2^{\frac{q-1}{q}|v|}\Vert f_v \Vert_{W^{1,q}_{\mix}(\real^s,\varphi)} .$$
If $q\in (1,2)$, we use Lemma~\ref{lem:Sobolevimbedding} and the interpolation inequality \cite[Theorem 2.11]{adams2003sobolev} to bound
\begin{align*}
  \Vert f^w_v \Vert_{L^2(\itr^{|v|})}\leq & \Vert f^w_v \Vert^{q/2}_{L^q(\itr^{|v|})}\Vert f^w_v \Vert^{1-q/2}_{L^\infty(\itr^{|v|})}  \\
  \leq & 2^{\frac{q-1}{2}|v|}\Vert f_v\Vert^{q/2}_{L^q(\real^{s},\varphi)} \Vert f^w_v \Vert^{1-q/2}_{W^{1,1}_{\mix}(\itr^{|v|})}\\
  \leq & 2^{\frac{q-1}{2}|v|}\Vert f_v \Vert^{q/2}_{W^{1,q}_{\mix}(\real^s,\varphi)} \Big(\Vert f_v \Vert_{W^{1,q}_{\mix}(\real^s,\varphi)}  \prod_{j\in v}C_{\varepsilon,p,q}\theta^{-1/q-\varepsilon}_j\Big)^{1-q/2}\\
  =&\Vert f_v \Vert_{W^{1,q}_{\mix}(\real^s,\varphi)} \prod_{j\in v}2^{\frac{q-1}{2}}C^{1-q/2}_{\varepsilon,p,q}\theta^{-1/q+1/2-(1-q/2)\varepsilon}_j.
\end{align*}
Putting the above bounds into equation~\eqref{eqn:kvsum}, we get
$$\sum_{\bsk_v\in L_{\bsell,v}}|\hat{f}^w_v(\bsk_v)|^2\leq C_{3}^{|v|} b^{-\alpha\sum_{j\in v}\ell_j}\Vert f_v \Vert^2_{W^{1,q}_{\mix}(\real^s,\varphi)}\prod_{j\in v}\theta^{-\nu_{\alpha,q}-\varepsilon}_j,$$
where $C_3$ depends on $\varepsilon,p,q,b$ and $\alpha$. Finally, \cite[Lemma 3.2]{pan:2025} implies $|\hat{w}_j(k_j)|\leq 4\min(\theta_j,b^{-\ell_j+1})$ and
$$\sum_{k_j=b^{\ell_j-1}}^{b^{\ell_j}-1}|\hat{w}_j(k_j)|^2\leq 16(b-1)\min(\theta^2_jb^{\ell_j-1},b^{-\ell_j+1})\leq 16(b-1) \theta^{1-\alpha}_j b^{-\alpha(\ell_j-1)}.$$
We therefore conclude that
\begin{align*}
   \sigma^2_{\bsell}\leq &2^{|\supp(\bsell)|}b^{-\alpha \Vert\bsell\Vert_1} \sum_{v\subseteq\supp(\bsell)} \Big( C_{3}^{|v|} \Vert f_v \Vert^2_{W^{1,q}_{\mix}(\real^s,\varphi)}\prod_{j\in v}\theta^{-\nu_{\alpha,q}-\varepsilon}_j\Big)\prod_{j\in \supp(\bsell)\setminus v}16(b-1) \theta^{1-\alpha}_jb^\alpha \\
 \leq &  C_{\varepsilon,p,q,b,\alpha}^{|\supp(\bsell)|}b^{-\alpha \Vert\bsell\Vert_1} \sum_{v\subseteq\supp(\bsell)} \Vert f_v \Vert^2_{W^{1,q}_{\mix}(\real^s,\varphi)}\prod_{j\in v}\theta^{-\nu_{\alpha,q}-\varepsilon}_j\prod_{j\in \supp(\bsell)\setminus v}\theta^{1-\alpha}_j 
    \end{align*}
    for $C_{\varepsilon,p,q,b,\alpha}=2\max(C_3,16b^\alpha(b-1))$.
\end{proof}

\subsection{Step III: Bound on scrambled nets variance}
    \begin{proof}[Proof of Theorem~\ref{thm:qmcvar}]
        By Lemma~\ref{lem:qmcvar} and Theorem~\ref{thm:sigmabound},
        \begin{align*}
            &\e \Big|\frac{1}{n}\sum_{i=0}^{n-1}f^w(\bsu_i)-\int_{\itr^s} f^w(\bsu)\rd \bsu\Big|^2 =\frac{1}{n}\sum_{\emptyset\neq \omega \subseteq 1{:}s}\sum_{\bsell\in \natu^\omega} \Gamma_{\omega,\bsell} \sigma^2_{\bsell} \\
            \leq & \frac{1}{b^m} \sum_{\emptyset\neq \omega \subseteq 1{:}s}\sum_{\bsell\in \natu^\omega}\Gamma_{\omega,\bsell} C_{\varepsilon,p,q,b}^{|\omega|}b^{-\alpha \Vert\bsell\Vert_1} \sum_{v\subseteq\omega} \Vert f_v \Vert^2_{W^{1,q}_{\mix}(\real^s,\varphi)}\prod_{j\in v}\theta^{-\nu_{\alpha,q}-\varepsilon}_j\prod_{j\in \omega\setminus v}\theta^{1-\alpha}_j \\
            =&\frac{\Vert f\Vert^2_q}{b^m} \sum_{\emptyset\neq \omega \subseteq 1{:}s}C_{\varepsilon,p,q,b}^{|\omega|}\Big(\sum_{\bsell\in \natu^\omega} \Gamma_{\omega,\bsell}b^{-\alpha\Vert \bsell\Vert_1}\Big)\sum_{v\subseteq \omega}\gamma^2_v\prod_{j\in v} \theta^{-\nu_{\alpha,q}-\varepsilon}_j\prod_{j\in \omega\setminus v} \theta^{1-\alpha}_j.
        \end{align*}
        It is shown in the proof of \cite[Theorem 5.3]{pan:2025} that
        \begin{align*}
          \sum_{\bsell\in \natu^\omega} \Gamma_{\omega,\bsell}b^{-\alpha\Vert \bsell\Vert_1}\leq \frac{C^{|\omega|}_{4}m^{|\omega|-1}b^{(1+\alpha)t_\omega}}{b^{\alpha m}}
        \end{align*}
        for $C_{4}$ depending on $b$ and $\alpha$. Therefore,
        \begin{align*}
            &\e \Big|\frac{1}{n}\sum_{i=0}^{n-1}f^w(\bsu_i)-\int_{\itr^s} f^w(\bsu)\rd \bsu\Big|^2  \\
            \leq 
            &\frac{\Vert f\Vert^2_q}{b^{(1+\alpha)m}}\sum_{\emptyset\neq \omega \subseteq 1{:}s}C_{*}^{|\omega|}m^{|\omega|-1}b^{(1+\alpha)t_\omega}\sum_{v\subseteq \omega}\gamma^2_v\prod_{j\in v} \theta^{-\nu_{\alpha,q}-\varepsilon}_j\prod_{j\in \omega\setminus v} \theta^{1-\alpha}_j.
        \end{align*}
        for $C_{*}=C_{\varepsilon,p,q,b}C_{4}$. The conclusion follows after noticing $\int_{\itr^s} f^w(\bsu)\rd \bsu=\mu$ and $n=b^m$.
    \end{proof}

\section{Application to parameterized PDEs}\label{sec:pdes}
%Consider the model inverse problem of determining the distribution of the random diffusion coefficient of a divergence form elliptic PDEs from observations of a finite set of noisy continuous functionals of the solution. 

\subsection{Model setting}
Consider an elliptic PDEs with a random diffusion coefficient given by
\begin{equation}\label{eq:EPDE}
    \begin{aligned}
    -\nabla_{\bsy}\cdot (a(\bsy,\omega)\nabla_{\bsy} p(\bsy,\omega)) &= g(\bsy)\ \text{in }D,\\
    p(\bsy,\omega) &= 0 \text{ on }\partial D,
\end{aligned}
\end{equation}
with almost every $\omega\in\Omega$ in a complete probability space $(\Omega,\mathcal{F},\mathbb{P})$,
where $D\subseteq \mathbb{R}^d$ $(d=1,2,3)$ is a bounded Lipschitz domain, $\partial D$ is its boundary, and $a(\bsy,\omega)$ is the diffusion coefficient. Denote $H_0^1(D)$ as the subspace of $H^1(D)$ with zero trace on $\partial D$ and the norm $\newnorm{v}_{H_0^1(D)}=\newnorm{\nabla v}_{L^2(D)}$. Let $H^{-1}(D)$ be the dual space of $H_0^1(D)$. The  duality pairing between $H_0^1(D)$ and $H^{-1}(D)$ is denoted by $\inner{\cdot}{\cdot}$. For the numerical approximation of the  problem \eqref{eq:EPDE}, it  is common to represent the input random field $a(\bsy,\omega)$ as a finite sum parameterized by $\bsx\in \real^s$:
\begin{equation}\label{eqn:asyx}
   a(\bsy,\omega)=a_s(\bsy,\bsx) = a_*(\bsy) + a_0(\bsy)\exp\left\{\sum_{j=1}^s x_j \xi_j(\bsy)\right\},
\end{equation}
where $a_*(\bsy)\ge 0$, $a_0(\bsy)>0$ for all $\bsy\in D$ and $\{\xi_j(\bsy)\}_{j=1}^s$ are basis functions.

Following \cite{guth:2025}, we model the parametric inputs $x_j$ as generalized $\beta$-Gaussian random variables with density
\begin{equation}\label{eq:betadensity}
    \varphi(x) = \frac{1}{2\beta^{1/\beta}\Gamma(1+1/\beta)}e^{-\frac{|x|^\beta}{\beta}}\text{ with }\beta>0.
\end{equation}
For $g\in H^{-1}(D)$, we study the problem \eqref{eq:EPDE} with $a(\bsy,\omega)=a_s(\bsy,\bsx)$ in its weak form: seeking a solution $p(\cdot,\bsx)\in H_0^1(D)$ such that 
$$\int_D a_s(\bsy,\bsx)\nabla_x p(\bsy,\bsx)\cdot \nabla_{\bsy} v(\bsy)\mrd \bsy=\inner{g}{v}$$
for all $v\in H_0^1(D)$ and almost $\bsx\in \real^s$.

Given $G\in H^{-1}(D)$ independent of $\bsx$, our goal is to estimate the expectation of $f(\bsx) = \inner{G}{p(\cdot,\bsx)}$ and investigate the convergence rate of our proposed method. 
More generally, we assume that for any $v\subseteq 1{:}s$, 
\begin{equation}\label{eq:normpartialp}
  \newnorm{\partial^v p(\cdot,\bsx)}_{H_0^1(D)}\lesssim (|v|!)^\sigma\left(\prod_{j\in v} b_j\right)\exp\left\lbrace \sum_{j=1}^s \alpha_j|x_j|^\tau\right\rbrace,  
\end{equation}
with $\sigma\ge 1$, $0<\tau\le \beta$, $\bsb =(b_j)_{j\geq 1}\subseteq \natu_0$, and $\bsalpha = (\alpha_j)_{j\geq 1}\subseteq \natu$. The notation $\lesssim$ from now on is used for hiding constant independent of $s$ and $\bsx$. We assume that  $\newnorm{\bsalpha}_1=\sum_{j=1}^\infty\alpha_j<\infty$ so that $\sum_{j=1}^\infty \alpha_j|x_j|^\tau<\infty$ almost surely, which holds for Gevrey regular input random fields \cite{guth:2025}. When $\beta=\tau$, we additionally assume $\beta<1/\newnorm{\bsalpha}_{\infty}$ so that $\exp(\newnorm{\bsalpha}_{\infty} |x|^\tau)\in L^1(\real,\varphi)$ (see \eqref{eqn:Ialpha}).

As a special case, for lognormal random fields given by \eqref{eqn:asyx} with all $x_j$  independently following a standard Gaussian distribution, we have  $\beta=2$, $\sigma=\tau=1$, $\alpha_j=\newnorm{\xi_j(\bsy)}_{L^\infty(\Bar{D})}$ and $b_j=\alpha_j/\ln 2$ (see \cite{graham:2015} for details). 

\subsection{Bounds on $\gamma_v$}
To bound $\gamma_v$, we assume the probability density $\varphi(x)$ satisfies the generalized Poincaré inequality as follows:

\begin{assumption}\label{assump:Poincare}
Given $q>1$, assume that there exists a constant $PC_q>0$ such that for every $f\in W^{1,q}(\real,\varphi)$ and $\mu (f)=\int_\real f(x)\varphi(x)\rd x$,
\begin{equation}\label{eqn:Poincare}
 \int_\real |f(x)-\mu (f)|^q \varphi(x)\rd x\leq PC_q \int_\real |f'(x)|^q \varphi(x)\rd x.   
\end{equation}
\end{assumption}

For $q=2$, \cite{bobkov1999isoperimetric} establishes that Assumption~\ref{assump:Poincare} holds when $\varphi(x)=\exp(-V(x))$ for a convex function $V:\real\to (-\infty,\infty]$. For general $q>1$, the following lemma shows that Assumption~\ref{assump:Poincare} remains valid for several common density functions, including generalized $\beta$-Gaussian with $\beta\ge 1$. 

\begin{lemma}\label{lem:Poincareinequality}
Let $m(x)=(1-\Phi(x))/\varphi(x)$ be the Mills ratio. If $\varphi$ satisfies Assumption~\ref{assump:rho} and $\sup_{x\in [0,\infty)}m(x)<\infty$, then Assumption~\ref{assump:Poincare} holds for every $q>1$.
\end{lemma}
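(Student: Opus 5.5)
We prove a one-dimensional $L^q$ Poincaré inequality from a Hardy-type (Muckenhoupt) argument on each half-line, using the symmetry of $\varphi$ and the bounded Mills ratio. First we reduce to a median-centered estimate. Since $\varphi$ is symmetric, the median is $0$. For any constant $c$, convexity and Jensen give
\[
\int_\real|f-\mu(f)|^q\varphi\,\rd x\le 2^{q-1}\Big(\int_\real|f-c|^q\varphi\,\rd x+|c-\mu(f)|^q\Big)\le 2^{q}\int_\real|f-c|^q\varphi\,\rd x .
\]
So it suffices to bound $\int_\real|f-f(0)|^q\varphi\,\rd x$ by $C\int_\real|f'|^q\varphi\,\rd x$. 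Here $f\in W^{1,q}(\real,\varphi)$ is locally absolutely continuous because $\varphi>0$ is locally bounded below, so $f(0)$ makes sense. By symmetry it is enough to treat $[0,\infty)$ with $g(x)=f(x)-f(0)=\int_0^x f'(t)\rd t$.

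\textbf{Step 1: a weighted Hardy inequality on $[0,\infty)$.} We need
\[
\int_0^\infty\Big|\int_0^x h(t)\rd t\Big|^q\varphi(x)\rd x\le C\int_0^\infty |h|^q\varphi\,\rd x .
\]
By the Muckenhoupt criterion this holds iff
\[
B:=\sup_{r>0}\Big(\int_r^\infty\varphi\Big)^{1/q}\Big(\int_0^r\varphi^{1-q^*}\Big)^{1/q^*}<\infty,\qquad q^*=q/(q-1).
\]
Rather than cite the criterion, we would prove the needed direction directly. Write $h=h\varphi^{1/q}\cdot\varphi^{-1/q}$, apply Hölder with a weight $\psi(t)=(1-\Phi(t))^{-\delta}$ inserted, and swap the order of integration by Fubini. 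This is the standard proof of Hardy's inequality, and it needs only estimates of the form below.

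\textbf{Step 2: verifying the Muckenhoupt quantity via the Mills ratio.} Let $M=\sup_{x\ge0}m(x)$, so $\varphi(x)\ge(1-\Phi(x))/M$ on $[0,\infty)$. Then
\[
\int_0^r\varphi^{1-q^*}\rd t\le M^{q^*-1}\int_0^r(1-\Phi(t))^{1-q^*}\rd t .
\]
This integral is controlled by the relation $\frac{\rd}{\rd t}(1-\Phi)^{1-q^*}=(q^*-1)(1-\Phi)^{-q^*}\varphi\ge (q^*-1)(1-\Phi)^{1-q^*}/M$, which is again the Mills ratio bound. Integrating yields
\[
\int_0^r(1-\Phi)^{1-q^*}\rd t\le \frac{M}{q^*-1}(1-\Phi(r))^{1-q^*}.
\]
Hence
\[
B^{q^*}\le \frac{M^{q^*}}{q^*-1}(1-\Phi(r))^{q^*/q}(1-\Phi(r))^{1-q^*}=\frac{M^{q^*}}{q^*-1},
\]
because $q^*/q+1-q^*=0$. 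Thus $B<\infty$, and the constant depends only on $M$ and $q$.

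\textbf{Step 3: assembly.} We combine the half-line bounds (the negative half follows by symmetry) with the median reduction, giving $PC_q\le 2^{q}\cdot 2\,C_{\mathrm{Hardy}}(q,M)$, where the Hardy constant satisfies $C\le q(q^*)^{q-1}B^q$. Assumption~\ref{assump:rho} is used only for positivity and symmetry of $\varphi$, which guarantees the median is $0$ and that $f$ is absolutely continuous.

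The main obstacle is Step 1 if we prove Hardy's inequality from scratch rather than citing Muckenhoupt. The delicate point is the choice of the auxiliary weight in Hölder's inequality. We would try $\psi=(1-\Phi)^{-1/(qq^*)}$ first, with the derivative identity from Step 2 used to integrate the resulting tail terms. A secondary technical point is justifying Fubini and the identity $g(x)=\int_0^x f'$ for $f$ only in the weighted Sobolev space; we would handle it by local absolute continuity plus monotone convergence.
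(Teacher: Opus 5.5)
Your proposal is correct and follows essentially the paper's argument. You reduce to $f-f(0)$ via the triangle inequality/Jensen, pass to a half-line by symmetry, invoke Muckenhoupt's criterion for the weighted Hardy inequality, and bound the Muckenhoupt constant through the bounded Mills ratio, reaching the same bound $B\le M(q-1)^{(q-1)/q}$. The differences are cosmetic: you work on $[0,\infty)$ with $1-\Phi$ and apply the Mills-ratio bound twice via a derivative inequality, whereas the paper writes $\varphi^{-1/(q-1)}=\varphi^{-q/(q-1)}\varphi$ and integrates against $\rd\Phi$ after a single use. Citing Muckenhoupt, as the paper does, also makes your from-scratch Hardy argument unnecessary.
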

\begin{proof}
    See Appendix~\ref{appB}.
\end{proof}

\begin{remark}
    For the standard Gaussian distribution, it is known that its Mills ratio satisfies $m(x)< x^{-1}$ for $x>0$. An analogous bound, $m(x)\le x^{1-\beta}$, holds for the generalized $\beta$-Gaussian distribution with $\beta\ge 1$.  This is because for any $\beta\ge 1$ and $x>0$
    \begin{align*}
        1-\Phi(x) &=\frac{1}{2\beta^{1/\beta}\Gamma(1+1/\beta)} \int_x^\infty e^{-\frac{t^\beta}{\beta}}\rd t\\
        &\le  \frac{1}{2\beta^{1/\beta}\Gamma(1+1/\beta)} \int_x^\infty \left(\frac{t}{x}\right)^{\beta-1}e^{-\frac{t^\beta}{\beta}}\rd t\\
        &=\frac{x^{1-\beta}}{2\beta^{1/\beta}\Gamma(1+1/\beta)} e^{-\frac{x^\beta}{\beta}}\\
        &=x^{1-\beta}\varphi(x).
    \end{align*}
Lemma~\ref{lem:Poincareinequality} is therefore applicable whenever $\beta\geq 1$.
\end{remark}

To ensure the interchange of integration and differentiability, we make the following assumption.
\begin{assumption}\label{assump:interchange}
Assume that for $f\in W^{1,q}_{\mix}(\real^s,\varphi)$ and $v,w\subseteq 1{:}s$ with $v\cap w=\emptyset$,
$$
\int_{\real^{|w|}}\partial^v f(\bsx) \prod_{j\in w}\varphi(x_j)\rd \bsx_{w}=\partial^v\int_{\real^{|w|}} f(\bsx) \prod_{j\in w}\varphi(x_j)\rd \bsx_{w}
$$  
for almost every $\bsx_{w^c}\in \real^{s-|w|}$ (with respect to the Lebesgue measure).
\end{assumption}

\begin{lemma}\label{lem:interchange}
Assumption~\ref{assump:interchange} holds if $\varphi$  is continuous  and strictly positive over $\real$.
\end{lemma}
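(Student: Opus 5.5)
To prove Lemma~\ref{lem:interchange}, we reduce to a single variable, apply Fubini on difference quotients, and use the fundamental theorem of calculus for weak derivatives. First, by Fubini's theorem and induction on $|w|$, it suffices to integrate out one variable $x_k$ with $k\notin v$. Similarly, by induction on $|v|$ it suffices to take a single derivative $\partial_i$ with $i\in v$, applied to a function that is already known to lie in the appropriate weighted mixed Sobolev space. The reduced claim is this: for $g$ with $g,\partial_i g\in L^1_{\mathrm{loc}}$ (weighted by $\varphi$ in $x_k$), we have
\[
\partial_i\int_\real g(\bsx)\varphi(x_k)\rd x_k=\int_\real \partial_i g(\bsx)\varphi(x_k)\rd x_k
\]
in the weak sense, for a.e.\ remaining coordinates. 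Iterating this over $j\in w$ and the derivatives in $v$ gives the full statement. This relies on the fact that mixed weak derivatives $\partial^{v'}f$, $v'\subseteq v$, exist in $L^q(\real^s,\varphi)$.

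The main step is to verify the weak-derivative identity by testing against $\psi\in C_c^\infty(\real^{s-1})$ in the variables other than $x_k$. We compute
\[
\int \Big(\int g\,\varphi(x_k)\rd x_k\Big)\partial_i\psi \rd\bsx_{-k}=\int\!\!\int g(\bsx)\,\partial_i\big[\psi(\bsx_{-k})\varphi(x_k)\big]\rd\bsx ,
\]
by Fubini, since $\varphi$ does not depend on $x_i$. We would like to invoke the definition of the weak derivative $\partial_i g$, but $\psi(\bsx_{-k})\varphi(x_k)$ is neither smooth nor compactly supported in $x_k$. Here continuity and positivity of $\varphi$ are essential. Because $\varphi$ is continuous and strictly positive, on every compact set $K$ we have $\inf_K\varphi>0$, so $f\in W^{1,q}_{\mix}(\real^s,\varphi)$ implies $\partial^{v'}f\in L^q_{\mathrm{loc}}(\real^s)\subseteq L^1_{\mathrm{loc}}(\real^s)$. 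Thus the weak derivatives are genuine local weak derivatives. Next we approximate $\varphi(x_k)$ by $\chi_R(x_k)(\varphi*\rho_\delta)(x_k)$, where $\chi_R$ is a smooth cutoff and $\rho_\delta$ a mollifier. For this smooth compactly supported test function, the definition of $\partial_i g$ gives
\[
\int g\,\partial_i\psi\,\chi_R(\varphi*\rho_\delta)\rd\bsx=-\int \partial_i g\,\psi\,\chi_R(\varphi*\rho_\delta)\rd\bsx .
\]

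We then pass to the limits $\delta\to0$ and $R\to\infty$. Continuity of $\varphi$ gives uniform convergence $\varphi*\rho_\delta\to\varphi$ on compact sets. Because $g$ and $\partial_i g$ are in $L^1(\real^s,\varphi)$ (since $L^q(\varphi)\subseteq L^1(\varphi)$) and $\psi$ is bounded with compact support, dominated convergence then lets $R\to\infty$. This yields $\int (P_kg)\,\partial_i\psi=-\int (P_k\partial_i g)\,\psi$, which is the claimed identity for a.e.\ remaining coordinates.

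The obstacle I expect is the $R\to\infty$ limit. It requires a dominating function of the form $|g|\varphi$ on the unbounded $x_k$-direction, uniformly for $\bsx_{-k}$ in the support of $\psi$. This comes from $g\in L^1(\real^s,\varphi)$ together with $\prod_{j\ne k}\varphi(x_j)$ being bounded below on $\mathrm{supp}\,\psi$, which again uses strict positivity and continuity of $\varphi$.
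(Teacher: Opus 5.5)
Your proof is correct, but it takes a different route from the paper's.

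\textbf{The paper's route.} The paper argues pointwise. For a singleton $v$, it writes the difference quotient of $x_v\mapsto\int_{\real^{|w|}} f\prod_{j\in w}\varphi(x_j)\rd\bsx_w$ as $h^{-1}\int_{x_v}^{x_v+h}g(t,\cdot)\rd t$, where $g=\int_{\real^{|w|}}\partial_v f\prod_{j\in w}\varphi(x_j)\rd\bsx_w$. Fubini is justified by bounding $\varphi$ below on $[x_v,x_v+h]$. It then applies the Lebesgue differentiation theorem, citing Nikol'skii for the measurability of the union of the exceptional null sets over the remaining coordinates.

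\textbf{Your route.} You argue by duality. You test against $\psi\in C_c^\infty$ in the non-integrated variables and replace the non-smooth, non-compactly supported factor $\varphi(x_k)$ by $\chi_R(\varphi*\rho_\delta)$. You then use the definition of the weak derivative and remove the regularization by uniform convergence on compacts and dominated convergence. Continuity and strict positivity of $\varphi$ play the same role in both proofs: a positive lower bound on compacts turns weighted integrability into local Lebesgue integrability.

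\textbf{What each buys.} Your argument delivers exactly the weak (Sobolev) identity, which is the sense in which $\partial^v$ is used in $W^{1,q}_{\mix}(\real^s,\varphi)$. It avoids the exceptional-set bookkeeping. It also avoids the step $f(x_v+h,\cdot)-f(x_v,\cdot)=\int_{x_v}^{x_v+h}\partial_v f(t,\cdot)\rd t$, which the paper uses without comment and which requires a representative of $f$ that is absolutely continuous on almost every line. The paper's argument gives a pointwise conclusion in exchange. The difference quotients of the integrated function converge almost everywhere, so its classical partial derivative exists a.e.\ and coincides with the integrated weak derivative.

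Your preliminary reduction to $|w|=1$ is harmless but unnecessary, since the same test-function argument works directly with the full product $\prod_{j\in w}\varphi(x_j)$.
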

\begin{proof}
    See Appendix~\ref{appC}.
\end{proof}

\begin{lemma}\label{lem:Poincare}
    Suppose that Assumptions~\ref{assump:Poincare} and \ref{assump:interchange} hold with $q>1$. Then for $f\in W^{1,q}_{\mix}(\real^s,\varphi)$ with ANOVA terms $\{f_v\mid v\subseteq 1{:}s\}$,
    $$\Vert f_v \Vert_{W^{1,q}_{\mix}(\real^s,\varphi)}\leq (PC_q+1)^{|v|/q} \Vert \partial^v f_v \Vert_{L^q(\real^s,\varphi)} \ \forall v\subseteq 1{:}s. $$
\end{lemma}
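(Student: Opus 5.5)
The plan is to bound $\Vert \partial^{u} f_v\Vert_{L^q(\real^s,\varphi)}$ separately for each $u\subseteq 1{:}s$, and then sum the $q$-th powers. Two cases are trivial. First, $f_v$ depends only on $\bsx_v$, so $\partial^u f_v=0$ whenever $u\not\subseteq v$. Second, the case $u=v$ is exactly the right-hand side. So it remains to treat $u\subsetneq v$. There I aim to show
$$\Vert \partial^u f_v\Vert^q_{L^q(\real^s,\varphi)}\le PC_q^{|v\setminus u|}\Vert \partial^v f_v\Vert^q_{L^q(\real^s,\varphi)}.$$
Summing over $u\subseteq v$ then gives $\sum_{u\subseteq v}PC_q^{|v\setminus u|}=(PC_q+1)^{|v|}$. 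Taking the $1/q$ power yields the claim.

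The key structural fact is that $P_j f_v=0$ for every $j\in v$. This holds because $f_v=\prod_{j'\in v}(I-P_{j'})P_{1{:}s\setminus v}f$, the operators commute, and $P_j(I-P_j)=P_j-P_j^2=0$. Next, fix $u\subseteq v$ and $j\in v\setminus u$, and set $g=\partial^u f_v$. By Assumption~\ref{assump:interchange} with $w=\{j\}$, and since $j\notin u$, we get $P_j g=\partial^u P_j f_v=0$ for almost every $\bsx_{-j}$. Hence, for a.e. fixed $\bsx_{-j}$, the one-dimensional function $x_j\mapsto g(\bsx)$ has mean zero under $\varphi$. Assumption~\ref{assump:Poincare} then gives
$$\int_\real |g(\bsx)|^q\varphi(x_j)\rd x_j\le PC_q\int_\real|\partial_j g(\bsx)|^q\varphi(x_j)\rd x_j .$$
I integrate this over the remaining coordinates against $\prod_{i\neq j}\varphi(x_i)$ and use Fubini. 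This yields $\Vert\partial^u f_v\Vert^q_{L^q}\le PC_q\Vert\partial^{u\cup\{j\}}f_v\Vert^q_{L^q}$. Iterating this one coordinate at a time through $v\setminus u$ gives the displayed bound. At each stage the mean-zero property is re-derived from $P_jf_v=0$ and the interchange assumption.

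The main obstacle is justifying that the slice $x_j\mapsto \partial^u f_v(\bsx)$ lies in $W^{1,q}(\real,\varphi)$ for almost every $\bsx_{-j}$, with weak derivative equal to the slice of $\partial^{u\cup\{j\}}f_v$. Only then can Assumption~\ref{assump:Poincare} be applied pointwise. I would argue this from $f_v\in W^{1,q}_{\mix}(\real^s,\varphi)$, which is the preceding lemma from \cite{pan:2025}, together with Fubini. Since $\varphi>0$ is bounded, the weighted spaces coincide locally with unweighted ones. The standard characterization of Sobolev functions as absolutely continuous on almost every line then gives the slice property.
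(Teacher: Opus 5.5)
Your proposal is correct and follows essentially the same route as the paper. You use $P_jf_v=0$ together with Assumption~\ref{assump:interchange} to get the mean-zero property of $\partial^u f_v$ in each $x_j$, $j\in v\setminus u$. You then apply the one-dimensional Poincaré inequality coordinate by coordinate to reach $\Vert\partial^u f_v\Vert^q_{L^q(\real^s,\varphi)}\le PC_q^{|v\setminus u|}\Vert\partial^v f_v\Vert^q_{L^q(\real^s,\varphi)}$, and finally sum over $u\subseteq v$ to obtain $(PC_q+1)^{|v|}$. You also explicitly address the slice regularity needed to invoke Assumption~\ref{assump:Poincare}, which the paper leaves implicit.
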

\begin{proof}
The proof is inspired by \cite{effdimsobononper}.   For $\omega\subsetneq v$ and $j\in v\setminus \omega$, the definition of ANOVA decomposition and Assumption~\ref{assump:interchange} imply
    $$\int_{\real} \partial^w f_v(\bsx)\varphi(x_j)\rd x_j= \partial^w\Big(\int_{\real} f_v(\bsx)\varphi(x_j)\rd x_j\Big)=0.$$
    Denoting $\omega'=\omega\cup \{j\}$, we can apply Assumption~\ref{assump:Poincare} with respect to $x_j$ and bound
    $$\int_\real |\partial^\omega f_v(\bsx)|^q \varphi(x_j)\rd x_j\leq PC_q \int_\real |\partial^{\omega'}f(\bsx)|^q \varphi(x_j)\rd x_j.$$
    An induction shows that
    $$\int_{\real^{|v\setminus\omega|}} |\partial^\omega f_v(\bsx)|^q \prod_{j\in v\setminus \omega} \varphi(x_j)\rd \bsx_{v\setminus \omega}\leq PC^{|v|-|\omega|}_q \int_{\real^{|v\setminus\omega|}} |\partial^{v}f_v(\bsx)|^q \prod_{j\in v\setminus \omega} \varphi(x_j)\rd \bsx_{v\setminus \omega}.$$
   Further integrating both sides with respect to $\prod_{j\in (v\setminus\omega)^c} \varphi(x_j)\rd \bsx_{(v\setminus\omega)^c}$, we obtain
    $$\Vert \partial^\omega  f_v \Vert^q_{L^q(\real^s,\varphi)}\leq PC^{|v|-|\omega|}_q \Vert \partial^v f_v \Vert^q_{L^q(\real^s,\varphi)} . $$
    Finally, because $f_v$ does not depend on $\bsx_{v^c}$,
    \begin{align*}
     \Vert f_v \Vert^q_{W^{1,q}_{\mix}(\real^s,\varphi)}= \sum_{\omega\subseteq v} \Vert \partial^\omega  f_v \Vert^q_{L^q(\real^s,\varphi)}\leq &\sum_{\omega\subseteq v} PC^{|v|-|\omega|}_q\Vert \partial^v f_v \Vert^q_{L^q(\real^s,\varphi)}\\
     =&(PC_q+1)^{|v|}\Vert \partial^v f_v \Vert^q_{L^q(\real^s,\varphi)}.   
    \end{align*}
\end{proof}

\begin{theorem}\label{thm:PODweight}
  Let  $f(\bsx) = \inner{G}{p(\cdot,\bsx)}$ for $G\in H^{-1}(D)$. Assume that $\varphi(x)$ is the density of generalized $\beta$-Gaussian given by \eqref{eq:betadensity}, and \eqref{eq:normpartialp} holds with $\sigma\ge 1$, $0<\tau\le \beta$, and $\newnorm{\bsalpha}_1<\infty$. Then $f\in W^{1,q}_{\mix}(\real^s,\varphi)$ for all $q>1$ when $\beta>\tau$, or for $q\in (1,1/(\beta\newnorm{\bsalpha}_{\infty}))$ when $\beta=\tau<1/\newnorm{\bsalpha}_{\infty}$. Furthermore, for nonzero $f$ and all $v\subseteq 1{:}s$,
  \begin{equation}\label{eq:boundgammav}
      \gamma_v=\Vert f\Vert^{-1}_{q} \Vert f_v \Vert_{W^{1,q}_{\mix}(\real^s,\varphi)}\lesssim (|v|!)^{\sigma}\prod_{j\in v} \Gamma_j,
  \end{equation}
  where $\Gamma_j=(PC_q+1)^{1/q}b_j>0$.
\end{theorem}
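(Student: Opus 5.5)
The proof has two parts: first show $f\in W^{1,q}_{\mix}(\real^s,\varphi)$ by bounding every mixed derivative via \eqref{eq:normpartialp}, then bound $\gamma_v$ by combining Lemma~\ref{lem:Poincare} with a bound on $\partial^v f_v$ in terms of $\partial^v f$.

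\textbf{Step 1: $f\in W^{1,q}_{\mix}(\real^s,\varphi)$.} Since $G$ does not depend on $\bsx$, differentiation commutes with the duality pairing, so $\partial^v f(\bsx)=\inner{G}{\partial^v p(\cdot,\bsx)}$. With \eqref{eq:normpartialp} this gives
$$|\partial^v f(\bsx)|\lesssim \Vert G\Vert_{H^{-1}(D)}(|v|!)^\sigma\prod_{j\in v}b_j\,\exp\Big\{\sum_{j=1}^s\alpha_j|x_j|^\tau\Big\}.$$
The $q$-th power of the exponential factor factorizes over coordinates. Hence
$$\Vert \partial^v f\Vert_{L^q(\real^s,\varphi)}^q\lesssim (|v|!)^{\sigma q}\prod_{j\in v}b_j^q\prod_{j=1}^s I(q\alpha_j),\qquad I(a)=\int_\real e^{a|x|^\tau}\varphi(x)\rd x .$$

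\textbf{Step 2: the integrals $I(q\alpha_j)$ are finite with a product bounded independently of $s$.} If $\tau<\beta$, then $I(a)<\infty$ for every $a$. Since $I$ is smooth with $I(0)=1$ and $\alpha_j\le\Vert\bsalpha\Vert_\infty$, we get $I(a)\le 1+Ca$ for $a\le q\Vert\bsalpha\Vert_\infty$. Therefore
$$\prod_{j} I(q\alpha_j)\le \exp\big(Cq\Vert\bsalpha\Vert_1\big),$$
which is bounded independently of $s$. If $\tau=\beta$, then $I(a)<\infty$ exactly when $a<1/\beta$. This requires $q\Vert\bsalpha\Vert_\infty<1/\beta$, i.e.\ $q<1/(\beta\Vert\bsalpha\Vert_\infty)$, which is the stated range. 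The same convexity/Lipschitz bound on the compact interval $[0,q\Vert\bsalpha\Vert_\infty]$ then gives the uniform product bound. This step is routine but is where the admissible range of $q$ comes from.

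\textbf{Step 3: bound on $\gamma_v$.} The density \eqref{eq:betadensity} is continuous and positive, so Lemma~\ref{lem:interchange} gives Assumption~\ref{assump:interchange}. Assumption~\ref{assump:Poincare} is taken as given; it holds via Lemma~\ref{lem:Poincareinequality} when $\beta\ge1$. Lemma~\ref{lem:Poincare} then yields
$$\Vert f_v\Vert_{W^{1,q}_{\mix}(\real^s,\varphi)}\le (PC_q+1)^{|v|/q}\Vert\partial^v f_v\Vert_{L^q(\real^s,\varphi)}.$$
Next I would show $\Vert\partial^v f_v\Vert_{L^q}\le 2^{|v|}\Vert\partial^v f\Vert_{L^q}$. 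Write $f_v=\prod_{j\in v}(I-P_j)P_{v^c}f$. Because $P_j$ removes the dependence on $x_j$, we have $\partial_j P_j=0$ for $j\in v$. By Assumption~\ref{assump:interchange}, $\partial^v$ commutes with $P_{v^c}$, so $\partial^v f_v=P_{v^c}\partial^v f$. Jensen's inequality then gives $\Vert P_{v^c}\partial^v f\Vert_{L^q}\le\Vert\partial^v f\Vert_{L^q}$, which is even better than the factor $2^{|v|}$.

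Combining with Steps 1--2,
$$\Vert f_v\Vert_{W^{1,q}_{\mix}}\lesssim (|v|!)^\sigma\prod_{j\in v}(PC_q+1)^{1/q}b_j .$$
It remains to divide by $\Vert f\Vert_q$. Since $f$ is nonzero, $\Vert f\Vert_q\ge \Vert f_\emptyset\Vert_{W^{1,q}_{\mix}}$ or $\Vert f\Vert_q\ge\Vert f_{v_0}\Vert_{W^{1,q}_{\mix}}$ for some $v_0$. This quantity is a positive constant of the problem, and I would absorb it into $\lesssim$.

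\textbf{Main obstacle.} The delicate part is justifying the commutation steps for $\partial^v f_v$ and the uniform-in-$s$ control of $\prod_j I(q\alpha_j)$ in the borderline case $\tau=\beta$.
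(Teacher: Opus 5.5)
Your proposal is correct and follows essentially the paper's route: the pointwise bound from \eqref{eq:normpartialp}, the identity $\partial^v f_v=P_{v^c}\partial^v f$ via Assumption~\ref{assump:interchange}, Lemma~\ref{lem:Poincare}, $s$-uniform control of the exponential moments $I(\cdot)$, and absorbing $\Vert f\Vert_q^{-1}$ into $\lesssim$, which the paper also does. The differences are minor. The paper pulls the pointwise bound inside $P_{v^c}$ and gets the slightly sharper factor $\prod_{j\notin v}I(\alpha_j)\prod_{j\in v}I(q\alpha_j)^{1/q}$, where you use Jensen and $\prod_{j}I(q\alpha_j)^{1/q}$. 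The paper also controls the products via the closed form $I(\alpha)=(1-\alpha\beta)^{-1/\beta}$ when $\beta=\tau$ and a citation to Guth--Kaarnioja when $\beta>\tau$, whereas your bound $I(a)\le 1+Ca$ on $[0,q\Vert\bsalpha\Vert_\infty]$ covers both cases at once, and your Step~1 makes the membership $f\in W^{1,q}_{\mix}(\real^s,\varphi)$ explicit.
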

\begin{proof}
The linearity of $G$ together with  \eqref{eq:normpartialp} gives
\begin{align}\label{eq:partialuf}
|\partial^v f(\bsx)|&\le  \newnorm{G}_{H^{-1}(D)}\newnorm{\partial^v p(\cdot,\bsx)}_{H_0^1(D)} \\
&\lesssim (|v|!)^\sigma\left(\prod_{j\in v} b_j\right)\exp\left\lbrace \sum_{j=1}^s \alpha_j|x_j|^\tau\right\rbrace.\notag
\end{align}
Consequently,
\begin{align}\label{eq:partialvfv}
&\Vert \partial^v f_v \Vert_{L^q(\real^s,\varphi)}=\left(\int_{\real^{|v|}}\Big|\int_{\real^{s-|v|}}\partial^v f(\bsx) \prod_{j\in v^c}\varphi(x_j)\rd \bsx_{v^c} \Big|^q \prod_{j\in v}\varphi(x_j)\rd \bsx_v\right)^{1/q} \\
\lesssim & (|v|!)^{\sigma}\left(\prod_{j\in v} b_j\right)\left(\int_{\real^{s-|v|}}\prod_{j\in v^c} e^{\alpha_j|x_j|^\tau}\varphi(x_j)\rd \bsx_{v^c}\right)\left(\int_{\real^{|v|}} \prod_{j\in v} e^{q\alpha_j|x_j|^\tau}\varphi(x_j)\rd \bsx_v\right)^{1/q}\notag\\
\lesssim&(|v|!)^{\sigma}\left(\prod_{j\in v} b_j\right)\left(\prod_{j\in v^c}I(\alpha_j)\right)\left(\prod_{j\in v} I(q\alpha_j)\right)^{1/q},\notag
\end{align}
where 
$$
I(\alpha) = \int_{\real} e^{\alpha|x|^\tau}\varphi(x)\mrd x=\frac{1}{2\beta^{1/\beta}\Gamma(1+1/\beta)}\int_{\real} e^{\alpha|x|^\tau-\beta^{-1}|x|^\beta}\mrd x.
$$

If $\beta = \tau$, to ensure $I(q\alpha_j)<\infty$ for all $j$, we require that $q\alpha_j<1/\beta$ for all $j$, i.e., $q<1/(\beta\newnorm{\bsalpha}_{\infty})$. Then 
\begin{equation}\label{eqn:Ialpha}
  I(\alpha) = \frac{1}{\beta^{1/\beta}\Gamma(1+1/\beta)}\int_0^{\infty} e^{(\alpha-\beta^{-1})x^\beta}\mrd x=\frac{1}{(1-\alpha\beta)^{1/\beta}}.  
\end{equation}
Since $(1-x)^{-1}\le \exp(x(1-x)^{-1})$ for all $x\in [0,1)$, for any $w\subseteq 1{:}s$,
$$
\prod_{j\in w}I(\alpha_j)\le \prod_{j=1}^s \frac{1}{(1-\alpha_j\beta)^{1/\beta}}\le \exp\left(\sum_{j=1}^s \frac{\alpha_j}{1-\alpha_j\beta}\right)\le\exp\left(\frac{\newnorm{\bsalpha}_{1}}{1-\beta\newnorm{\bsalpha}_{\infty}}\right)<\infty.
$$
By Lemma~\ref{lem:interchange}, Assumption~\ref{assump:interchange} holds for generalized $\beta$-Gaussian densities.
It then follows Lemma~\ref{lem:Poincare} and \eqref{eq:partialvfv} that
\begin{align*}
  \Vert f_v \Vert_{W^{1,q}_{\mix}(\real^s,\varphi)}&\leq (PC_q+1)^{|v|/q} \Vert \partial^v f_v \Vert_{L^q(\real^s,\varphi)} \\
  &\lesssim C_q^{|v|} (|v|!)^{\sigma}\left(\prod_{j\in v} b_j\right)\left(\prod_{j\in v^c}I(\alpha_j)\right)\left(\prod_{j\in v} I(q\alpha_j)\right)^{1/q}\\
  &\lesssim C_q^{|v|} (|v|!)^{\sigma}\left(\prod_{j\in v} b_j\right)\exp\left(\frac{\newnorm{\bsalpha}_{1}}{1-\beta\newnorm{\bsalpha}_{\infty}}+\frac{\newnorm{\bsalpha}_{1}}{1-q\beta \newnorm{\bsalpha}_{\infty}}\right)\\
  &\lesssim (|v|!)^{\sigma}\prod_{j\in v} (C_q b_j),
\end{align*}
where $C_q = (PC_q+1)^{1/q}>0$.

If $\beta>\tau$, by the proof of \cite[Proposition 4.2]{guth:2025}, for any $w\subseteq 1{:}s$,
$$
\prod_{j\in w}I(\alpha_j)\lesssim \exp\left(2\newnorm{\bsalpha}_1\right)<\infty.
$$
We similarly have
\begin{align*}
  \Vert f_v \Vert_{W^{1,q}_{\mix}(\real^s,\varphi)}&\lesssim C_q^{|v|} (|v|!)^{\sigma}\left(\prod_{j\in v} b_j\right)\exp\left(4\newnorm{\bsalpha}_1\right)\lesssim (|v|!)^{\sigma}\prod_{j\in v} (C_q b_j).
\end{align*}
We complete the proof by noticing that $\gamma
_v=\Vert f_v \Vert_{W^{1,q}_{\mix}(\real^s,\varphi)}/\newnorm{f}_q\lesssim \Vert f_v \Vert_{W^{1,q}_{\mix}(\real^s,\varphi)}\lesssim (|v|!)^{\sigma}\prod_{j\in v} \Gamma_j$ for $\Gamma_j=C_qb_j$.
\end{proof}

Theorem~\ref{thm:PODweight} yields product and order dependent (POD) weights $(|v|!)^{\sigma}\prod_{j\in v} \Gamma_j$ for $\gamma_v$ rather than the product weights  $\prod_{j\in v} \Gamma_j$ appearing in Corollary~\ref{cor:qmcvar}.  To reduce the problem to the product weight case, we use the bound
\begin{equation}\label{eqn:PODtoproduct}
\gamma_v\lesssim (|v|!)^{\sigma}\prod_{j\in v} \Gamma_j\le \prod_{j\in v} (j^{\sigma} \Gamma_j)=:\prod_{j\in v} \Gamma'_j,    
\end{equation}
and then apply Corollary~\ref{cor:qmcvar} with $\Gamma_j$ replaced by $\Gamma'_j=j^{\sigma} \Gamma_j$. 

Suppose that $b_j=O(j^{-\rho^*})$ for some $\rho^*>0$.
Then $\gamma_v\lesssim \prod_{j\in v}\Gamma'_j$ with $\Gamma'_j = O(j^{-\rho})$ and $\rho=\rho^*-\sigma$. If $\rho>\max(2/q,1)$, as discussed in Remark~\ref{rmk:optimalalpha},  the proposed method attains a dimension-independent MSE rate of nearly $O(n^{-1-\alpha^*(q,\rho)})$, where  $\alpha^*(q,\rho)$ is given by \eqref{eq:alphastar}. Following Remark~\ref{rem:qinfinity} and taking $q$ as large as permitted by Theorem~\ref{thm:PODweight}, we obtain the dimension-independent convergence rates stated in the following corollary.
%\red{This relaxation is tight in the sense that
% $\sum_{v\subseteq \natu }  \gamma'_vm^{|v|}$ is sub-exponential in $m$ if and only if $\rho-\rho'>1$ (see \cite[Theorem 3]{pan2025sharpconvergenceboundssums} for a rigorous statement).} 

\begin{corollary}\label{cor:pod}
    Consider the settings in Theorem~\ref{thm:PODweight} and assume that $b_j=O(j^{-\rho^*})$ with $\rho=\rho^*-\sigma>1$. If $\beta=\tau<1/\newnorm{\bsalpha}_{\infty}$, we additionally require $\rho>2\beta\newnorm{\bsalpha}_{\infty}$.  By setting $q^*=1/\beta\newnorm{\bsalpha}_{\infty}$ and taking
    \begin{equation*}
    \theta_j=
    \begin{cases}
    \theta_0 j^{-\rho q^*},&\text{ if } \beta=\tau<1/\newnorm{\bsalpha}_{\infty}, \ q^*\in (1,2]\\
     \theta_0 j^{-2\rho/(2\alpha^* /q^*+1-\alpha^* )}, &\text{ if } \beta=\tau<1/\newnorm{\bsalpha}_{\infty}, \ q^*>2\\
        \theta_0 j^{-2\rho/(3-2\rho)}, &\text{ if }\beta>\tau, \ \rho\in (1,3/2)\\
    0, &\text{ if } \beta>\tau,\ \rho\ge 3/2,
    \end{cases}
    \end{equation*}
    with $\theta_0\in(0,1/2)$,
    we have
    \begin{equation*}
       \e \Big|\frac{1}{n}\sum_{i=0}^{n-1}f^w(\bsu_i)-\int_{\real^s} f(\bsx) \prod_{j=1}^s \varphi(x_j)\rd \bsx\Big|^2 
           \le C n^{-1-\alpha^*+\varepsilon}
    \end{equation*}
    with
    \begin{align*}
      \alpha^* =
\begin{cases}
  \frac{\rho q^* -2}{\rho q^*+1}, &\text{ if } \beta=\tau<1/\newnorm{\bsalpha}_{\infty}, \ q^*\in (1,2]\\
   1-\frac{\sqrt{((2\rho-3)q^*+8)^2+24(q^*-2)}-(2\rho-3)q^*-8}{2(q^*-2)}, &\text{ if } \beta=\tau<1/\newnorm{\bsalpha}_{\infty}, \ q^*>2\\
   2\rho-2, &\text{ if } \beta>\tau,\ \rho\in (1,3/2)\\
   1, &\text{ if } \beta>\tau,\ \rho\ge 3/2,\\
\end{cases}      
    \end{align*}
where $C>0$ is a constant independently of the dimension $s$ and $\varepsilon>0$ can be arbitrarily small. 
\end{corollary}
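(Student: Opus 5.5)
The plan is to reduce Corollary~\ref{cor:pod} to Corollary~\ref{cor:qmcvar}, with the parameter choices taken from Remarks~\ref{rmk:optimalalpha} and~\ref{rem:qinfinity}. First fix the integrability exponent $q$. In the case $\beta=\tau$ we cannot take $q=q^*$ itself, since Theorem~\ref{thm:PODweight} only covers the open range $q<q^*$. So take $q<q^*$ arbitrarily close to $q^*$. The map $q\mapsto\alpha^*(q,\rho)$ in \eqref{eq:alphastar} is continuous, including across $q=2$, and the condition $\rho>\max(2/q,1)$ is open in $q$; here we need $\rho>2/q^*=2\beta\Vert\bsalpha\Vert_\infty$, which is exactly the extra hypothesis. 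Hence the loss from $q<q^*$ is absorbed into $\varepsilon$. In the case $\beta>\tau$, every $q>1$ is allowed, and we take $q$ large. Then $\alpha^*(q,\rho)\to\min\{2\rho-2,1\}$ by Remark~\ref{rem:qinfinity}, and again the gap is absorbed into $\varepsilon$.

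Next, verify the hypotheses of Corollary~\ref{cor:qmcvar}.
\begin{itemize}
\item Assumption~\ref{assump:rho} holds for the $\beta$-Gaussian density, because its tail satisfies $\Phi(x)\lesssim \varphi(x)|x|^{1-\beta}$ (or a similar bound for $\beta<1$). This gives $\varphi\ge c_\varepsilon\Phi^{1+\varepsilon}$.
\item Assumption~\ref{assump:Poincare} follows from Lemma~\ref{lem:Poincareinequality} via the Mills-ratio remark. This requires $\beta\ge1$; I would note this as implicitly assumed through Theorem~\ref{thm:PODweight}.
\item Theorem~\ref{thm:PODweight} together with \eqref{eqn:PODtoproduct} gives $\gamma_v\lesssim\prod_{j\in v}\Gamma'_j$ with $\Gamma'_j=O(j^{-\rho})$. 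The implied constant $C_0$ can be pushed into the product by writing $\gamma_v\le C_0\prod\Gamma'_j$. Alternatively, since $\gamma_v\le1$, we can use $\gamma_v\le\prod_{j\in v}\max(C_0,1)\Gamma'_j$ for $v\neq\emptyset$, which leaves the decay rate unchanged.
\item The $t$-values satisfy \eqref{eqn:tutj} with $t_j=O(\log_b j)$ for Sobol' and Niederreiter nets.
\end{itemize}
Finally, $\Vert f\Vert_q$ is bounded independently of $s$. This follows from the proof of Theorem~\ref{thm:PODweight} with $v=\emptyset$, where the product of the $I(\alpha_j)$ factors is uniformly bounded, and from the bound for general $v$ with $\sup_v(|v|!)^\sigma\prod_{j\in v}\Gamma_j<\infty$ because $\Gamma_j\to0$.

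Then plug in the stated $\theta_j$ and check that $S_\lambda=\sum_j\tilde\Gamma_j^\lambda$ is finite uniformly in $s$ for some $\lambda<1$. We use the exponent computations of Remark~\ref{rmk:optimalalpha}, with $\alpha$ slightly below $\alpha^*$ so that the inequality $1+\alpha-\dots<-1$ is strict and leaves room for $\lambda<1$. For the rows with $\theta_j=0$, Theorem~\ref{thm:qmcvar} does not apply directly, since it requires $\theta_j\in(0,1/2]$. I would handle this by appealing to the inversion-method result cited in Remark~\ref{rem:qinfinity} \cite{dick2025quasi}, or alternatively by taking $\theta_j=\theta_0j^{-K}$ with $K$ huge, which makes the $\theta^{1-\alpha}$ term negligible for $\alpha<1$. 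A further issue: the stated $\theta_j$ use $\alpha^*$, $q^*$ rather than the perturbed $\alpha$, $q$. The decay exponent of $\tilde\Gamma_j$ depends continuously on these parameters, so a small mismatch costs only an $\varepsilon$ in the rate. This requires recomputing the balancing slightly unevenly rather than exactly; this bookkeeping is the main obstacle I expect.

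Corollary~\ref{cor:qmcvar} then gives $\mathrm{MSE}\le\Vert f\Vert_q^2n^{-1-\alpha}\exp(C_\lambda S_\lambda m^\lambda)$. Since $\exp(Cm^\lambda)=o(n^{\varepsilon})$ with $m=\log_b n$, this yields $Cn^{-1-\alpha^*+\varepsilon}$ with $C$ independent of $s$.
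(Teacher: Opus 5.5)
Your proposal is correct and follows the paper's own route. The paper also reduces the POD bound of Theorem~\ref{thm:PODweight} to product weights via \eqref{eqn:PODtoproduct}, feeds $\Gamma'_j=O(j^{-\rho})$ into Corollary~\ref{cor:qmcvar} with the exponent balancing of Remarks~\ref{rmk:optimalalpha} and~\ref{rem:qinfinity}, and pushes $q$ to the edge of its admissible range, with the losses absorbed into $\varepsilon$; as in the paper, the $\theta_j=0$ row rests on the inversion-method result rather than on Theorem~\ref{thm:qmcvar}. Your extra bookkeeping ($q<q^*$, $\alpha<\alpha^*$, $\beta\ge 1$ for Assumption~\ref{assump:Poincare}) fills in what the paper leaves implicit, but two small points need correcting: bounding $\sup_v(|v|!)^\sigma\prod_{j\in v}\Gamma_j$ needs $j^\sigma\Gamma_j\to 0$, not merely $\Gamma_j\to 0$, and your $\theta_0 j^{-K}$ fallback would only prove a modified statement in which $K$ depends on $\varepsilon$.
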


\begin{remark}
    When $\beta=\tau$, the BDIS method attains an MSE rate arbitrarily close to $O(n^{-2})$  as $\rho\to\infty$. This is in contrast with the inversion method, whose MSE rate cannot converge faster than $O(n^{-2+2\beta\newnorm{\bsalpha}_{\infty}})$ even when $\rho\to\infty$ \cite[Theorem 6.2]{guth:2025}.
\end{remark}

\section{Connection with weighted Sobolev norms}\label{sec:weightedSobolev}

When using lattice rules, a common way to characterize $f$ is through the following weighted Sobolev norm \cite{graham:2015}:
$$\Vert f \Vert_{s,\bsgamma'}=\left[\sum_{v\subseteq 1{:}s}\frac1{\gamma'_v} \int_{\real^{|v|}}\Big(\int_{\real^{s-|v|}}\partial^v f(\bsx) \prod_{j\in v^c}\varphi(x_j)\rd \bsx_{v^c} \Big)^2 \prod_{j\in v}\psi_j^2(x_j)\rd \bsx_v\right]^{1/2},$$
where $\bsgamma'=\{\gamma'_v\mid v\subseteq 1{:}s\}$ is a set of positive weights and $\psi_j(x)$ is a coordinate-dependent weight function. Under appropriate choice of $\bsgamma'$ and $\psi(x)$, \cite{graham:2015} proves $\Vert f \Vert_{s,\bsgamma'}$ can be bounded independent of $s$ and efficient lattice-based QMC  rules can be constructed to estimate $\mathbb{E}[f(\bsx)]$.

Below we show $\Vert f \Vert_{s,\bsgamma'}$ naturally upper bounds $\Vert f\Vert_2$ defined as in Theorem~\ref{thm:qmcvar}, thereby bridging the framework in \cite{graham:2015} with ours. For simplicity, we restrict our discussion to the case $\psi_j^2(x)=\varphi(x)$. When $x_j$ is normally distributed, the Gaussian tails of this $\psi_j(x)$ yield a weaker norm than the exponential-tailed alternative in \cite{graham:2015}, thereby accommodating a broader class of functions.

To state the next theorem, we generalize $\Vert f \Vert_{s,\bsgamma'}$ by drawing an analogy to the $\Vert \cdot \Vert_{s,\alpha,\bsgamma,q,r}$-norm introduced in \cite{dick2014higher,dick2016multilevel} and defining
$$\Vert f \Vert_{s,\bsgamma',q,r}=\left[\sum_{v\subseteq 1{:}s}\Bigg(\frac{1}{\gamma'_v}\int_{\real^{|v|}}\Bigg|\int_{\real^{s-|v|}}\partial^v f(\bsx) \prod_{j\in v^c}\varphi(x_j)\rd \bsx_{v^c} \Bigg|^q \prod_{j\in v}\varphi(x_j)\rd \bsx_v\Bigg)^{r/q}\right]^{1/r}$$
for $r\in [1,\infty)$, and 
$$\Vert f \Vert_{s,\bsgamma',q,\infty}=\sup_{v\subseteq 1{:}s}\Bigg(\frac{1}{\gamma'_v}\int_{\real^{|v|}}\Bigg|\int_{\real^{s-|v|}}\partial^v f(\bsx) \prod_{j\in v^c}\varphi(x_j)\rd \bsx_{v^c} \Bigg|^q \prod_{j\in v}\varphi(x_j)\rd \bsx_v\Bigg)^{1/q}.$$

\begin{theorem}\label{thm:weightednorm}
    Suppose Assumptions~\ref{assump:Poincare} and \ref{assump:interchange} hold with $q>1$. Then for $r\in [1,\infty]$ and $f\in W^{1,q}_{\mix}(\real^s,\varphi)$ with $\Vert f \Vert_{s,\bsgamma',q,r}<\infty$,
    \begin{equation}\label{eqn:leqweightednorm}
     \Vert f\Vert_q\leq \begin{dcases}
    \left[\sum_{v\subseteq 1{:}s}\Bigg(\frac{\gamma^q_v}{(PC_q+1)^{|v|}\gamma'_v}\Bigg)^{r/q}\right]^{-1/r}\Vert f \Vert_{s,\bsgamma',q,r}, &\text{ if } r\in [1,\infty) \\
   \sup_{v\subseteq 1{:}s}\left(\frac{\gamma^q_v}{(PC_q+1)^{|v|}\gamma'_v}\right)^{-1/q}   \Vert f \Vert_{s,\bsgamma',q,\infty} , &\text{ if } r=\infty
\end{dcases}
     , 
    \end{equation}
    and
    \begin{equation}\label{eqn:gammavfq}
      \gamma_v\Vert f\Vert_q\leq \left((PC_q+1)^{|v|}\gamma'_v\right)^{1/q}\Vert f \Vert_{s,\bsgamma',q,r}, 
    \end{equation}
    where $\Vert f\Vert_q$ and $\gamma_v$ are defined in \eqref{eq:fq} and \eqref{eq:gammav}, respectively.
\end{theorem}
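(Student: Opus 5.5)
The plan is to reduce both inequalities to a single per-term bound linking $\Vert f_v\Vert_{W^{1,q}_{\mix}(\real^s,\varphi)}$ to the $v$-th summand of $\Vert f \Vert_{s,\bsgamma',q,r}$. Denote that summand by
$$A_v:=\int_{\real^{|v|}}\Big|\int_{\real^{s-|v|}}\partial^v f(\bsx)\prod_{j\in v^c}\varphi(x_j)\rd\bsx_{v^c}\Big|^q\prod_{j\in v}\varphi(x_j)\rd\bsx_v .$$

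\textbf{Step 1.} Show $\Vert \partial^v f_v\Vert^q_{L^q(\real^s,\varphi)}=A_v$. By definition $f_v=\prod_{j\in v}(I-P_j)P_{v^c}f$. Assumption~\ref{assump:interchange} lets $\partial^v$ pass through the integrations. After that, $\partial^v$ kills every term of the expansion of $\prod_{j\in v}(I-P_j)$ that contains some $P_j$ with $j\in v$, because such a term no longer depends on $x_j$. Hence
$$\partial^v f_v=\partial^v P_{v^c}f=\int_{\real^{s-|v|}}\partial^v f(\bsx)\prod_{j\in v^c}\varphi(x_j)\rd\bsx_{v^c}.$$
This is exactly the computation already used implicitly in \eqref{eq:partialvfv}. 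Integrating its $q$-th power against $\prod_{j\in v}\varphi(x_j)$, and noting it does not depend on $\bsx_{v^c}$, gives the claim.

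\textbf{Step 2.} Combining Step 1 with Lemma~\ref{lem:Poincare} gives
$$\Vert f_v\Vert^q_{W^{1,q}_{\mix}(\real^s,\varphi)}\le (PC_q+1)^{|v|}A_v .$$
Since $\Vert f_v\Vert_{W^{1,q}_{\mix}(\real^s,\varphi)}=\gamma_v\Vert f\Vert_q$ by \eqref{eq:gammav}, this can be rewritten as
$$\Big(\frac{\gamma_v^q\,\Vert f\Vert_q^q}{(PC_q+1)^{|v|}\gamma'_v}\Big)^{1/q}\le\Big(\frac{A_v}{\gamma'_v}\Big)^{1/q}.$$
This also holds when $\Vert f\Vert_q=0$, where both conventions give $0$.

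\textbf{Step 3.} For \eqref{eqn:gammavfq}, bound the right side above by $\Vert f\Vert_{s,\bsgamma',q,r}$, since a single term of an $\ell^r$ sum, or a sup, is at most the whole. Multiplying through gives the claim. For \eqref{eqn:leqweightednorm} with $r<\infty$, raise the inequality of Step 2 to the power $r$ and sum over $v$:
$$\Vert f\Vert_q^r\sum_v\Big(\frac{\gamma_v^q}{(PC_q+1)^{|v|}\gamma'_v}\Big)^{r/q}\le \Vert f\Vert^r_{s,\bsgamma',q,r}.$$
Then divide. The sum is positive whenever $\Vert f\Vert_q>0$, because some $\gamma_v=1$ by the definition of the sup. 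If $\Vert f\Vert_q=0$ the inequality is trivial. For $r=\infty$, take the sup over $v$ instead of summing.

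\textbf{Main obstacle.} The only delicate point is Step 1: rigorously justifying $\partial^v f_v=\partial^v P_{v^c}f$ in the weak sense and making sure the interchange assumption applies to each term. The rest is bookkeeping.
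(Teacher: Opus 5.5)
Your proposal is correct and follows essentially the same route as the paper. You identify $\partial^v f_v$ with $\int_{\real^{s-|v|}}\partial^v f\prod_{j\in v^c}\varphi(x_j)\rd\bsx_{v^c}$ via Assumption~\ref{assump:interchange} (the paper writes this as $\partial^v\sum_{\omega\subseteq v}f_\omega=\partial^v f_v$, you as $\partial^v$ killing the terms of $\prod_{j\in v}(I-P_j)$ that contain some $P_j$), then apply Lemma~\ref{lem:Poincare} termwise and finish by summing over $v$, keeping a single term, or taking the sup; your extra handling of the degenerate case $\Vert f\Vert_q=0$ and of the positivity of the sum is a small bonus the paper omits.
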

\begin{proof}
We prove the result for $r\in[1,\infty)$, as the  $r=\infty$ case follows from a similar argument.
    By the definition of ANOVA decomposition and Assumption~\ref{assump:interchange}, for every $v\subseteq 1{:}s$,
    $$\int_{\real^{s-|v|}}\partial^v f(\bsx) \prod_{j\in v^c}\varphi(x_j)\rd \bsx_{v^c}=\partial^v\Big(\sum_{\omega\subseteq v} f_\omega (\bsx)\Big)=\partial^vf_v(\bsx).$$
    Therefore, Lemma~\ref{lem:Poincare} implies
    $$\Vert f \Vert^r_{s,\bsgamma',q,r}=\sum_{v\subseteq 1{:}s} \Big(\frac{1}{\gamma'_v} \Vert \partial^v f_v \Vert^q_{L^q(\real^s,\varphi)}\Big)^{r/q}\geq \sum_{v\subseteq 1{:}s} \Big(\frac{1}{\gamma'_v (PC_q+1)^{|v|}} \Vert f_v \Vert^q_{W^{1,q}_{\mix}(\real^s,\varphi)} \Big)^{r/q}.$$
      Plugging in $ \Vert f_v \Vert_{W^{1,q}_{\mix}(\real^s,\varphi)}=\gamma_v \Vert f\Vert_q$ yields equation~\eqref{eqn:leqweightednorm}.  Bounding the sum over $v\subseteq 1{:}s$ from below by one of its terms yields Equation~\eqref{eqn:gammavfq}. 
\end{proof}

% \begin{theorem}\label{thm:weightednorm}
%     Suppose $\varphi$ satisfies Assumption~\ref{assump:Poincare} with $q>1$. Then for $f\in W^{1,q}_{\mix}(\real^s,\varphi)$ with $\Vert f \Vert_{s,\bsgamma',q}<\infty$,
%     \begin{equation}\label{eqn:gammavfq}
%      \gamma_v\Vert f\Vert_q\leq \left((PC_q+1)^{|v|}\gamma'_v\right)^{1/q}\Vert f \Vert_{s,\bsgamma',q} 
%     \end{equation}
%     and 
%     $$ \Vert f\Vert_q\leq \Big(\sup_{v\subseteq 1{:}s}(PC_q+1)^{|v|}\gamma'_v\Big)^{1/q}\Vert f \Vert_{s,\bsgamma',q},$$
%     where $\gamma_v$ and $\Vert f\Vert_q$ are defined as in Theorem~\ref{thm:qmcvar}.
% \end{theorem}
% \begin{proof}
%     By the definition of ANOVA decomposition,
%     $$\int_{\real^{s-|v|}}\partial^v f(\bsx) \prod_{j\in v^c}\varphi(x_j)\rd \bsx_{v^c}=\partial^v\Big(\sum_{\omega\subseteq v} f_\omega (\bsx)\Big)=\partial^vf_v(\bsx).$$
%     Therefore, for every $v\subseteq 1{:}s$, Lemma~\ref{lem:Poincare} implies
%     $$\Vert f \Vert^q_{s,\bsgamma',q}\geq \frac{1}{\gamma'_v} \Vert \partial^v f_v \Vert^q_{L^q(\real^s,\varphi)}\geq \frac{1}{\gamma'_v (PC_q+1)^{|v|}}  \Vert f_v \Vert^q_{W^{1,q}_{\mix}(\real^s,\varphi)}.$$
%     The first conclusion follows after plugging in $ \Vert f_v \Vert_{W^{1,q}_{\mix}(\real^s,\varphi)}=\gamma_v \Vert f\Vert_q$. The second conclusion follows because $\sup_{v\subseteq 1{:}s} \gamma_v=1$.
% \end{proof}

\begin{corollary}\label{cor:weightednorm}
    Under the assumptions of Theorems~\ref{thm:qmcvar} and \ref{thm:weightednorm},
    \begin{equation}\label{eqn:weightednorm}
  \e \Big|\frac{1}{n}\sum_{i=0}^{n-1}f^w(\bsu_i)-\int_{\real^s} f(\bsx) \prod_{j=1}^s \varphi(x_j)\rd \bsx\Big|^2 
            \leq \frac{\Vert f \Vert^2_{s,\bsgamma',q,r}}{n^{1+\alpha}}\sum_{\emptyset \neq \omega\subseteq 1{:}s }  C^{|\omega|}_{**} m^{|\omega|-1}\Tilde{\gamma}'_\omega,    
    \end{equation}
            where $C_{**}$ is a constant depending on $\varepsilon,p,q,b,\alpha$ and $PC_q$, and
            $$\Tilde{\gamma}'_\omega=b^{(1+\alpha)t_\omega} \sum_{v\subseteq \omega}(\gamma'_v)^{2/q}\prod_{j\in v}  \theta^{-\nu_{\alpha,q}-\varepsilon}_j \prod_{j\in \omega\setminus v} \theta^{1-\alpha}_j.$$
\end{corollary}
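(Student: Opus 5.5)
The corollary follows by inserting the bound \eqref{eqn:gammavfq} of Theorem~\ref{thm:weightednorm} into the main estimate \eqref{eqn:maintheorem} of Theorem~\ref{thm:qmcvar}. On the right-hand side of \eqref{eqn:maintheorem}, $\Vert f\Vert_q^2$ multiplies $\gamma_v^2$ in every summand of $\Tilde{\gamma}_\omega$. So I will first rewrite the bound with $\Vert f\Vert_q^2$ moved inside:
$$\frac{1}{n^{1+\alpha}}\sum_{\emptyset\neq\omega\subseteq 1{:}s}C_*^{|\omega|}m^{|\omega|-1}b^{(1+\alpha)t_\omega}\sum_{v\subseteq\omega}\big(\gamma_v\Vert f\Vert_q\big)^2\prod_{j\in v}\theta_j^{-\nu_{\alpha,q}-\varepsilon}\prod_{j\in\omega\setminus v}\theta_j^{1-\alpha}.$$
This rewriting is valid whether or not $\Vert f\Vert_q=0$, since in the degenerate case both sides vanish.

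Next I will square \eqref{eqn:gammavfq}, which gives
$$(\gamma_v\Vert f\Vert_q)^2\le (PC_q+1)^{2|v|/q}(\gamma'_v)^{2/q}\Vert f\Vert^2_{s,\bsgamma',q,r}$$
for every $v\subseteq 1{:}s$. Substituting this pulls out the common factor $\Vert f\Vert^2_{s,\bsgamma',q,r}$. What remains is the factor $(PC_q+1)^{2|v|/q}$, and it has to be absorbed into the constant.

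Because $v\subseteq\omega$ and $PC_q+1\ge 1$, we have $(PC_q+1)^{2|v|/q}\le (PC_q+1)^{2|\omega|/q}$. Setting $C_{**}=C_*(PC_q+1)^{2/q}$ then turns $C_*^{|\omega|}(PC_q+1)^{2|v|/q}$ into at most $C_{**}^{|\omega|}$. This $C_{**}$ depends on $\varepsilon,p,q,b,\alpha$ through $C_*$ and on $PC_q$, as claimed. The inner sum is then exactly $\Tilde{\gamma}'_\omega$, which gives \eqref{eqn:weightednorm}.

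The only point needing care is that \eqref{eqn:gammavfq} holds for every $r\in[1,\infty]$ with the same right-hand side. This is what lets the single quantity $\Vert f\Vert_{s,\bsgamma',q,r}$ factor out for all $v$ at once; the rest is bookkeeping.
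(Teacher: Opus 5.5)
Your proposal is correct and is the paper's argument: the paper simply states that the corollary is an immediate consequence of \eqref{eqn:maintheorem} and \eqref{eqn:gammavfq}. You supply the omitted bookkeeping, namely squaring \eqref{eqn:gammavfq}, using $(PC_q+1)^{2|v|/q}\le (PC_q+1)^{2|\omega|/q}$ for $v\subseteq\omega$, and setting $C_{**}=C_*(PC_q+1)^{2/q}$.
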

\begin{proof}
     This is an immediate consequence of \eqref{eqn:maintheorem} and \eqref{eqn:gammavfq}.
\end{proof}

Corollary~\ref{cor:weightednorm} suggests our results in Section~\ref{sec:main} are applicable to $f$ with $\Vert f \Vert_{s,\bsgamma',q,r}<\infty$ by simply replacing $\Vert f\Vert_q$ with $\Vert f \Vert_{s,\bsgamma',q,r}$ and $\gamma_v$ with $(\gamma'_v)^{1/q}$.

\begin{remark}
   When $\gamma'_v$ takes the POD form described in Section~\ref{sec:pdes}, an argument analogous to \eqref{eqn:PODtoproduct} reduces the problem to the product weight case. See \cite{pan2025sharpconvergenceboundssums} for further estimates of the right-hand side of \eqref{eqn:weightednorm} when $\gamma'_v$ takes a non-product form. 
\end{remark}

\section{Numerical experiments}\label{sec:numer}

In this section, we present numerical results to validate the efficiency of the proposed method by  comparing the convergence rates of BDIS-based RQMC against  MC and standard RQMC (using inversion method, i.e., all $\theta_j=0$). We use scrambled Sobol' sequences \cite{rtms} for RQMC methods. For the BDIS methods, we take $p=1$ in \eqref{eq:tildeetap} and compare several choices of $w_j=w_{\theta_j}$.

Now consider the elliptic PDE problem \eqref{eq:EPDE} on the unit square domain $D = (0, 1)^2$ with a deterministic source term given by $g(\bsy) = y_2$ for $\bsy=(y_1,y_2)$. The diffusion coefficient $a(\bsy,\omega)$ follows a truncated random field model:
\begin{equation}
    a(\bsy, \omega) = a_s(\bsy, \bsx)=\exp\left( \sum_{j=1}^{s} x_j \xi_j(\boldsymbol{y}) \right),
\end{equation}
where $x_j$ are independent $\beta$-Gaussian random variables with density given by \eqref{eq:betadensity}, and the basis functions $\{\xi_j\}_{j=1}^s$ share a fixed spatial mode, with amplitudes decaying according to the index $j$:
\begin{equation}
    \xi_j(\bsy) = \zeta \cdot j^{-\rho^*} \sin(\pi y_1) \sin(\pi y_2), 
\end{equation}
where $\zeta>0$ and $j=1, \dots, s$.
The parameter $\rho^* > 1$ controls the decay rate of the fluctuations, thereby determining the decay rate of $\gamma_v$. As shown in \cite{graham:2015}, this model satisfies \eqref{eq:normpartialp} with  $
\tau=\sigma=1$, $\alpha_j=\newnorm{\xi_j(\bsy)}_{L^\infty(\Bar{D})}=\zeta \cdot j^{-\rho^*}$ and $b_j=O(j^{-\rho^*})$. Clearly, we have  $\newnorm{\bsalpha}_{\infty}=\zeta$ and $\newnorm{\bsalpha}_{1}<\infty$ since $\rho^*>1$.

The spatial domain is discretized using the finite element method with continuous piecewise linear ($P_1$) basis functions on a uniform triangular mesh. We set the mesh refinement level to $N_{\text{mesh}}=8$, resulting in a triangulation with $h \approx 1/8$. In the following analysis, we take no account of the dimension truncation error and the spatial discretization error, which are independent of the quadrature error. We refer to \cite{guth:2025,kuo:nuye:2016} for a comprehensive analysis of the total error.  For a given truncation dimension $s$ and a mesh refinement level, we report  root mean squared errors (RMSEs) of different quadrature rules.  We fix the truncation dimension to $s=128$.

For each fixed sample size $n$, we perform $R=64$ independent scrambling for RQMC methods. Let $\hat p_n^{(r)}(\cdot)$ denote the numerical approximation of the solution $p(\cdot,\bsx)$ based on the $r$-th scrambling of $n$ Sobol' points, $r=1, \dots, R$. The overall estimator for the expected value $\mathbb{E}[p(\cdot,\bsx)]$ is given by the sample mean over the $R$ scramblings:
\begin{equation}
    \bar p_{n,R}(\cdot)= \frac{1}{R} \sum_{r=1}^{R} \hat p_n^{(r)}(\cdot).
\end{equation}

The convergence behavior is analyzed by computing the RMSE as the sample size $n$ increases geometrically from $2^6$ to $2^{16}$. We estimate the RMSE using the sample standard deviation over the $R$ independent replications. Specifically, we measure the error in two ways: the $H_0^1(D)$-norm error
\begin{equation*}
    \hat{\sigma}_{1} =\sqrt{\frac{1}{R-1} \sum_{r=1}^{R} \|\bar p_{n,R}- \hat p_n^{(r)}\|_{H_0^1(D)}^2}= \sqrt{\frac{1}{R-1} \sum_{r=1}^{R} \|\nabla\bar p_{n,R}- \nabla \hat p_n^{(r)} \|_{L_2(D)}^2},
\end{equation*}
where the integrals in the $L_2(D)$-norm are computed exactly using the finite element representation of $\hat p_n^{(r)}$, and the pointwise error at the domain center $\bsy_c = (0.5, 0.5)$
\begin{equation*}
    \hat{\sigma}_{2} = \sqrt{\frac{1}{R-1} \sum_{r=1}^{R} \left| \bar p_{n,R}(\bsy_c) - \hat p_n^{(r)}(\bsy_c) \right|^2}.
\end{equation*}

We set the decay parameter $\rho^*=4$, $\newnorm{\bsalpha}_{\infty}=\zeta=2/3$, and the distribution parameter $\beta=1$. Since $\tau=\beta=1$, the integrand belongs to $W^{1,q}_{\mix}(\real^s,\varphi)$ with $1<q<1/(\beta\newnorm{\bsalpha}_{\infty})=3/2$. For this setting, we have $\gamma_v\le  \prod_{j\in v}\Gamma_j$ with $\Gamma_j = O(j^{-\rho})$ and $\rho=\rho^*-\sigma=3$. As suggested by Corollary~\ref{cor:pod}, we set $\theta_j = \theta_0 j^{-4.5}$ to attain a dimension-independent RMSE rate of nearly $O(n^{-8/11})$. We also consider other two cases:  $\theta_j = \theta_0j^{-2}$ and $\theta_{j} = \theta_0j^{-6}$. The former case is an underestimate of the optimal decay rate of $\theta_j$, while the latter case is an overestimate derived by neglecting the factorial term $(|v|!)^\sigma$ (originating from \eqref{eq:normpartialp}) in the upper bound \eqref{eq:boundgammav} of $\gamma_v$, resulting in $\rho=\rho^*=4$. We set $\theta_0=0.1$ in our numerical experiments.

Figure~\ref{fig:convergence_results} reports the results. Due to the rapid boundary growth of the integrand, the standard RQMC method converges slowly, whereas the proposed BDIS method with three choices of $\theta_j$ yields a faster convergence rate than the usual Monte Carlo rate $O(n^{-1/2})$. For the case of $\theta_j=0.1 j^{-4.5}$, the BDIS method yields an RMSE close to $O(n^{-8/11})\approx O(n^{-0.73})$, which is consistent with our theory. Interestingly, the BDIS method with $\theta_j=0.1 j^{-6}$, obtained by omitting the factorial term, performs even better. This omission strategy was previously employed in \cite{kaarnioja:2022b} to define a new class of product weights for lattice-based kernel approximation, leading to improved performance in high dimensions compared to smoothness-driven POD weights (which include the factorial term). These results suggest that the estimate $\sigma=1$ in \eqref{eq:normpartialp} may be overly conservative.

\begin{figure}[ht]
    \centering
    \begin{subfigure}[b]{0.48\textwidth}
        \centering
        \includegraphics[width=\textwidth]{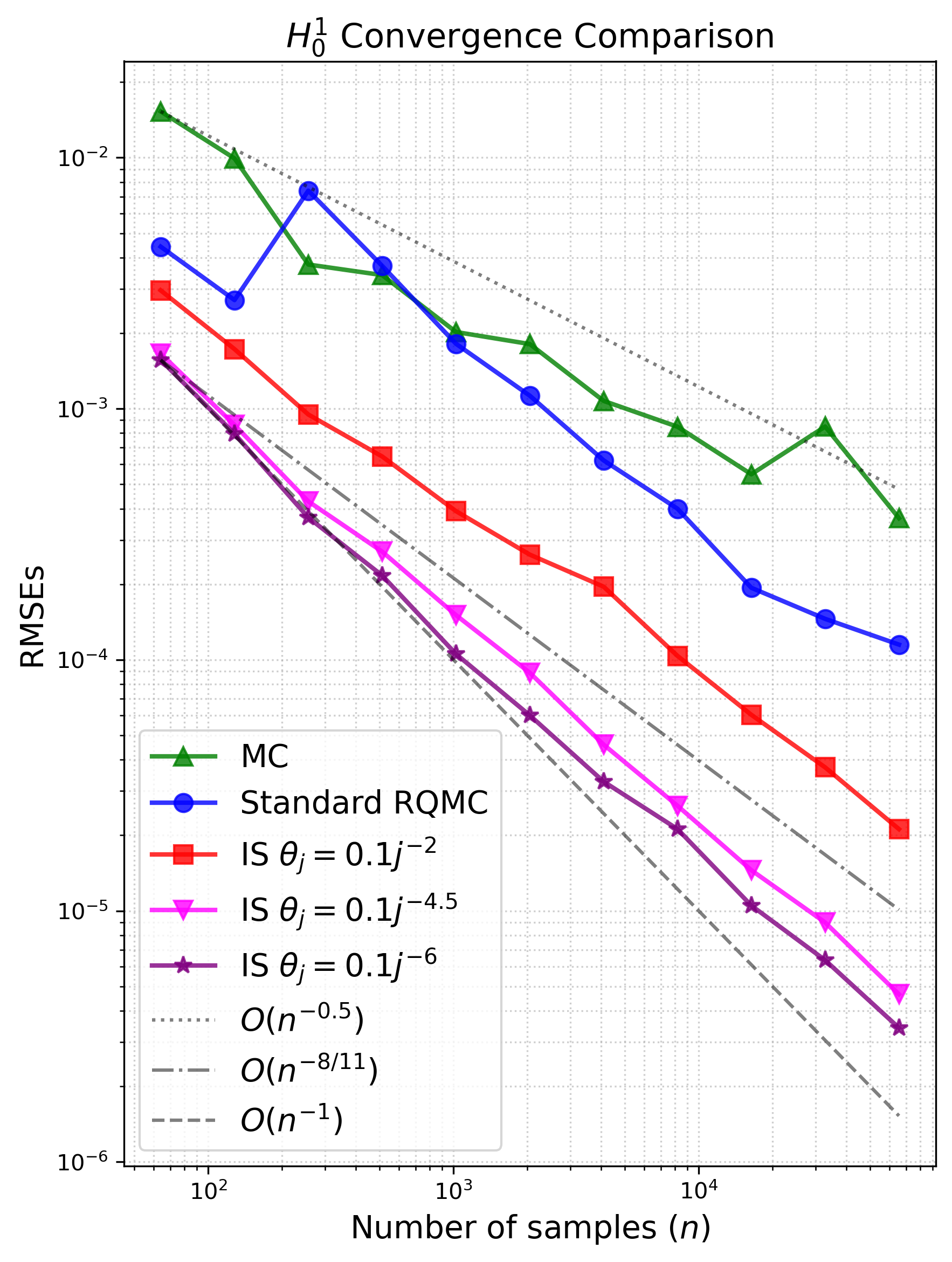} 
        \caption{$H_0^1(D)$ Error}
        \label{fig:conv_h1}
    \end{subfigure}
    \hfill 
    \begin{subfigure}[b]{0.48\textwidth}
        \centering
        \includegraphics[width=\textwidth]{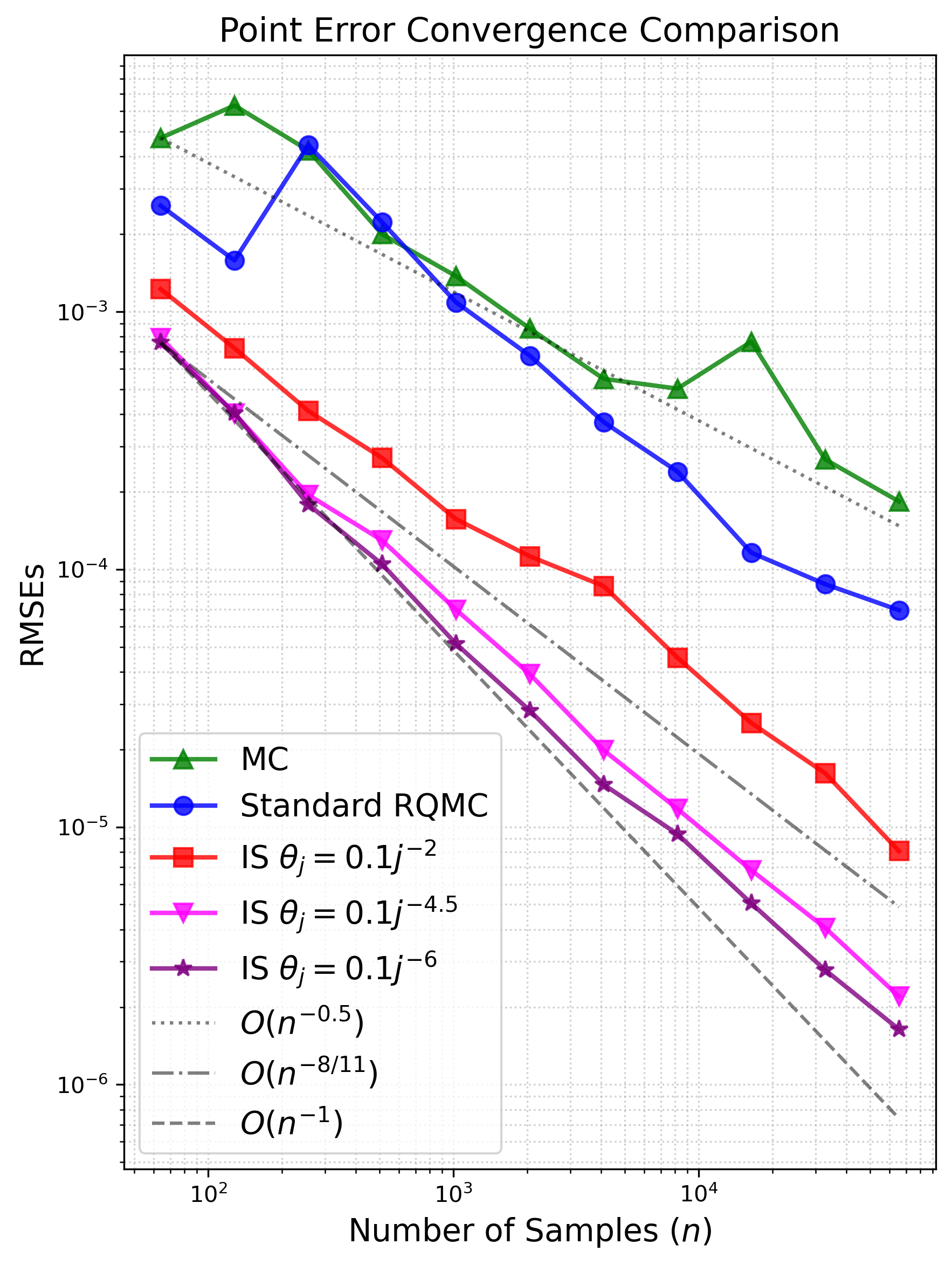} 
        \caption{Pointwise Error at $\boldsymbol{x}_c$}
        \label{fig:conv_point}
    \end{subfigure}

    \caption{Convergence results of the RMSEs for  $s=128$, $q\in (1,3/2)$, $\ \beta=\tau=1,\ \zeta = 2/3,\ \rho^*=4$. The plots compare the proposed BDIS method ($\theta_{j}=0.1j^{-2}$, $0.1j^{-4.5}$ or $0.1j^{-6}$) against standard RQMC and MC methods. The dashed reference lines indicate $O(n^{-0.5})$, $O(n^{-8/11})$ and $O(n^{-1})$ convergence, respectively.}
    \label{fig:convergence_results}
\end{figure}

Figure~\ref{fig:convergence_results2} reports the RMSEs for the case $\rho^*=2, \zeta=1/2$, and $\beta=1$, for which the integrand belongs to $W^{1,q}_{\mix}(\real^s,\varphi)$ with $q\in (1,2)$. Corollary~\ref{cor:pod} calls for  the choice $\theta_j=0.1j^{-2}$, while omitting the factorial term motivates the alternative $\theta_j=0.1j^{-4}$. Compared to the previous setting, the value $q$ increases to nearly $2$, and we observe a corresponding improvement in the RMSE rate of standard RQMC. Since $q$ cannot be taken arbitrarily large, there remains room for improvement over standard RQMC via BDIS.  Our BDIS method with $\theta_j=0.1j^{-4}$ and $\theta_j=0.1j^{-6}$ clearly outperforms standard RQMC, while the choice $\theta_j=0.1j^{-2}$ performs slightly worse than standard RQMC. These findings again suggest that omitting the factorial term leads to a better estimate of the optimal choice for $\theta_j$.

\begin{figure}[ht]
    \centering
    \begin{subfigure}[b]{0.48\textwidth}
        \centering
        \includegraphics[width=\textwidth]{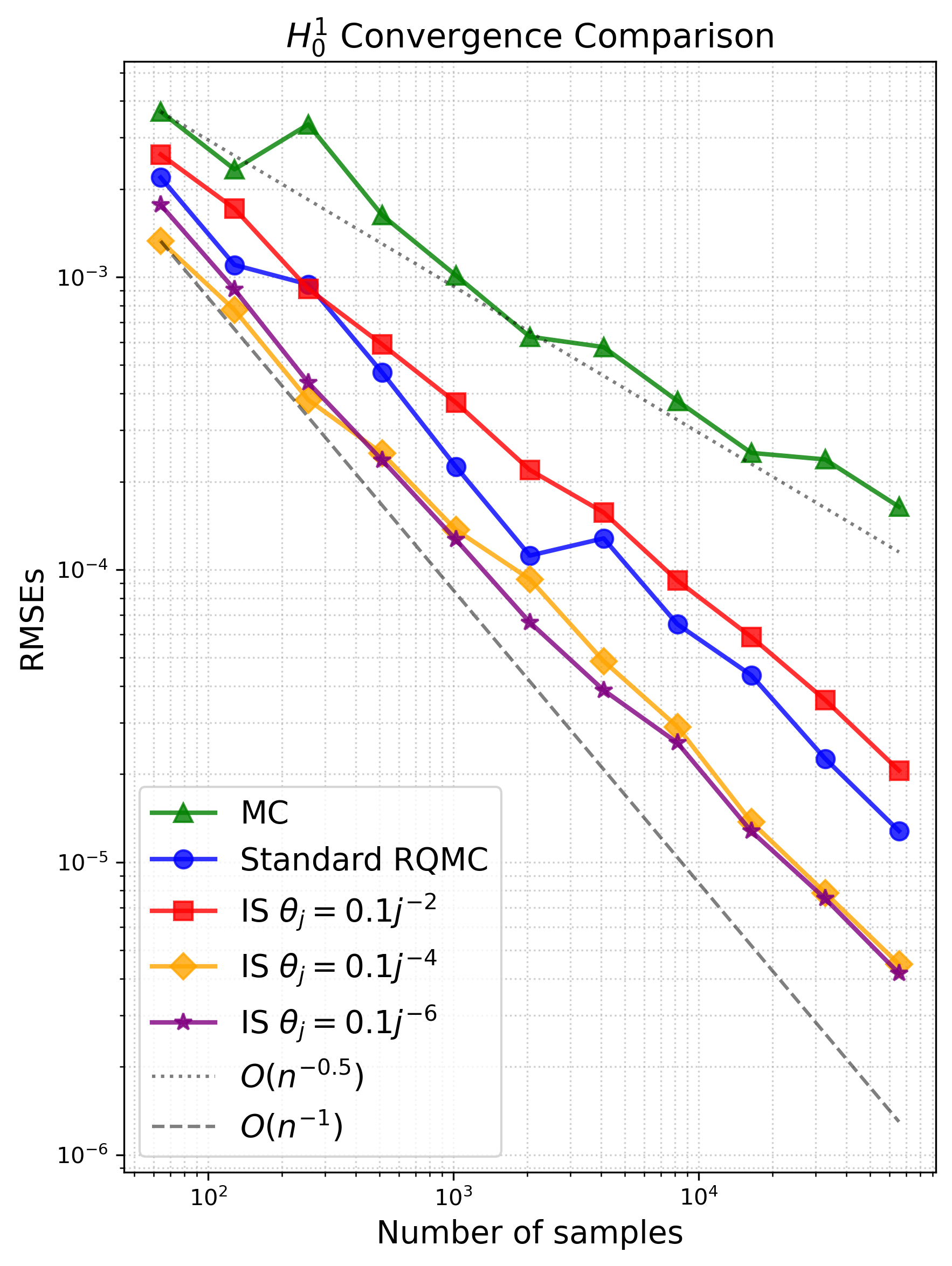} 
        \caption{$H_0^1(D)$ Error}
    \end{subfigure}
    \hfill 
    \begin{subfigure}[b]{0.48\textwidth}
        \centering
        \includegraphics[width=\textwidth]{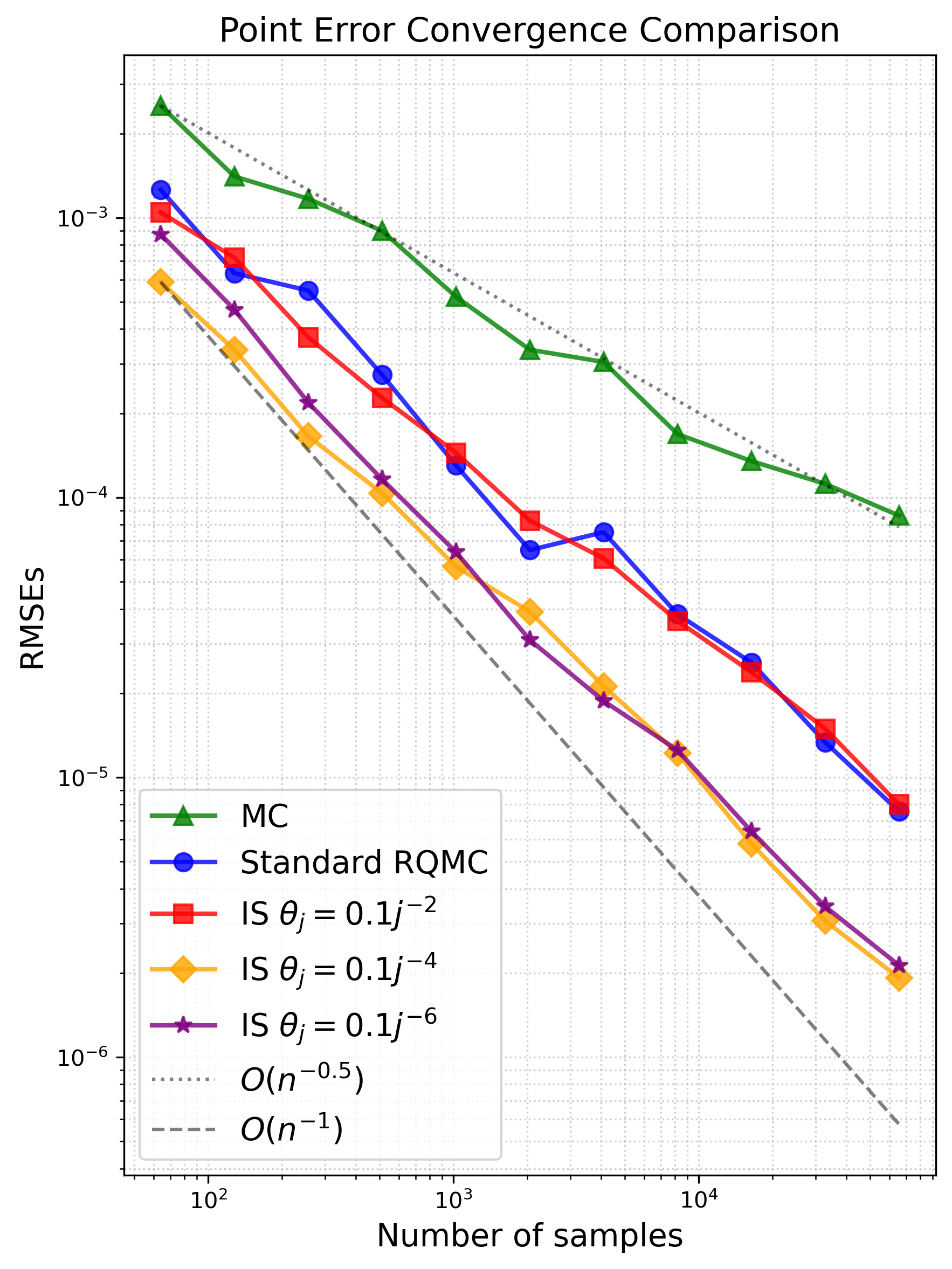} 
        \caption{Pointwise Error at $\boldsymbol{x}_c$}
    \end{subfigure}

    \caption{Convergence results of the RMSEs for  $s=128,\ q\in (1,2),\ \beta=\tau=1,\ \zeta = 1/2,\ \rho^*=2$. The plots compare the proposed BDIS method ($\theta_{j}=0.1j^{-2}$, $0.1j^{-4}$ or $0.1j^{-6}$) against standard RQMC and MC methods. The dashed reference lines indicate $O(n^{-0.5})$ and $O(n^{-1})$ convergence, respectively.}
    \label{fig:convergence_results2}
\end{figure}

Finally, to examine the regime of $\beta>\tau=1$, we consider lognormal random fields, for which $\beta=2$ and $q$ can be arbitrarily large.  We set $\zeta=2/3$ and $\rho^*=5/2$, yielding an easy test case for QMC integration. Corollary~\ref{cor:pod} suggests using  BDIS with $\theta_j=0$, which reduces to the standard RQMC method since $\rho=3/2$. This yields a dimension-independent RMSE rate of almost $O(n^{-1})$, as confirmed in Figure~\ref{fig:convergence_results3}. For comparison, we also test BDIS with three decay rates for $\theta_j$:  $\theta_j = 0.1 j^{-2},\ \theta_j =0.1j^{-4}$, and $\theta_j = 0.1j^{-6}$. Figure~\ref{fig:convergence_results3} shows that BDIS achieves a faster error rate when $\theta_j$ decays more rapidly. These results confirm the optimality of the standard RQMC in this setting.

% When $\rho^*=2.1$, Corollary~\ref{cor:pod} suggests to use $\theta_j = \theta_0 j^{-2.75}$ since $\rho=1.1\in (1,3/2)$, predicting a dimension-independent RMSE rate of almost $O(n^{-0.6})$. When $\rho^*=2.5$, 

\begin{figure}[ht]
    \centering
    \begin{subfigure}[b]{0.48\textwidth}
        \centering
        \includegraphics[width=\textwidth]{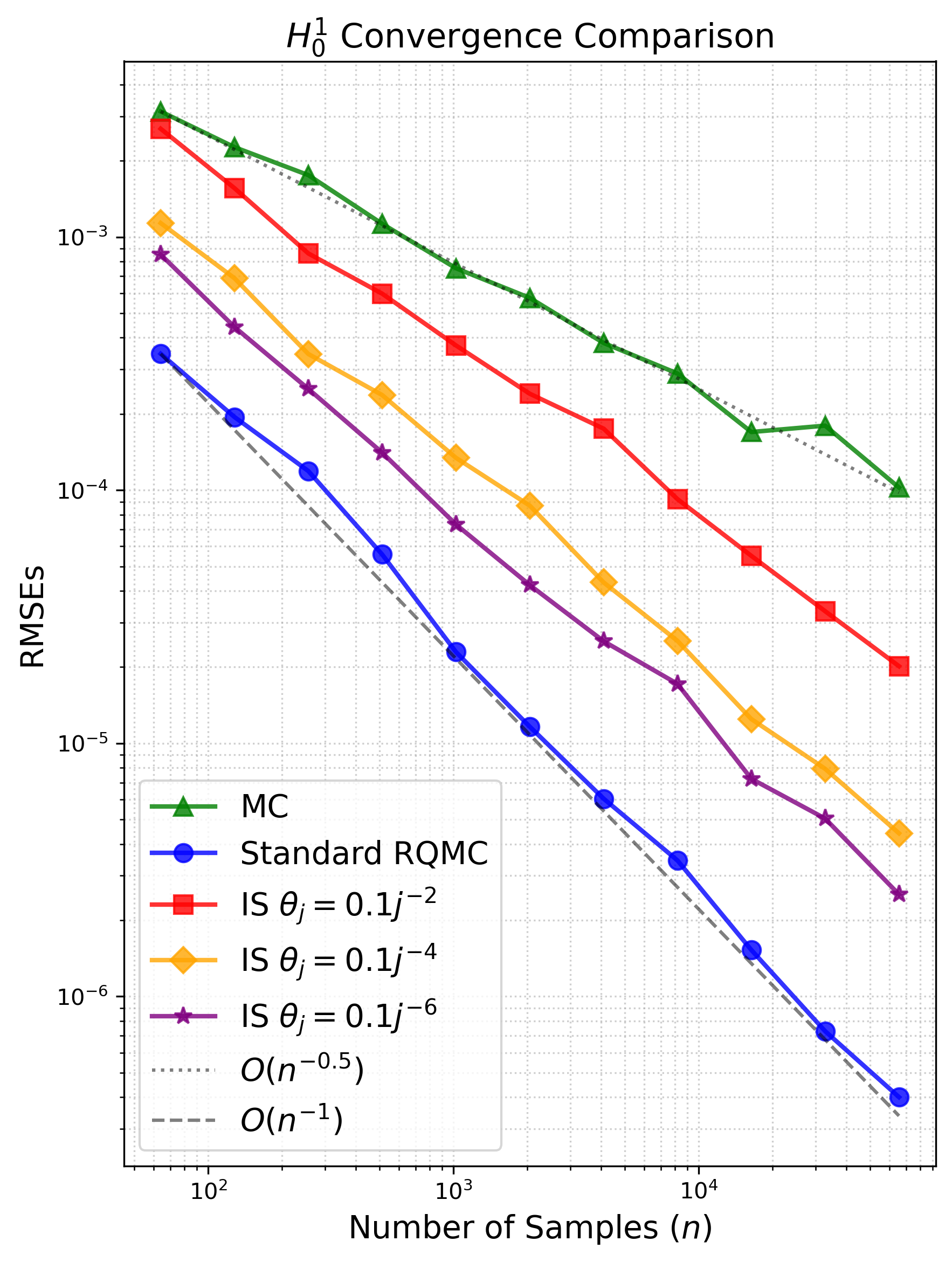} 
        \caption{$H_0^1(D)$ Error}
    \end{subfigure}
    \hfill 
    \begin{subfigure}[b]{0.48\textwidth}
        \centering
        \includegraphics[width=\textwidth]{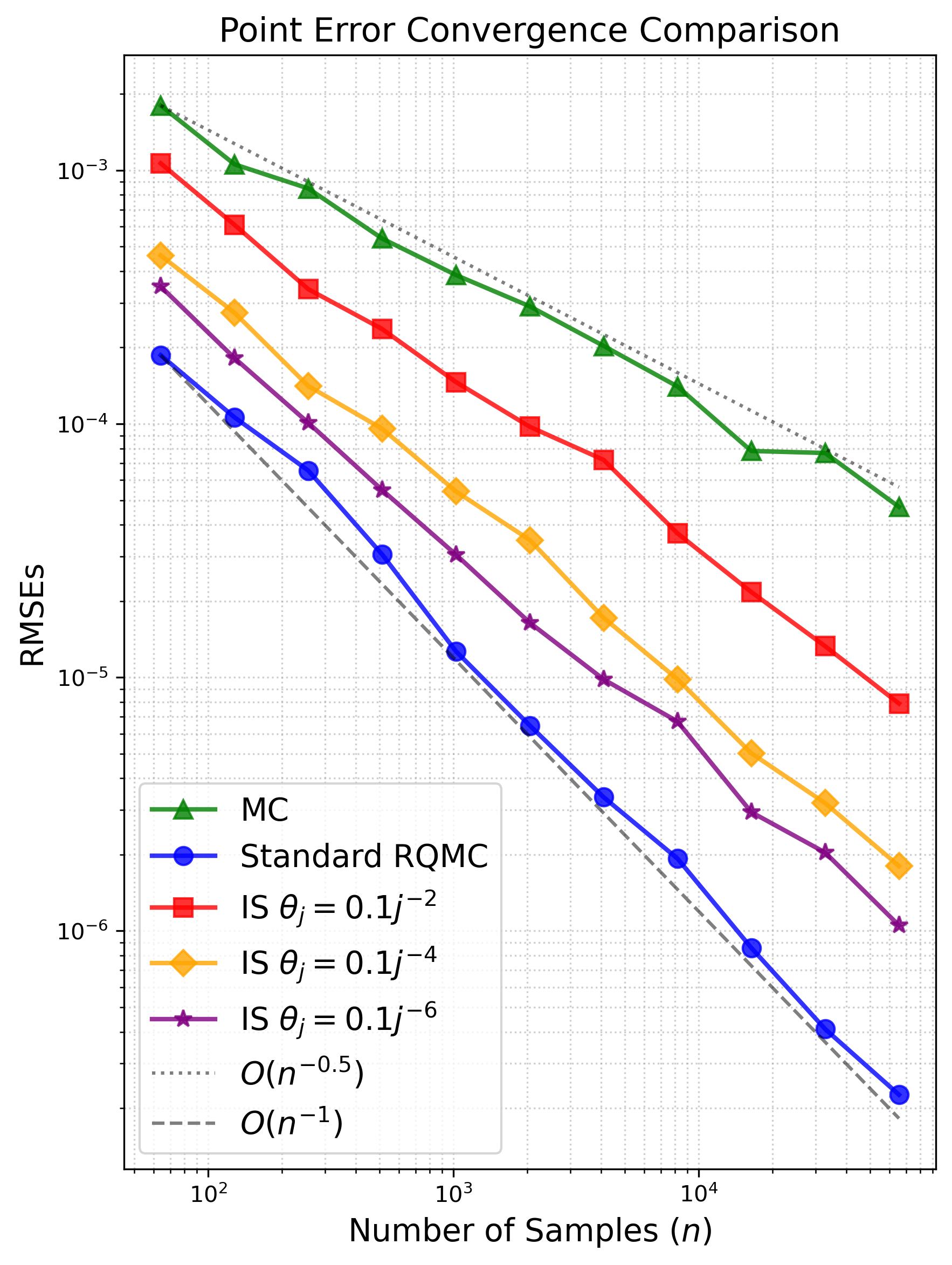} 
        \caption{Pointwise Error at $\boldsymbol{x}_c$}
    \end{subfigure}

    \caption{Convergence results of the RMSEs for  $s=128,\ q>1,\ \beta=2,\ \tau=1,\ \zeta = 2/3,\ \rho^*=5/2$. The plots compare the proposed BDIS method ($\theta_{j}=0.1j^{-2}$, $0.1j^{-4}$ or $0.1j^{-6}$) against standard RQMC and MC methods. The dashed reference lines indicate $O(n^{-0.5})$ and $O(n^{-1})$ convergence, respectively.}
    \label{fig:convergence_results3}
\end{figure}

\section{Concluding remarks}\label{sec:conclusion}

As a follow-up study of BDIS, this paper derives an improved error bound for scrambled digital nets and establishes a dimension-independent error rate for UQ problems in elliptic PDEs with random diffusion coefficients. The analysis is further extended to weighted function spaces, thereby bridging the proposed method with existing literature on constructive QMC approaches.

A natural extension of this work is to investigate whether BDIS can be combined with constructive methods such as lattice rules. Theorem~\ref{thm:Wqcase} provides an estimate of the $W^{1,1}_{\mix}(\itr^s)$-norm of the modified integrand $f^w$ in terms of the $W^{1,q}_{\mix}(\real^s,\varphi)$-norm of the original integrand $f$, which itself is bounded as in Theorem~\ref{thm:PODweight}. These estimates could be used to design suitable weights for constructing lattice rules. An interesting research direction is to explore whether jointly optimizing the transport maps $T_j$ and the lattice rules can lead to improved convergence and tractability.

Our analysis builds on first-order mixed derivatives of the integrand. A natural question is whether higher-order QMC rules \cite{dick:2008} can be combined with BDIS by exploiting higher-order derivatives. A new transport map is required to eliminate the discontinuities in the second derivative of $T_j$ defined by \eqref{eqn:Tj} and \eqref{eqn:wtheta}. We anticipate that a suitable construction would achieve convergence rates matching those of Möbius-transformed QMC rules \cite{kazashi2025optimalityquasimontecarlomethods,suzuki2025mobius}. We leave this question for future research.

\appendix
\section{Proof of Lemma~\ref{lem:Sobolevimbedding}}\label{appA}
\begin{proof}
    We first establish the result for the univariate case ($s=1$). For any $f \in C^1(\itr)$ and any $x, y \in \itr$, we have
    \begin{equation*}
        f(x) = f(y) + \int_y^x f'(t) \rd t.
    \end{equation*}
    Integrating both sides with respect to $y$ over $\itr$ gives
    \begin{equation*}
        f(x) = \int_0^1 f(y) \rd y + \int_0^1 \int_y^x f'(t) \rd t \rd y.
    \end{equation*}
    Taking the absolute value yields
    \begin{align*}
        |f(x)| &\leq \int_0^1 |f(y)| \rd y + \int_0^1 \left| \int_y^x f'(t) \rd t \right| \rd y \\
        &\leq \Vert f \Vert_{L^1(\itr)} + \int_0^1 \int_0^1 |f'(t)| \rd t \rd y \\
        &= \Vert f \Vert_{L^1(\itr)} + \Vert f' \Vert_{L^1(\itr)} = \Vert f \Vert_{W^{1,1}_{\mix}(\itr)}.
    \end{align*}
    Taking the supremum over $x \in \itr$ gives the desired inequality $\Vert f \Vert_{L^\infty(\itr)} \leq \Vert f \Vert_{W^{1,1}_{\mix}(\itr)}$.
    
    For the multivariate case ($s > 1$), we prove it by induction. Assume the inequality holds for all dimensions less than $s$. Let $\bsu = (\bsu', u_s) \in \itr^s$, where $\bsu' \in \itr^{s-1}$ and $u_s \in [0,1]$. Applying the univariate inequality with respect to the variable $u_s$ while holding $\bsu'$ fixed yields
    \begin{equation*}
        |f(\bsu', u_s)| \leq \int_0^1 |f(\bsu', t)| \rd t + \int_0^1 |\partial_s f(\bsu', t)| \rd t.
    \end{equation*}
    Taking the supremum over $\bsu = (\bsu', u_s)\in \itr^{s}$, we have
    \begin{equation} \label{eq:inductive_split}
        \sup_{\bsu \in \itr^s} |f(\bsu)| \leq \sup_{\bsu' \in \itr^{s-1}} \int_0^1 |f(\bsu', t)| \rd t + \sup_{\bsu' \in \itr^{s-1}} \int_0^1 |\partial_s f(\bsu', t)| \rd t.
    \end{equation}
    By the inductive hypothesis,
    \begin{equation*}
        \sup_{\bsu' \in \itr^{s-1}} |f(\bsu',t)| \leq \Vert f(\cdot,t) \Vert_{W^{1,1}_{\mix}(\itr^{s-1})}, \quad \sup_{\bsu' \in \itr^{s-1}} |\partial_s f(\bsu',t)| \leq \Vert \partial_s f(\cdot, t) \Vert_{W^{1,1}_{\mix}(\itr^{s-1})}.
    \end{equation*}
   We split the subsets $v\subseteq{1{:}s}$ into two groups: those not containing $s$, corresponding to $f(\cdot,t)$, and those containing $s$, corresponding to $\partial_s f(\cdot,t)$. Then
    \begin{align*}
        \int_0^1\Vert f(\cdot,t) \Vert_{W^{1,1}_{\mix}(\itr^{s-1})}\rd t &= \int_0^1\sum_{v \subseteq \{1{:}s-1\}} \int_{\itr^{s-1}} |\partial^v_{\bsu'} f(\bsu',t)| \rd \bsu' \rd t \\
        &= \sum_{v \subseteq \{1{:}s-1\}} \int_{\itr^{s}} |\partial^v f(\bsu)| \rd \bsu,
    \end{align*}
    and similarly,
    \begin{align*}
        \int_0^1\Vert \partial_s f(\cdot,t) \Vert_{W^{1,1}_{\mix}(\itr^{s-1})} \rd t &= \int_0^1\sum_{v \subseteq \{1{:}s-1\}} \int_{\itr^{s-1}} |\partial^v_{\bsu'} \partial_s f(\bsu',t)| \rd \bsu' \rd t \\
        &= \sum_{v \subseteq \{1{:}s-1\}} \int_{\itr^{s}}  |\partial^v (\partial_s f)(\bsu)|  \rd \bsu.
    \end{align*}
    Substituting these expressions into \eqref{eq:inductive_split} shows that the sum of the two terms covers all subsets $v\subseteq{1{:}s}$:
    \begin{align*}
        \Vert f \Vert_{L^\infty(\itr^s)} \leq & \sum_{\substack{v \subseteq \{1{:}s\} \\ s \notin v}} \Vert \partial^v f \Vert_{L^1(\itr^s)} + \sum_{\substack{v \subseteq \{1{:}s\} \\ s \in v}} \Vert \partial^v f \Vert_{L^1(\itr^s)} = \|f\|_{W^{1,1}_{\mathrm{mix}}(\itr^s)}.      
    \end{align*}
\end{proof}

\section{Proof of Lemma~\ref{lem:Poincareinequality}}\label{appB}
\begin{proof}
    First note that for $f \in W^{1,q}(\real,\varphi)$, 
    $$\| f - \mu(f)\|_{L^q(\real,\varphi)} \le \| f - f(0)\|_{L^q(\real,\varphi)} + \|  \mu(f) - f(0)\|_{L^q(\real,\varphi)} \le 2\| f - f(0)\|_{L^q(\real,\varphi)},$$
    where in the second inequality, we used the fact that
    \begin{align*} \|  \mu(f) - f(0)\|_{L^q(\real,\varphi)}  = & \left| \int_{\mathbb{R}} (f(x) - f(0)) \varphi(x)\rd x\right|\\
    \le & \left( \int_{\mathbb{R}}|f(x) - f(0)|^{q} \varphi(x)\rd x\right)^{1/q} 
    = \| f - f(0)\|_{L^q(\real,\varphi)}.
    \end{align*}
    Consequently, to prove the Poincaré  inequality~\eqref{eqn:Poincare}, it suffices to prove the following weighted Hardy inequality:
    \begin{equation*}
        \int_\real \left| \int_0^x f'(t) \rd t \right|^q \varphi(x) \rd x \leq C_q \int_\real |f'(x)|^q \varphi(x) \rd x,
    \end{equation*}
    where $C_q>0$ is a constant independent of $f(x)$.

    Denote $g = f'$. Due to the symmetry of $\varphi$,  it suffices to prove
    \begin{equation*}
        \int_{-\infty}^0 \left| \int_x^0 g(t) \rd t \right|^q \varphi(x) \rd x \leq C \int_{-\infty}^0 |g(x)|^q \varphi(x) \rd x.
    \end{equation*}
    According to \cite{muckenhoupt1972hardy}, this inequality holds if and only if
    \begin{equation} \label{eq:Muckenhoupt_cond}
        B^- := \sup_{x < 0} \left( \int_{-\infty}^x \varphi(t) \rd t \right)^{\frac{1}{q}} \left( \int_x^0 \varphi(t)^{-\frac{1}{q-1}} \rd t \right)^{\frac{q-1}{q}} < \infty.
    \end{equation} 
Because the Mills's ratio $m(x)$ equals $\Phi(-x)/\varphi(-x)$ for $x \geq 0$, our assumption implies there exists a constant $C>0$ satisfying $\varphi(t)^{-1} \leq C \Phi(t)^{-1}$ for all $t \leq 0$. Consequently,
    \begin{align*}
       \int_x^0 \varphi(t)^{-\frac{1}{q-1}} \rd t= \int_x^0 \varphi(t)^{-\frac{q}{q-1}} \varphi(t) \rd t &\le C^{\frac{q}{q-1}}\int_x^0 \Phi(t)^{-\frac{q}{q-1}} \varphi(t) \rd t \\
        &= C^{\frac{q}{q-1}}\int_x^0 \Phi(t)^{-\frac{q}{q-1}}  \rd \Phi(t)\\
        & = C^{\frac{q}{q-1}}(q-1) \left[ \Phi(x)^{-\frac{1}{q-1}} - \Phi(0)^{-\frac{1}{q-1}} \right].
    \end{align*}
    Substituting this bound into \eqref{eq:Muckenhoupt_cond}, we obtain
    \begin{align*}
        B^- 
        &\leq \sup_{x < 0} \Phi(x)^{\frac{1}{q}} \cdot \left( C^{\frac{q}{q-1}}(q-1) \Phi(x)^{-\frac{1}{q-1}} \right)^{\frac{q-1}{q}}= C(q-1)^{\frac{q-1}{q}} < \infty.
    \end{align*}
    This proves the desired result.
\end{proof}

\section{Proof of Lemma~\ref{lem:interchange}}\label{appC}
We prove the statement when $v$ is a singleton. The full statement follows by induction. Under our assumptions, the measure $\varphi(x)\rd x$ is equivalent to the Lebesgue measure over $\real$. We first consider the case $v\subsetneq w^c$ and write $f(\bsx)$ as $ f(x_{v},\bsx_{w},\bsx_{w^c\setminus v})$. For $h\neq 0$ and almost every $\bsx_{w^c\setminus v}\in \real^{s-|w|-1}$, 
    \begin{align*}
       &\frac{1}{h}\int_{\real^{|w|}} \left(f(x_{v}+h,\bsx_{w},\bsx_{w^c\setminus v})-f(x_v,\bsx_{w},\bsx_{w^c\setminus v})\right) \prod_{j\in w}\varphi(x_j)\rd \bsx_{w}\\
       =&\frac{1}{h} \int_{\real^{|w|}} \left(\int_{x_v}^{x_v+h} \partial_{v} f(t,\bsx_{w},\bsx_{w^c\setminus v})\rd t \right)\prod_{j\in w}\varphi(x_j)\rd \bsx_{w}
       =\frac{1}{h}\int_{x_v}^{x_v+h}g(t,\bsx_{w^c\setminus v}) \rd t,
    \end{align*}
   where  
    $$g(t,\bsx_{w^c\setminus v})=\int_{\real^{|w|}}  \partial_{v} f(t,\bsx_{w},\bsx_{w^c\setminus v})\prod_{j\in w}\varphi(x_j)\rd \bsx_{w}.$$
    Here, we can apply Fubini's theorem to change the order of integration because $\partial_{v} f(t,\bsx_{w},\bsx_{w^c\setminus v})\in L^{1}(\real^s,\varphi)$ and
    \begin{align*}
     &\int_{\real^{|w|}} \int_{x_v}^{x_v+h} |\partial_{v} f(t,\bsx_{w},\bsx_{w^c\setminus v})| \prod_{j\in w}\varphi(x_j)\rd t\rd \bsx_{w}\\
     \leq &
     \frac{1}{\min_{t\in [x_v,x_v+h]}\varphi(t)}\int_{\real^{|w|}} \int_{x_v}^{x_v+h} |\partial_{v} f(t,\bsx_{w},\bsx_{w^c\setminus v})|\varphi(t)\prod_{j\in w}\varphi(x_j)\rd t\rd \bsx_{w}<\infty
    \end{align*}
    for every $\bsx_{w^c\setminus v}\in \real^{s-|w|-1}$ except for a measure zero set $A\subseteq \real^{s-|w|-1}$.
The above argument also shows $g(t,\bsx_{w^c\setminus v})$ is locally integrable in $t$ for every $\bsx_{w^c\setminus v}\in \real^{s-|w|-1}\setminus A$. Hence, we can apply Lebesgue differentiation theorem and conclude when $\bsx_{w^c\setminus v}\in \real^{s-|w|-1}\setminus A$,
\begin{equation}\label{eqn:Lebgdiff}
  \lim_{h\to 0}\frac{1}{h}\int_{x_v}^{x_v+h}g(t,\bsx_{w^c\setminus v}) \rd t=g(x_v,\bsx_{w^c\setminus v})  
\end{equation}
    for every $x_v\in \real$
    except for a measure zero set $B_{\bsx_{w^c\setminus v}}\subseteq \real$ that may depend on $\bsx_{w^c\setminus v}$. As shown in \cite[Chapter 4.1]{nikol2012approximation}, the union $\mathcal{B}=\bigcup_{\bsx_{w^c\setminus v}\in \real^{s-|w|-1}\setminus A} B_{\bsx_{w^c\setminus v}}\times \{\bsx_{w^c\setminus v}\}$ is measurable and has measure zero  with respect to the Lebesgue measure over $\real^{s-|w|}$. It then follows that
    \begin{equation*}
  \partial_v\int_{\real^{|w|}} f(x_v,\bsx_{w},\bsx_{w^c\setminus v}) \prod_{j\in w}\varphi(x_j)\rd \bsx_{w}=\int_{\real^{|w|}}  \partial_{v} f(t,\bsx_{w},\bsx_{w^c\setminus v})\prod_{j\in w}\varphi(x_j)\rd \bsx_{w}      
    \end{equation*}
 for every $\bsx_{w^c}=(x_v,\bsx_{w^c\setminus v})\in \real^{s-|w|}$ except for the measure zero set $(\real\times A) \bigcup \mathcal{B}$. 
 
 The case $v=w^c$ follows directly from \eqref{eqn:Lebgdiff}, which holds for almost every $x_v\in \real$.
%%\section*{Acknowledgments}

\end{document}